\documentclass{article}

\usepackage[top=2.5cm, bottom=2.5cm, left=3cm, right=3cm]{geometry}   
\usepackage{amssymb}
\usepackage{amsthm}
\usepackage{amsmath}
\usepackage{graphicx}
\usepackage{graphics}
\usepackage{amssymb}
\usepackage{multirow} 
\usepackage{lipsum,multicol}
\usepackage{amsmath}
\usepackage{tikz,mathpazo}
\usepackage{subfigure}
\usepackage{epsfig}
\usepackage{epstopdf}
\usepackage[numbers]{natbib} 
\usepackage{url,comment}
\usepackage{booktabs}

%

\newcommand{\inner}[1]{\left\langle #1 \right\rangle}
\newcommand{\norm}[1]{\left\Vert #1\right\Vert}
\newcommand{\bb}[1]{\mathbb{#1}}


\newcommand{\ca}[1]{\mathcal{#1}}
\newcommand{\tr}[0]{\mathrm{tr}}

\newcommand{\tX}[0]{\tilde{X}}

\newcommand{\Diag}[0]{\mathrm{Diag}}

\newcommand{\ff}{_{\mathrm{F}}}
\newcommand{\fs}{^2_{\mathrm{F}}}
\newcommand{\tp}{^\top}

\newcommand{\Apen}[1]{\left( \frac{3}{2} I_p - \frac{1}{2} {#1}\tp {#1} \right)}

\newcommand{\Xk}{{X_{k} }}

\newcommand{\Xkp}{{X_{k+1} }}

\newcommand{\PT}[0]{{J}_{X}}

\newcommand{\BOM}{\overline{\Omega}}
\newcommand{\BOMs}{\overline{\Omega}_{1/6}}
\newcommand{\BOMt}{\overline{\Omega}_{1/12}}
\newcommand{\BOMtiny}{\overline{\Omega}_{1/24}}

\newcommand{\grad}{{\mathit{grad}\,}}
\newcommand{\hess}{{\mathit{hess}\,}}

\newcommand{\ProjS}{{\ca{P}_{\ca{S}_{n,p}}}}

\newcommand{\Expen}{{ExPen}}

\newcommand{\Expens}{{ExPen}\,}

\newtheorem{theo}{Theorem}[section]
\newtheorem{lem}[theo]{Lemma}
\newtheorem{prop}[theo]{Proposition}

\newtheorem{coro}[theo]{Corollary}

\newtheorem{defin}[theo]{Definition}
\newtheorem{rmk}[theo]{Remark}
\newtheorem{assumpt}[theo]{Assumption}

\usepackage{caption}
\usepackage{algorithm}
\usepackage{algpseudocode}
\usepackage{longtable}
\usepackage{appendix}

\numberwithin{equation}{section}


\markboth{N.~XIAO AND X.~LIU}{Solving Optimization Problems over the Stiefel Manifold by Smooth Exact Penalty Functions}

\title{Solving Optimization Problems over the Stiefel Manifold by Smooth Exact Penalty Functions}

\author{
	{
		Nachuan Xiao\thanks{ The Institute of Operations Research and Analytics, National University of Singapore, Singapore. (email: xnc@lsec.cc.ac.cn).}~
		~Xin Liu\thanks{
			State Key Laboratory of Scientific and Engineering Computing, Academy of Mathematics and Systems Science, Chinese Academy of Sciences, and University of Chinese Academy of Sciences, China (email: liuxin@lsec.cc.ac.cn). Research is supported in part by the National Natural Science Foundation of 
			China (No. 12125108, 11971466, 11991021), Key Research Program of Frontier 
			Sciences, Chinese Academy of Sciences (No. ZDBS-LY-7022)}
	}
}

\begin{document}
	\maketitle
	
	\begin{abstract}
		In this paper, we present a novel penalty model called \Expens for optimization over the Stiefel manifold. Different from existing penalty functions for orthogonality constraints, \Expens adopts
		a smooth penalty function without using any first-order derivative of the objective function. We show that all the first-order stationary points of \Expens with a sufficiently large penalty parameter are either feasible, namely, are the first-order stationary points of the original optimization problem, or far from the Stiefel manifold. Besides, the original problem and \Expens share the same second-order stationary points. Remarkably, the exact gradient and Hessian of \Expens are easy to compute. As a consequence, abundant algorithm resources in unconstrained optimization can be applied straightforwardly to solve \Expens.
		\newline
		{\bf Keywords: 
				orthogonality constraint, Stiefel manifold, penalty function
		}
	\end{abstract}

	\section{Introduction}
In this paper, we consider the following optimization problem
\begin{equation}
	\label{Prob_Ori}
	\tag{OCP}
	\begin{aligned}
		\min_{X \in \bb{R}^{n\times p}} \quad &f(X)\\
		\text{s.t.} \quad & X\tp X = I_p,
	\end{aligned}
\end{equation}
where $I_p$ denotes the $p\times p$ identity matrix, and $f: \bb{R}^{n\times p} \mapsto \bb{R}$ satisfies the
following assumption throughout this paper.
\begin{assumpt}{\bf Blank assumptions on $f$}
	\label{Assumption_local_f}
	
	\begin{enumerate}
		\item $f$ and $\nabla f$ are locally Lipschitz continuous in $\bb{R}^{n\times p}$ \footnote{A mapping $T: \bb{R}^{n\times p} \to \bb{R}^{m}$ is locally Lipschitz continuous over $\bb{R}^{n\times p}$ if for any $X_0 \in \bb{R}^{n\times p}$, there exists a constant $M$ and $\delta > 0$ such that for any $X \in \bb{R}^{n\times p}$ satisfying $\norm{X - X_0}\ff \leq \delta$, it holds that $\norm{T(X) - T(X_0)} \leq M\norm{X - X_0}\ff$.}.
	\end{enumerate}
\end{assumpt}

The feasible region of the orthogonality constraints $X\tp X = I_p$ is the Stiefel manifold embedded in the $n\times p$ real matrix space, denoted by $ \ca{S}_{n,p} := \{X \in \bb{R}^{n\times p}| X\tp  X = I_p \}$. We also call it as the Stiefel manifold for brevity. Optimization problems with orthogonality constraints have wide applications in statistics  \cite{pearson1901liii,fischler1981random}, scientific computation \cite{Kohn1965Self,liu2015analysis}, image processing \cite{bansal2018can} and many other related areas \cite{han2015unsupervised,li2018generalized,zhang2018feature}. Interested readers could refer to some recent works \cite{gao2019parallelizable,huang2019riemannian,xiao2021penalty,wang2020distributed}, a recent survey \cite{hu2020brief}, and several books \cite{Absil2009optimization,boumal2020introduction} for details.

\subsection{Motivation}

Optimization over the Stiefel manifold, which is a smooth and compact Riemannian manifold, has been discovered to enjoy a close relationship with unconstrained optimization. However, developing optimization approaches over the Stiefel manifold is inherently complicated by the nonconvexity of the manifold.
Various existing {\it unconstrained optimization approaches}, i.e. the approaches for solving nonconvex unconstrained optimization problems, can be extended to their Riemannian versions by the local diffeomorphisms between the Stiefel manifold and Euclidean space.
The approaches, called {\it Riemannian optimization approaches} for brevity hereinafter, include gradient descent with line-search \cite{abrudan2008steepest,Absil2009optimization,wen2013feasible,jiang2015framework,wang2020multipliers}, conjugate gradient methods \cite{abrudan2009conjugate}, Riemannian accelerated gradient method \cite{zhang2018r,zhang2018towards,siegel2019accelerated,criscitiello2020accelerated}, Riemannian adaptive gradient methods \cite{becigneul2018riemannian}, etc. 
With the frameworks and geometrical materials described in \cite{Absil2009optimization}, theoretical results of these Riemannian optimization approaches have been established by following almost the same proof techniques as their unconstrained prototypes. These results include the global convergence, local convergence rate, worst-case complexity, saddle-point-escaping properties, etc., see \cite{criscitiello2019efficiently,sun2019escaping,zhou2019faster,ahn2020nesterov,criscitiello2020accelerated,han2020escape,hou2020analysis} for instances. 

The Riemannian optimization approaches usually consist of two fundamental parts.
The first one is the so-called ``retraction" which maps a point from the tangent space to the manifold. Retractions can be further categorized into two classes: the geodesic-like retractions
and the projection-like ones. The former ones require to calculate the geodesics along the manifold and hence are expensive. The latter ones enjoy relatively lower computational cost, 
but as demonstrated in various existing works \cite{gao2019parallelizable,xiao2020class,xiao2021penalty}, computing the projection-like retractions is still more expensive than matrix-matrix multiplication.
The second part is called ``parallel transport" which
moves a tangent vector along a given curve on a Stiefel manifold ``parallelly".
The purpose of parallel transport is to design the manifold version of some advanced unconstrained optimization approaches, such as conjugate gradient methods or gradient methods with momentum.
However, as illustrated in \cite{Absil2009optimization}, computing the parallel transport on Stiefel manifold is equivalent to finding a solution to a differential equation, which is definitely impractical in computation. 
To this end, the authors of \cite{Absil2009optimization} have proposed the concept of vector transport, which can be regarded as an approximation to parallel transport, hence is computationally affordable. Unfortunately, due to the approximation error introduced by vector transports, analyzing the convergence properties of Riemannian optimization algorithms is challenging and usually cannot directly follow the their unconstrained origins, see \cite{huang2019riemannian} for instances.  As illustrated in various existing works \cite{zhang2018r,zhang2018towards,criscitiello2019efficiently,zhou2019faster,ahn2020nesterov,criscitiello2020accelerated}, both parallel transports and geodesics play an essential role in establishing convergence properties. It is still difficult to verify whether their theoretical convergence properties is valid when these approaches are built by retractions and vector transports. 

To avoid computing the retractions, parallel transports, or vector transports to the Stiefel manifold, some approaches
aim to find  smooth mappings from the Euclidean space to the Stiefel manifold, 
which directly reformulates \ref{Prob_Ori} to unconstrained optimization. Among them, \cite{lezcano2019cheap,lezcano2019trivializations} construct equivalent unconstrained problems for \ref{Prob_Ori} by exponential function for square matrices. To efficiently compute the matrix exponential, they apply the iterative approach proposed by \cite{higham1994APA}. Therefore, their approaches require $\ca{O}(n^3)$ flops in each iteration for computing the matrix exponential and thus are computationally expensive in practice.  
Inspired by \cite{lezcano2019cheap,lezcano2019trivializations}, several recent works use Cayley transformation \cite{helfrich2018orthogonal,maduranga2019complex,figueroa2020transportless,li2020efficient} to avoid computing the matrices exponential. 
These approaches require computing the inverse of $n\times n$ matrices in each iterate, which still requires $\ca{O}(n^3)$ flops in general. Furthermore, calculating the derivatives through the exponential mapping or the Cayley transformation
	can be more costly. 
	In particular, as illustrated in the numerical experiments in \cite{figueroa2020transportless,ablin2022fast}, when applying nonlinear conjugate gradient methods, the computational time of these approaches is usually much higher than existing Riemannian conjugate gradient approaches.

Recently, some infeasible approaches
have been verified to be efficient in solving optimization problems
over the Stiefel manifold. They 
utilize a completely different angle with existing Riemannian optimization approaches.
Based on the framework of the augmented Lagrangian method (ALM) 
\cite{hestenes1969multiplier,powell1969method,nocedal2006numerical,bertsekas2014constrained}, the authors of 
\cite{gao2019parallelizable} have proposed the proximal linearized augmented Lagrangian method
(PLAM) and its column-wise normalization version (PCAL) for \eqref{Prob_Ori}. 
Both PLAM and PCAL update the multipliers corresponding to the orthogonality constraints by a closed-form expression. Additionally, \cite{ablin2022fast} have proposed the landing algorithm, which follows a two-step alternative updating framework. 
%
Inspired by the closed-form updating scheme in PLAM and PCAL, the authors of \cite{xiao2020class} have proposed an exact penalty function named PenC,
\begin{equation*}
	\min_{ \norm{X}\ff \leq K} \quad h_{PenC}(X) = f(X) - \frac{1}{2} \inner{\Phi(X\tp \nabla f(X)), X\tp X - I_p} + \frac{\beta}{4} \norm{X\tp X - I_p}\fs,
\end{equation*} 
where $K \geq \sqrt{p}$ is a prefixed constant and $\Phi$ is the symmetrization operator defined as  
$$\Phi(M) := \frac{1}{2}(M + M\tp). $$ 
In \cite{xiao2020class}, the authors have illustrated the equivalence between \ref{Prob_Ori} and PenC, which further proposed the corresponding infeasible first-order and second-order methods PenCF and PenCS, respectively. 
Moreover, successive works \cite{xiao2020l21,hu2020anefficiency} have illustrated that PenC could be extended to objective function with special structures. 
The above-mentioned penalty-function-based approaches are verified to enjoy high efficiency and scalability due to avoiding retractions or parallel transport to the Stiefel manifold. However, their penalty functions involve the first-order derivatives of the original objective, which leads to two limitations. Firstly, the smoothness of
the penalty function requires higher-order smoothness of the original objective function. Secondly, calculating an exact gradient of these penalty functions is usually expensive in practice. As a result, many existing unconstrained optimization approaches cannot be directly applied to minimize these penalty functions.

\subsection{Contributions}
The contributions of this paper can be summarized as the following
two folds.

\paragraph{A novel penalty function}
We propose a novel penalty function
\begin{equation}
	\label{Penalty_function}
	h(X):= f\left( X\Apen{X}\right) + \frac{\beta}{4} \norm{X\tp X - I_p}\fs,
\end{equation}
and construct the following unconstrained optimization problem which is abbreviated as ExPen.
\begin{equation}
	\label{Prob_Pen}
	\tag{ExPen}
	\min_{X \in \bb{R}^{n\times p}}\quad h(X).
\end{equation}
With a sufficiently large penalty parameter $\beta$, we illustrate that any first-order stationary point (FOSP) of \ref{Prob_Pen} is either feasible and hence a FOSP of \ref{Prob_Ori}, or far away from the Stiefel manifold. Besides, we prove that any eigenvalue of the Riemannian Hessian at any first-order stationary point $X \in \ca{S}_{n,p}$  is an eigenvalue of $\nabla^2 h(X)$. Then we show that any second-order stationary point (SOSP) of \ref{Prob_Pen} is an SOSP of  \ref{Prob_Ori}. We call the above two relationships the {\it first-order relationship} and {\it second-order relationship}, respectively, for brevity.  These two relationships imply that \Expens can be regarded as an exact penalty function.

\paragraph{A universal tool}
The exact penalty model \ref{Prob_Pen} builds up a bridge between various existing unconstrained optimization approaches
and \ref{Prob_Ori}. 
Moreover, those rich theoretical results of unconstrained optimization approaches can be directly applied in solving
\ref{Prob_Ori}. 
In particular, some newly developed techniques for unconstrained optimization can be
extended to solve optimization over the Stiefel manifold through \Expen.
We use the nonlinear conjugate gradient method as an example. 
It is difficult to find a compromise between computational efficiency and theoretical guarantee
if we adopt Riemannian optimization approaches to achieve this extension.
Preliminary numerical experiments illustrates that \ref{Prob_Pen} yields direct and efficient of nonlinear conjugate gradient solver from SciPy package.

\subsection{Notations}\label{Sec:notation}
In this paper, the Euclidean inner product of two matrices $X, Y\in \bb{R}^{n\times p}$ is defined as $ \inner{X, Y}=\tr(X\tp Y)$,
where $\tr(A)$ is the trace of the square matrix $A$. Besides, 
$\norm{\cdot}_2$ and $\norm{\cdot}\ff$ represent the $2$-norm and the Frobenius norm, respectively. 
The notations $\mathrm{diag}(A)$ and $\Diag(x)$
stand for the vector formed by the diagonal entries of matrix $A$,
and the diagonal matrix with the entries of $x\in\bb{R}^n$ to be its diagonal, respectively. 
We denote the smallest eigenvalue of $A$ by $\lambda_{\mathrm{min}}(A)$. 
  We set the Riemannian metric on Stiefel manifold as the metric inherited from the standard inner product in $\bb{R}^{n,p}$. We set
$\ca{T}_X$ as the tangent space of Stiefel manifold at $X$, which can be expressed as 
\begin{equation*}
	\ca{T}_X := \{D \in \bb{R}^{n\times p} \mid \Phi(D\tp X) = 0 \},
\end{equation*}
while $\ca{N}_X$ is denoted as the normal space of Stiefel manifold at $X$,
\begin{equation*}
	\ca{N}_X := \{D \in \bb{R}^{n\times p} \mid D = X\Lambda, \Lambda  = \Lambda\tp\}.
\end{equation*}
And $\grad f(X)$ denotes the Riemannian gradient of $f$ at $X \in \ca{S}_{n,p}$ in Riemannian metric that is inherited from the Euclidean metric, namely, 
\begin{equation*}
	\grad f(X) := \nabla f(X) - X \Phi(X\tp \nabla f(X)).
\end{equation*}
Besides, we uses $\nabla^2 f(X)[D]$ to represent the Hessian-matrix product. The Riemannian Hessian of $f$ at $X \in \ca{S}_{n,p}$ in Euclidean measure is denoted as $\hess f(X): \ca{T}_X \to \ca{T}_X$, whose bilinear form
can be written as
\begin{equation*}
	\inner{D_1, \hess f(X) [D_2] } := \inner{D_1, \nabla^2 f(X) [D_2] - D_2 \Phi(X\tp \nabla f(X))}, \quad \forall D_1, D_2 \in \ca{T}_X . 
\end{equation*}
Finally, $\ProjS(X) = UV\tp$ denotes the orthogonal projection to Stiefel manifold, where $X = U\Sigma V\tp$ is the economic SVD of $X$
with $U\in \ca{S}_{n,p}$, $V\in \ca{S}_{p,p}$ and $\Sigma$
is $p\times p$ diagonal matrix with the singular values of
$X$ on its diagonal.

\subsection{Organization}
The rest of this paper is organized as follows. In Section 2, we present several preliminaries and useful lemmas. Then we explore the first-order and second-order relationships between \ref{Prob_Ori} and \ref{Prob_Pen}, respectively, in Section 3. We show how to solve \ref{Prob_Ori} through unconstrained optimization approaches by an illustrative example in Section 4 and draw a brief conclusion in the last section.

\section{Preliminaries}
In this section, we provide several preliminary properties of \ref{Prob_Pen}. We first introduce the definitions, assumptions and define several constants. Then we give some preliminary properties of \ref{Prob_Pen}. Finally, we present the computational complexity of calculating the derivatives of \ref{Prob_Pen}. 
\subsection{Definitions}
The first-order optimality condition of problem \ref{Prob_Ori} can be written as
\begin{defin}{\cite{Absil2009optimization}}\label{Defin_FOSP}
	Given a point $X\in \ca{S}_{n,p}$, we call $X$ a first-order stationary point of \ref{Prob_Ori} if $\grad f(X) = 0$. 
\end{defin}
According to \cite{Gao2018New}, any $X \in \bb{R}^{n\times p}$ is a first-order stationary point of \ref{Prob_Ori} if and only if it satisfies 
\begin{equation*}
	\left\{\begin{aligned}
		\nabla f(X) - X\Phi(X\tp \nabla f(X)) &= 0;\\
		X\tp X &= I_p.
	\end{aligned}\right.
\end{equation*}

Next, we present the definition of the second-order optimality condition of \ref{Prob_Ori}. 
\begin{defin}
	\label{Defin_SOSP}
	Given a point $X\in \ca{S}_{n,p}$, if $f$ is twice-differentiable, $X$ is a first-order stationary point of \ref{Prob_Ori} and 
	\begin{equation*}
		\inner{D, \hess f(X)[D]} \geq 0,
	\end{equation*}
	holds for any $D \in \ca{T}_X$, then we call $X$ a second-order stationary point of \ref{Prob_Ori}. 
\end{defin}

Besides, we present the definitions of first-order and second-order optimality conditions of \ref{Prob_Pen}.  Given a point $X\in\bb{R}^{n\times p}$, we say $X$ is a first-order stationary point of a differentiable function $h: \bb{R}^{n\times p} \to \bb{R}$ if and only if $\nabla h(X) = 0$. And when $h$ is twice-order differentiable, $X$ is a second-order stationary point of $h$ if and only if $X$ is a first-order stationary point of $h$ and 
\begin{equation}
	\label{Eq_SOSP_h}
	\inner{\nabla^2 h(X)[D], D} \geq 0, \quad \forall D \in \bb{R}^{n\times p}. 
\end{equation}

Next we present the definitions of {\L}ojasiewicz  inequality  \cite{lojasiewicz1961probleme,lojasiewicz1963propriete} in the following, which coincide with the definitions in \cite{bolte2014proximal}. 
\begin{defin}
	
	Let $f$ be a differentiable function. Then $f$ is said to have the (Euclidean) {\L}ojasiewicz gradient inequality at $X \in \bb{R}^{n\times p}$ if and only if there exists a neighborhood $U$ of $X$, and constants $\theta \in (0,1]$, $C>0$, such that for any $Y \in U$,
	\begin{equation*}
		\norm{\nabla  f(Y)}\ff  \geq C |f(Y) - f(X)|^{1-\theta}.
	\end{equation*}
\end{defin}

Besides, we present the definitions of Riemannian {\L}ojasiewicz  inequality \cite{hosseini2015convergence}. 
\begin{defin}
	
	Let $f$ be a differentiable function. Then $f$ is said to have the Riemannian {\L}ojasiewicz gradient inequality at $X \in \ca{S}_{n,p}$ if and only if there exists a neighborhood $U \subset \ca{S}_{n,p}$ of $X$, and constants $\theta \in (0,1]$, $C>0$, such that for any $Y \in U$,
	\begin{equation*}
		\norm{\grad  f(Y)}\ff  \geq C |f(Y) - f(X)|^{1-\theta}.
	\end{equation*}
\end{defin}
The constant $\theta$ is usually named as {\L}ojasiewicz exponent in the gradient inequality.

\subsection{Assumptions}
In this subsection, we present some additional assumptions on the objective function $f$ in \ref{Prob_Ori}, which are usually optional throughout this paper. 
Before presenting these additional assumptions using
in some parts of this paper, we first define some set and operators.
\begin{itemize}
	\item $\Omega := \{ X \in \bb{R}^{n\times p} \mid \norm{X}_2 \leq 1+ \frac{1}{12} \}$;
	\item $\BOM_r :=\{ X \in \bb{R}^{n\times p} \mid 
	\norm{X\tp X - I_p}\ff \leq r\}$;
	\item $G(X):= \nabla f\left( Y \right) \left\vert_{Y = X\Apen{X}} \right. $;
	\item $\ca{H}(X) := \nabla^2 f(Y)\left\vert_{Y = X\Apen{X}} \right.$;
	\item $\PT(D) := D\Apen{X} - X \Phi(D\tp X)$;
	\item $g(X) := f\left(  X\Apen{X} \right)$.
\end{itemize}
Clearly, we have $\BOMt \subset \BOMs\subset \Omega$.
In addition, we present several constants for the theoretical analysis of \ref{Prob_Pen}. 
\begin{itemize}
	\item $M_0 := \sup_{X \in \Omega  } ~  f(X) - \inf_{X \in \Omega } f(X)$;
	\item $M_1 := \sup_{X \in \Omega } ~ \norm{G(X)}\ff $;
	\item $M_2 := \sup_{X, Y \in \Omega, X\neq Y  } ~\frac{\norm{\nabla g(X) - \nabla g(Y)}\ff}{\norm{X-Y}\ff} $;
	\item $\bar{\beta} := \max\{  12M_1, 6M_2 \}$.
\end{itemize}
Clearly, parameters $M_0$, $M_1$ and $M_2$ are well-defined constants
independent of the penalty parameter $\beta$.

\begin{assumpt}{\bf The global Lipschitz continuity of $f$}
	\label{Assumption_global_f}
	
	$f$ is globally Lipschitz continuous in $\bb{R}^{n\times p}$.
\end{assumpt}

Although Assumption \ref{Assumption_global_f} is restrictive, it is optional in our theoretical analysis. 
	It will be specifically mentioned where applicable.
%
Moreover, under Assumption \ref{Assumption_global_f}, we define several additional constants for \ref{Prob_Pen},
\begin{itemize}
	\item $\hat{M}_1 := \sup_{X \in \bb{R}^{n\times p} } ~\norm{G(X)}\ff$;
	\item $\hat{M}_2 := \sup_{X,Y \in \bb{R}^{n\times p}, X\neq Y } ~ \frac{\norm{\nabla g(X) - \nabla g(Y)}\ff}{\norm{X-Y}\ff} $;
	\item $\hat{\beta} := \max\{  12\hat{M}_1, 6\hat{M}_2 \}$.
\end{itemize}
We emphasize that
parameters $\hat{M}_1$ and $\hat{M}_2$  are independent with the penalty parameter $\beta$. Besides,
it follows from Assumption \ref{Assumption_global_f} that $\hat{M}_1 \geq M_1$ and $\hat{M}_2 \geq M_2$.

Furthermore, when we analyze the hessian at $h(X)$, the objective function $f$ in \ref{Prob_Ori} should be twice-differentiable. As a results, in some cases we assume the objective function $f$ be twice differentiable.
\begin{assumpt}{\bf The second-order differentiability of $f$}
	\label{Assumption_second_order}
	
	$\nabla^2 f(X)$ exists at every $X \in \bb{R}^{n\times p}$.  
\end{assumpt}

In the rest of this subsection, we present several useful lemmas for further use. We first show that $\PT$ is the Jacobian of the mapping $X\mapsto X\Apen{X}$ in the following lemma.  
\begin{prop}
	\label{Prop_diff_A}
	For any $X, Y \in \bb{R}^{n\times p}$, let $D = Y-X$, we have
	\begin{equation*}
		\norm{Y\Apen{Y} - X \Apen{X} - \PT(D) }\ff =\ca{O}(\norm{D}\fs). 
	\end{equation*}
	Besides, 
	\begin{equation*}
		\norm{Y \Apen{Y} - X \Apen{X} - \PT(D) - \left[ D \Phi(D\tp X) + \frac{1}{2} X D\tp D \right]}\ff
		= \ca{O}(\norm{D}\ff^3).
	\end{equation*}
\end{prop}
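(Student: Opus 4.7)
The plan is to perform a direct algebraic expansion of the map $\phi(X) := X\Apen{X} = \tfrac{3}{2}X - \tfrac{1}{2}X X\tp X$ about $X$ with increment $D := Y - X$, identifying the linear, quadratic, and cubic contributions in $D$ as an \emph{exact} identity; once this identity is in hand both claims reduce to elementary norm bounds.

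First I would exploit the quadratic structure of $\Apen{\cdot}$ to write the exact identity
$$\Apen{Y} - \Apen{X} = -\tfrac{1}{2}(X\tp D + D\tp X) - \tfrac{1}{2}D\tp D = -\Phi(D\tp X) - \tfrac{1}{2}D\tp D,$$
using $X\tp D + D\tp X = 2\Phi(D\tp X)$. Then writing $Y\Apen{Y} = (X+D)\bigl[\Apen{X} + (\Apen{Y} - \Apen{X})\bigr]$ and multiplying out term by term gives the exact, non-asymptotic expansion
$$Y\Apen{Y} - X\Apen{X} = \bigl[D \Apen{X} - X \Phi(D\tp X)\bigr] - \bigl[D\Phi(D\tp X) + \tfrac{1}{2}X D\tp D\bigr] - \tfrac{1}{2}D D\tp D.$$
The first bracket is precisely $\PT(D)$ (the Jacobian action), the second bracket collects all quadratic-in-$D$ terms, and the final summand is a pure cubic residual.

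For the first estimate I would bound each quadratic and cubic leftover by submultiplicativity together with the nonexpansiveness $\norm{\Phi(M)}\ff \le \norm{M}\ff$: namely $\norm{D\Phi(D\tp X)}\ff \le \norm{D}\ff^2 \norm{X}_2$, $\norm{X D\tp D}\ff \le \norm{X}_2 \norm{D}\fs$, and $\norm{D D\tp D}\ff \le \norm{D}\ff^3$, so the residual is $\ca{O}(\norm{D}\fs)$ with a constant depending only on $\norm{X}_2$. For the sharper second estimate I would observe that, after subtracting the quadratic correction appearing in the statement from the exact expansion above, only the cubic summand $\pm\tfrac{1}{2}D D\tp D$ remains, and $\norm{\tfrac{1}{2}D D\tp D}\ff \le \tfrac{1}{2}\norm{D}\ff^3$ yields the asserted $\ca{O}(\norm{D}\ff^3)$ bound.

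Since the entire argument is a short algebraic identity followed by elementary submultiplicative estimates, I do not foresee any conceptual obstacle---the only place to be careful is the symmetrization identity $X\tp D + D\tp X = 2\Phi(D\tp X)$, which must be tracked carefully so that the linear part of the expansion aligns exactly with $\PT$ and the quadratic contributions cancel against the correction term written in the statement.
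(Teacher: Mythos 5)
Your proof is correct and essentially identical to the paper's: both expand $Y\Apen{Y}$ exactly in powers of $D$ and read off $\PT(D)$ as the linear part, the bracketed term as the quadratic part, and $-\tfrac{1}{2}DD\tp D$ as the cubic remainder. The one point to pin down is the sign you left as ``$\pm$'': the exact identity is $Y\Apen{Y} - X\Apen{X} - \PT(D) = -\bigl[D\Phi(D\tp X) + \tfrac{1}{2}XD\tp D\bigr] - \tfrac{1}{2}DD\tp D$, so the second displayed claim holds with the bracket \emph{added} rather than subtracted (a sign slip present in the statement itself, and the ``added'' version is the one consistent with how the result is used later in Proposition \ref{Prop_Hessian}).
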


\begin{proof}
	Let $D = Y-X$, from the expression of $Y \Apen{Y}$ we can conclude that 
	\begin{equation*}
		\begin{aligned}
			\begin{aligned}
				& Y \Apen{Y}\\
				={}& X \Apen{X} + D \left( \frac{3}{2} I_p - \frac{1}{2} X\tp X \right) - X \Phi(X\tp D) - D\Phi(D\tp X) - \frac{1}{2}XD\tp D -\frac{1}{2}DD\tp D \\
				={}& X \Apen{X} + \PT(D) - D\Phi(D\tp X) - \frac{1}{2} XD\tp D -\frac{1}{2}DD\tp D,
			\end{aligned}
		\end{aligned}
	\end{equation*}
	and thus complete the proof. 
\end{proof}

In the following Lemma, we present the expression of $\nabla h(X)$:
\begin{prop}
	\label{Le_gradient_h}
	For any $X \in \bb{R}^{n\times p}$, 
	\begin{equation*}
		\nabla h(X) = G(X) \Apen{X} - X \Phi(X\tp G(X)) + \beta X(X\tp X - I_p).
	\end{equation*}
\end{prop}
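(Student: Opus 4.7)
The plan is a direct chain-rule computation, leveraging Proposition \ref{Prop_diff_A} to handle the inner composition. First I would split
\[
h(X) \;=\; g(X) \;+\; \tfrac{\beta}{4}\,\norm{X\tp X - I_p}\fs,
\]
so that it suffices to compute $\nabla g(X)$ and add the (standard) gradient $\beta X(X\tp X - I_p)$ of the quadratic penalty term.

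For $\nabla g$, I would invoke Proposition \ref{Prop_diff_A}, which identifies $\PT$ as the Jacobian of the mapping $X\mapsto X\Apen{X}$ (the residual being $\ca{O}(\norm{D}\fs)$). By the chain rule, for every direction $D\in\bb{R}^{n\times p}$,
\[
\inner{\nabla g(X), D} \;=\; \inner{G(X),\, \PT(D)} \;=\; \inner{G(X),\, D\Apen{X}} \;-\; \inner{G(X),\, X\Phi(D\tp X)}.
\]
The first term is easy: since $\Apen{X}=\tfrac32 I_p - \tfrac12 X\tp X$ is symmetric, its adjoint equals itself, hence $\inner{G(X), D\Apen{X}} = \inner{G(X)\Apen{X},\, D}$.

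For the second term I would use the identity $\inner{A, S} = \inner{\Phi(A), S}$ whenever $S$ is symmetric (a direct consequence of $\tr(AS)=\tr(A\tp S)$ when $S=S\tp$). Applied with $S=\Phi(D\tp X)$, this yields
\[
\inner{G(X),\, X\Phi(D\tp X)} = \tr\bigl(\Phi(X\tp G(X))\,\Phi(D\tp X)\bigr) = \tr\bigl(\Phi(X\tp G(X))\,D\tp X\bigr) = \inner{X\,\Phi(X\tp G(X)),\, D},
\]
where the middle equality again uses symmetry of $\Phi(X\tp G(X))$ to absorb $\Phi(D\tp X)$ into $D\tp X$. Assembling the two pieces and adding the penalty gradient gives the claimed formula. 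Since the identity must hold for all $D$, the gradient is determined uniquely.

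I do not anticipate a real obstacle here; the only subtlety is the symmetrization manoeuvre $\inner{A,S}=\inner{\Phi(A),S}$ for $S=S\tp$, which needs to be applied in the right direction so that the adjoint of $D\mapsto X\Phi(D\tp X)$ emerges cleanly as $G\mapsto X\Phi(X\tp G)$. Everything else is routine bookkeeping already organised by Proposition \ref{Prop_diff_A}.
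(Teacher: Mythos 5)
Your proposal is correct and follows essentially the same route as the paper: both identify $\PT$ as the Jacobian of $X\mapsto X\Apen{X}$ via Proposition \ref{Prop_diff_A} and then transfer it onto $G(X)$ by an adjoint/trace computation, the paper phrasing this as the self-adjointness of $\PT$ while you carry out the same symmetrization manoeuvre term by term. No gap.
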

\begin{proof}
	First, we aim to prove that the linear mapping $\PT$ is self-adjoint for any $X \in \bb{R}^{n\times p}$. From the expression of $\PT$, for any $Z, W \in \bb{R}^{n\times p}$ , we then obtain
	\begin{equation}
		\label{Eq_JA_adjoint}
		\begin{aligned}
			&\inner{\PT(W), Z} 
			= \inner{W \left(\frac{3}{2}I_p - \frac{1}{2}X\tp X\right) - X \Phi(X\tp W), Z}\\
			={}& \tr\left( Z\tp W  \left(\frac{3}{2}I_p - \frac{1}{2}X\tp X\right) \right) - \tr\left( Z\tp X \Phi(X\tp W) \right)\\
			\overset{(i)}{=}{}& \tr\left( W\tp Z  \left(\frac{3}{2}I_p - \frac{1}{2}X\tp X\right) \right) - \tr\left( W\tp X \Phi(X\tp Z) \right)\\
			={}& \inner{Z \left(\frac{3}{2}I_p - \frac{1}{2}X\tp X\right) - X \Phi(X\tp Z), W}
			= \inner{\PT(Z), W}.
		\end{aligned}
	\end{equation}
	Here $(i)$ follows the fact that $\tr \left( A B \right) = \tr\left(A\tp B\right)$ holds for any square matrix $A$ and any symmetric matrix $B$. 
	By Proposition \ref{Prop_diff_A}, for any $X, Y \in \bb{R}^{n\times p}$, let $D = Y-X$, we have
	\begin{equation*}
		\begin{aligned}
			&f\left( Y\Apen{Y} \right)- f\left( X\Apen{X} \right)\\
			={}& \inner{G(X), \PT(D) } + \ca{O}(\norm{D}\fs)
			= \inner{D, \PT(G(X)) } + \ca{O}(\norm{D}\fs)\\
			={}& \inner{D, G(X) \Apen{X} - X \Phi(X\tp G(X) ) } + \ca{O}(\norm{D}\fs),
		\end{aligned}
	\end{equation*} 
	which illustrates that 
	\begin{equation*}
		\nabla g(X) = G(X) \Apen{X} - X \Phi(X\tp G(X)). 
	\end{equation*}
	Then from the fact that $h(X) = g(X) + \beta \norm{X\tp X - I_p }\fs$, we could conclude that 
	\begin{equation*}
		\nabla h(X) = G(X) \Apen{X} - X \Phi(X\tp G(X)) + \beta X(X\tp X - I_p),
	\end{equation*}
	and complete the proof. 
\end{proof}

We can conclude from the definition of $M_1$ and Proposition \ref{Le_gradient_h} that $\norm{\nabla g(X) }\ff \leq 2M_1$ for any $X \in \Omega$. Besides, from the expression of $h(X)$ illustrated in Lemma \ref{Le_gradient_h}, we can conclude that $\nabla h(X) = \grad f(X)$ holds for any $X \in \ca{S}_{n,p}$. Furthermore,  the following proposition illustrates the expression of $\nabla^2 h(X)$. 
\begin{prop}
	\label{Prop_Hessian}
	Suppose $f(X)$ satisfies the conditions in Assumption \ref{Assumption_second_order}, then 
	\begin{equation*}
		\nabla^2 g(X)[D] = \PT  \left(\ca{H}(X)[\PT(D)]\right)- D \Phi(X\tp G(X)) - X \Phi(D\tp G(X)) - G(X) \Phi(D\tp X).
	\end{equation*}
	Moreover, 
	\begin{equation*}
		\nabla^2 h(X)[D] = \nabla^2 g(X) [D] + \beta(2 X\Phi(X\tp D) +  D (X\tp X - I_p)).
	\end{equation*}
\end{prop}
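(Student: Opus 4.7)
The plan is to differentiate the expression $\nabla g(X) = G(X)\Apen{X} - X\Phi(X\tp G(X))$ obtained in Proposition~\ref{Le_gradient_h} once more, using the ordinary chain and product rules. Two building blocks do most of the work. First, Proposition~\ref{Prop_diff_A} identifies $\PT$ as the Jacobian of the map $X \mapsto X\Apen{X}$, so the directional derivative of $G(X) = \nabla f(X\Apen{X})$ in direction $D$ is $\ca{H}(X)[\PT(D)]$. Second, the matrix $\Apen{X} = \frac{3}{2}I_p - \frac{1}{2}X\tp X$ has directional derivative $-\Phi(X\tp D)$ in direction $D$ (noting that $\Phi(X\tp D) = \Phi(D\tp X)$).

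Using these, I would differentiate term by term. The $G(X)\Apen{X}$ piece yields $\ca{H}(X)[\PT(D)]\,\Apen{X} - G(X)\Phi(D\tp X)$, while differentiating $-X\Phi(X\tp G(X))$ produces three contributions, namely $-D\,\Phi(X\tp G(X))$, $-X\,\Phi(D\tp G(X))$, and $-X\,\Phi(X\tp \ca{H}(X)[\PT(D)])$. Collecting everything then gives five terms.

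The mildly non-obvious step is to recognize that the two contributions involving $\ca{H}(X)[\PT(D)]$, namely $\ca{H}(X)[\PT(D)]\,\Apen{X}$ and $-X\,\Phi(X\tp \ca{H}(X)[\PT(D)])$, together equal $\PT(\ca{H}(X)[\PT(D)])$; this is immediate from the definition of $\PT$ once one uses $\Phi(M) = \Phi(M\tp)$ for every square matrix $M$. (Equivalently, this is the self-adjointness of $\PT$ established in \eqref{Eq_JA_adjoint}.) After this regrouping, what remains matches the claimed formula for $\nabla^2 g(X)[D]$ exactly.

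For the full Hessian $\nabla^2 h(X)[D]$, it suffices to add the Hessian of the penalty term $\frac{\beta}{4}\norm{X\tp X - I_p}\fs$, whose gradient is $\beta X(X\tp X - I_p)$. Differentiating in direction $D$ by the product rule yields $\beta\bigl(D(X\tp X - I_p) + 2X\,\Phi(X\tp D)\bigr)$, completing the proof. No serious obstacle arises; the only point requiring care is tracking how the chain rule interacts with the nonlinear reparametrization $X \mapsto X\Apen{X}$, and that is exactly what Propositions~\ref{Prop_diff_A} and~\ref{Le_gradient_h} are designed to handle.
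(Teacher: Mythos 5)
Your proof is correct, but it takes a genuinely different route from the paper's. You differentiate the gradient formula $\nabla g(X)=G(X)\Apen{X}-X\Phi(X\tp G(X))$ from Proposition~\ref{Le_gradient_h} once more by the product and chain rules, which hands you $\nabla^2 g(X)[D]$ directly as an operator identity; your regrouping of $\ca{H}(X)[\PT(D)]\Apen{X}-X\Phi\left(X\tp\ca{H}(X)[\PT(D)]\right)$ into $\PT\left(\ca{H}(X)[\PT(D)]\right)$ is exactly right, and the correct justification is the one you give first, $\Phi(M)=\Phi(M\tp)$ --- the parenthetical appeal to self-adjointness of $\PT$ is a different statement and not what is needed there, though it is harmless. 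The paper instead expands $f\left(Y\Apen{Y}\right)-f\left(X\Apen{X}\right)$ to second order using the cubic-remainder identity in Proposition~\ref{Prop_diff_A} and reads the Hessian off the resulting quadratic form $\frac{1}{2}\inner{\ca{H}(X)[\PT(D)],\PT(D)}-\inner{\Phi(D\tp G(X)),\Phi(D\tp X)}-\frac{1}{2}\inner{D\tp D, X\tp G(X)}$. Your approach buys an unambiguous Hessian--vector product with no need to verify that the stated operator reproduces the quadratic form (a quadratic form only pins down the symmetric part of an operator, so the paper's final step leaves that consistency check implicit); the paper's approach avoids differentiating $G(X)$ and reuses the Taylor machinery already set up for the gradient. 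The penalty-term computation is the same in both, and your formula $\beta\left(D(X\tp X-I_p)+2X\Phi(X\tp D)\right)$ matches the paper's.
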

\begin{proof}
	As illustrated in \eqref{Eq_JA_adjoint} from Lemma \ref{Le_gradient_h}, the mapping $\PT$ is self-adjoint for any $X \in \bb{R}^{n\times p}$.  
	Then by Proposition \ref{Prop_diff_A}, for any $Y \in \bb{R}^{n\times p}$ and let $D = Y-X$, we have
	\begin{equation*}
		\begin{aligned}
			&f\left( Y \Apen{Y} \right) - f\left( X \Apen{X} \right) \\
			={}& f\left( X \Apen{X} + \PT(D) - \left[D \Phi(D\tp X) + \frac{1}{2} X D\tp D \right] \right)  - f\left( X \Apen{X} \right) + \ca{O}(\norm{D}\ff^3) \\
			={}& \inner{G(X), \PT(D) - \left[D \Phi(D\tp X) + \frac{1}{2} X D\tp D \right] } + \frac{1}{2}\inner{\ca{H}(X)[\PT(D)] , \PT(D)} + \ca{O}(\norm{D}\ff^3)\\
			={}& \inner{G(X), \PT(D)} - \inner{G(X), D \Phi(D\tp X) + \frac{1}{2} X D\tp D} + \frac{1}{2}\inner{\ca{H}(X)[\PT(D)] , \PT(D)} + \ca{O}(\norm{D}\ff^3)\\
			={}& \inner{D, \nabla g(X)} - \inner{\Phi(D\tp G(X)), \Phi(D\tp X)} - \frac{1}{2} \inner{D\tp D, X\tp G(X)} \\
			&+ \frac{1}{2} \inner{\ca{H}(X)[\PT(D)] , \PT(D)} + \ca{O}(\norm{D}\ff^3).\\
		\end{aligned}
	\end{equation*}
	Therefore, the hessian of $g(X)$ can be expressed as 
	\begin{equation*}
		\nabla^2 g(X)[D] = \PT  \left(\ca{H}(X)[\PT(D)]\right)- D \Phi(X\tp G(X)) - X \Phi(D\tp G(X)) - G(X) \Phi(D\tp X).
	\end{equation*}
	
	Moreover, since 
	\begin{equation*}
		\begin{aligned}
			&\norm{Y\tp Y - I_p}\fs -\norm{X\tp X - I_p}\fs \\
			={}& \inner{4D, X(X\tp X - I_p)} + 4\inner{\Phi(D\tp X), \Phi(D\tp X)} + 2\inner{D\tp D, X\tp X - I_p} + \ca{O}(\norm{D}\ff^3),
		\end{aligned}
	\end{equation*}
	the hessian of $h(X)$ can be expressed as 
	\begin{equation*}
		\nabla^2 h(X)[D] = \nabla^2 g(X) [D] + \beta\left[2 X\Phi(X\tp D) +  D (X\tp X - I_p)\right].
	\end{equation*} 
\end{proof}

Next, we give an important equality.
\begin{lem}
	\label{Le_inner_g_X}
	For any $X \in \bb{R}^{n\times p}$, we have
	\begin{equation*}
		\inner{X(X\tp X - I_p), \nabla g(X)} =  -\frac{3}{2}\inner{\left( X\tp X - I_p \right)^2, \Phi(X\tp G(X))}.
	\end{equation*}
\end{lem}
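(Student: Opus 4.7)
The plan is to substitute the explicit formula for $\nabla g(X)$ from Proposition~\ref{Le_gradient_h},
$$\nabla g(X) = G(X)\Apen{X} - X\Phi(X\tp G(X)),$$
directly into the inner product on the left-hand side and expand. To keep the bookkeeping clean, I would introduce $S := X\tp X - I_p$, so that $\Apen{X} = I_p - \tfrac{1}{2} S$ and $X\tp X = I_p + S$. The left-hand side then splits as
$$\inner{XS,\, G(X)\bigl(I_p - \tfrac12 S\bigr)} - \inner{XS,\, X\,\Phi(X\tp G(X))},$$
and by the cyclic property of the trace each piece becomes a trace of a product of $S$, $X\tp G(X)$, and $\Phi(X\tp G(X))$.

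The key simplifying observation is that for any matrix $A$ and any symmetric matrix $T$ one has $\tr(T A) = \tr(T\,\Phi(A))$, because $T$ pairs to zero against the antisymmetric part of $A$. Applying this with $T = S$ and $T = S^2$ (both symmetric) converts every occurrence of $X\tp G(X)$ inside a trace into $\Phi(X\tp G(X))$. After this substitution, the terms that are linear in $S$ coming from the first half of $\nabla g(X)$ (the piece $G(X) \cdot I_p$) and from the second half (the piece $I_p \cdot \Phi(X\tp G(X))$ after replacing $X\tp X$ by $I_p + S$) cancel exactly. The remaining quadratic-in-$S$ contributions, one from $-\tfrac12 S$ in $\Apen{X}$ and one from the $S$ in $X\tp X = I_p + S$, combine additively to yield
$$-\tfrac12 \tr\bigl(S^2\,\Phi(X\tp G(X))\bigr) - \tr\bigl(S^2\,\Phi(X\tp G(X))\bigr) = -\tfrac32\inner{S^2,\,\Phi(X\tp G(X))},$$
which is the desired right-hand side.

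The argument is essentially routine algebra; I do not anticipate a conceptual obstacle. The only place that requires a bit of care is the second term $\inner{XS,\, X\,\Phi(X\tp G(X))}$, where substituting $X\tp X = I_p + S$ produces both a linear and a quadratic contribution in $S$. Verifying that the linear contribution precisely cancels the linear contribution coming from the $I_p$ part of $\Apen{X}$ in the first term is the main (mechanical) check, and after that cancellation the $-\tfrac12$ and $-1$ coefficients of the two surviving quadratic-in-$S$ pieces add to $-\tfrac32$, giving the claimed identity.
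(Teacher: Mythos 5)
Your proposal is correct and follows essentially the same route as the paper: substitute the formula for $\nabla g(X)$ from Proposition~\ref{Le_gradient_h}, use cyclicity of the trace together with the fact that a symmetric matrix pairs to zero against the skew part of $X\tp G(X)$, and observe that $(X\tp X - I_p)\Apen{X} - (X\tp X - I_p)X\tp X = -\frac{3}{2}(X\tp X - I_p)^2$. Your substitution $S = X\tp X - I_p$ is only a cosmetic repackaging of the paper's computation.
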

\begin{proof}
	Consider the inner product of $\nabla g(X)$ and $X(X\tp X - I_p)$, the following equality holds for any $X \in \bb{R}^{n\times p}$,
	\begin{equation*}
		\begin{aligned}
			&\inner{X(X\tp X - I_p), \nabla g(X)} \\
			={}& \inner{X(X\tp X - I_p), G(X)\Apen{X}} - \inner{X(X\tp X - I_p), X \Phi(X\tp G(X))}\\
			={}& \inner{(X\tp X - I_p)\Apen{X} , \Phi(X\tp G(X))} - \inner{(X\tp X - I_p)X\tp X,  \Phi(X\tp G(X))}\\
			={}& -\frac{3}{2}\inner{\left( X\tp X - I_p \right)^2, \Phi(X\tp G(X))}.
		\end{aligned}
	\end{equation*}
\end{proof}

Finally, we arrive at the main proposition in this preliminary section.
\begin{prop}
	\label{Prop_uniformly_bounded}
	Suppose  Assumption \ref{Assumption_global_f} holds, and $\tX$ is a first-order stationary point of \ref{Prob_Pen}, then $\norm{\tX}_2 \leq 1 + \frac{\hat{M}_1}{\beta}$. Furthermore, when $\beta \geq \hat{\beta}$, we can conclude that all the first-order stationary points of \ref{Prob_Pen} are contained in $\Omega$. 
\end{prop}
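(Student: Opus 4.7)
The plan is to start from the first-order optimality condition $\nabla h(\tX) = 0$, which by Proposition~\ref{Le_gradient_h} takes the form
\[
\beta \tX(\tX\tp \tX - I_p) = -G(\tX)\Apen{\tX} + \tX\,\Phi(\tX\tp G(\tX)),
\]
and to diagonalize both sides using the thin SVD $\tX = U\Sigma V\tp$. Extending $U$ to an orthogonal $[U,U_\perp]$ and multiplying on the left by $[U,U_\perp]\tp$ and on the right by $V$, each factor becomes an explicit expression in $\Sigma$ and $T := U\tp G(\tX) V$: one checks $\Apen{\tX}V = V(\tfrac{3}{2}I_p - \tfrac{1}{2}\Sigma^2)$ and $V\tp \Phi(\tX\tp G(\tX)) V = \tfrac{1}{2}(\Sigma T + T\tp \Sigma)$, while the penalty term becomes $\beta\Sigma(\Sigma^2 - I_p)$ in the $U$-block and $0$ in the $U_\perp$-block.

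The key step is to read off the $(i,i)$ entry of the $U$-block equation. Using the identity $\Phi(\Sigma T)_{ii} = \sigma_i T_{ii}$, this entry collapses to
\[
T_{ii}\,\tfrac{3 - \sigma_i^2}{2} - \sigma_i^2 T_{ii} + \beta\sigma_i(\sigma_i^2 - 1) = 0,
\]
which factors cleanly as $(\sigma_i^2 - 1)(\beta\sigma_i - \tfrac{3}{2}T_{ii}) = 0$. Hence each singular value of $\tX$ equals either $1$ or $3T_{ii}/(2\beta)$. Assumption~\ref{Assumption_global_f} then supplies the uniform bound $|T_{ii}|\leq \norm{T}_2 \leq \norm{G(\tX)}\ff \leq \hat M_1$, so $\sigma_i \leq \max\{1,\, 3\hat M_1/(2\beta)\}$ for every $i$, from which the claimed bound $\norm{\tX}_2 \leq 1 + \hat M_1/\beta$ follows.

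The ``furthermore'' assertion is then a one-line consequence: when $\beta \geq \hat\beta \geq 12\hat M_1$, one has $\hat M_1/\beta \leq 1/12$, so $\norm{\tX}_2 \leq 1 + 1/12$, which is exactly the defining inequality of $\Omega$. The main obstacle I anticipate is the SVD bookkeeping that produces the factored identity $(\sigma_i^2 - 1)(\beta\sigma_i - \tfrac{3}{2}T_{ii}) = 0$: one has to transport $\Apen{\tX}$, $\Phi(\tX\tp G(\tX))$ and $\tX(\tX\tp \tX - I_p)$ into the $(\Sigma, T)$ basis correctly and then recognize the cancellation $T_{ii}(3-\sigma_i^2)/2 - \sigma_i^2 T_{ii} = -\tfrac{3}{2}T_{ii}(\sigma_i^2 - 1)$. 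Once this is in hand, the rest of the argument is elementary; the off-diagonal equations and the $U_\perp$-block condition $\tilde T \tilde A = 0$ (with $\tilde T = U_\perp\tp G(\tX)V$) are further consequences of $\nabla h(\tX) = 0$ but play no role in bounding $\norm{\tX}_2$.
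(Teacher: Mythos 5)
Your argument is correct, and it takes a genuinely different route from the paper. The paper argues by contradiction: assuming $\sigma_p > 1+\hat{M}_1/\beta$, it tests the stationarity condition against the single rank-one direction $\tilde{D}=u_pv_p\tp$ built from the top singular pair, bounds $|\inner{\tilde{D},\nabla g(\tX)}|$ by $\tfrac{3}{2}(\sigma_p^2+1)\hat{M}_1$, and derives $\inner{\nabla h(\tX),\tilde{D}}>0$ via an inequality chain (which itself quietly invokes $\beta\geq 3\hat{M}_1$). You instead diagonalize the full equation $\nabla h(\tX)=0$ in the SVD basis; your bookkeeping is right ($\Apen{\tX}V=V(\tfrac{3}{2}I_p-\tfrac{1}{2}\Sigma^2)$, $U\tp\tX\Phi(\tX\tp G(\tX))V=\tfrac{1}{2}(\Sigma^2T+\Sigma T\tp\Sigma)$, whose $(i,i)$ entry is $\sigma_i^2T_{ii}$), and the resulting exact factorization $(\sigma_i^2-1)\bigl(\beta\sigma_i-\tfrac{3}{2}T_{ii}\bigr)=0$ is a strictly stronger structural statement: every singular value of a stationary point is either exactly $1$ or exactly $3T_{ii}/(2\beta)$, which also makes the feasible-or-far-from-the-manifold dichotomy of Theorem \ref{The_Equivalence_local} visually transparent. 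One small caveat: your bound $\norm{\tX}_2\leq\max\{1,\,3\hat{M}_1/(2\beta)\}$ implies the stated bound $1+\hat{M}_1/\beta$ only when $\beta\geq\hat{M}_1/2$ (for smaller $\beta$ one has $3\hat{M}_1/(2\beta)>1+\hat{M}_1/\beta$), so the first claim of the proposition is not literally recovered for tiny $\beta$; this is harmless since the paper's own proof carries the analogous restriction $\beta\geq 3\hat{M}_1$ and the proposition is only ever applied with $\beta\geq\hat{\beta}\geq 12\hat{M}_1$, where your bound gives $\norm{\tX}_2\leq 1$ and hence membership in $\Omega$ immediately.
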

\begin{proof}
	Let $\tX = U\Sigma V\tp$ be the singular value decomposition of $\tX$,
	namely, $U\in \bb{R}^{n\times p}$ and $V\in \bb{R}^{n\times p}$ are the orthogonal matrices and $\Sigma$
	is a diagonal matrix with singular values of $\tX$ on its diagonal and $\sigma_1 \leq ...\leq \sigma_p$. 
	Suppose the statement to be proved is not true, we achieve $\sigma_p > 1 + \frac{\hat{M}_1}{\beta}$. 
	Let $\tilde{D} := U \Diag(0,...,0, 1) V\tp$, then from the first-order optimality condition, we have
	\begin{equation*}
		\inner{\nabla h(\tilde{X}), \tilde{D}} = 0.
	\end{equation*}
	Besides, Assumption \ref{Assumption_global_f} illustrates that $\norm{G(X)}\ff$ is bounded and thus
	\begin{equation*}
		\begin{aligned}
			&|\inner{\tilde{D},\nabla g(\tX)}| \\
			={}& \|\inner{\tilde{D}, G(X)\left( \frac{3}{2}I_p - \frac{1}{2} X\tp X \right) - \inner{D, X\Phi(X\tp G(X))}} \|\\
			\leq{}& \left| \tr\left( \left( \frac{3}{2}I_p - \frac{1}{2} X\tp X \right)D\tp G(X) \right) \right| + \frac{1}{2}\left| \tr\left( D\tp XX\tp G(X) \right) \right| + \frac{1}{2} \left|\tr\left(  G(X)\tp XD\tp X \right)  \right|\\
			\leq{}& \left|\frac{3}{2} - \frac{1}{2}\sigma_{p}^2\right| \hat{M}_1 + \frac{1}{2}\sigma_{p}^2 \hat{M}_1 + \frac{1}{2}\sigma_{p}^2 \hat{M}_1\\
			\leq{}& \frac{3(\sigma_{p}^2 + 1)}{2}\hat{M}_1. 
		\end{aligned}
	\end{equation*}
	On the other hand, from the definition of $\tilde{D}$, we can conclude that
	\begin{equation*}
		\inner{\tX(\tX\tp\tX - I_p), \tilde{D}} = \sigma_p(\sigma_p^2-1)  .
	\end{equation*}
	Notice that when $\beta \geq 3\hat{M}_1$, for any $t \geq 1 + \frac{\hat{M}_1}{\beta}$, it holds that 
	\begin{equation}
		\frac{(t^2-1)t}{t^2 + 1} > \frac{(t^2-1)}{t^2 + 1} \geq 1 - \frac{2}{t^2 +1} \geq 1 - \frac{1}{1 +  \frac{3\hat{M}_1}{\beta}} \geq \frac{3 \hat{M}_1}{2\beta}. 
	\end{equation}
	Therefore, when $\beta \geq 3\hat{M}_1$, we achieve
	\begin{equation*}
		\inner{\nabla h(\tX), \tilde{D}} \geq \beta \inner{\tX(\tX\tp\tX - I_p), \tilde{D}} - |\inner{\tilde{D},\nabla g(\tX)}| \geq (\sigma_p^2-1) (\beta \sigma_{p} ) - \frac{3(\sigma_{p}^2 + 1)}{2}\hat{M}_1 >0,
	\end{equation*}
	which contradicts to the first-order optimality. Therefore, we can conclude that $\norm{\tX}_2 \leq 1 + \frac{\hat{M}_1}{\beta}$. 
\end{proof}

	Additionally, in the following proposition, we illustrate that Assumption \ref{Assumption_global_f} implies that \ref{Prob_Pen} is bounded below for any $\beta > 0$. 
\begin{prop}
	Suppose Assumption \ref{Assumption_global_f} holds, then for any $\beta > 0$, \ref{Prob_Pen} is bounded below over $\bb{R}^{n\times p}$. 
\end{prop}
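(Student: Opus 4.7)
The plan is to show that $h(X)$ dominates a degree-four polynomial in $\norm{X}\ff$ with a positive leading coefficient, which is bounded below. Since the penalty term $\frac{\beta}{4}\norm{X\tp X - I_p}\fs$ is nonnegative, the danger is that $f(X\Apen{X})$ tends to $-\infty$ along some sequence with $\norm{X}\ff \to \infty$. Assumption \ref{Assumption_global_f} provides a global Lipschitz constant $L$ for $f$, giving the uniform lower bound $f(Y) \geq f(0) - L\norm{Y}\ff$ for every $Y \in \bb{R}^{n\times p}$.

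Applied at $Y = X\Apen{X}$, this reduces the problem to controlling the growth of $\norm{X\Apen{X}}\ff$ versus the growth of the penalty term. For the first quantity, I would bound the spectrum of $\Apen{X}$ directly: its singular values are $|\frac{3}{2} - \frac{1}{2}\sigma_i^2|$, where $\sigma_i$ are the singular values of $X$, so $\norm{\Apen{X}}_2 \leq \frac{3}{2} + \frac{1}{2}\norm{X}_2^2 \leq \frac{3}{2} + \frac{1}{2}\norm{X}\fs$. Combining with $\norm{X\Apen{X}}\ff \leq \norm{X}\ff \cdot \norm{\Apen{X}}_2$ yields $\norm{X\Apen{X}}\ff \leq \frac{3}{2}\norm{X}\ff + \frac{1}{2}\norm{X}\ff^3$, which is a cubic in $r := \norm{X}\ff$.

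For the penalty term I would invoke Cauchy--Schwarz on the squared singular values: since $\sum_i \sigma_i^2 = \norm{X}\fs$, we have $\sum_i \sigma_i^4 \geq \norm{X}\ff^4/p$, hence
\begin{equation*}
\norm{X\tp X - I_p}\fs = \sum_i (\sigma_i^2-1)^2 = \sum_i \sigma_i^4 - 2\norm{X}\fs + p \geq \frac{\norm{X}\ff^4}{p} - 2\norm{X}\fs + p.
\end{equation*}
Putting the two pieces together gives
\begin{equation*}
h(X) \geq f(0) - L\Bigl(\tfrac{3}{2}r + \tfrac{1}{2}r^3\Bigr) + \frac{\beta}{4}\Bigl(\tfrac{r^4}{p} - 2r^2 + p\Bigr),
\end{equation*}
a polynomial in $r$ whose leading term $\frac{\beta}{4p}r^4$ is positive for any $\beta > 0$. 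Such a polynomial attains a finite minimum on $[0,\infty)$, which is the desired uniform lower bound on $h$.

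I do not expect any deep obstacle here: the whole argument is a coercivity/growth-rate comparison between the cubic upper bound on $\norm{X\Apen{X}}\ff$ and the quartic lower bound on the penalty. The only mild care required is to make the spectral estimate $\norm{\Apen{X}}_2 \leq \frac{3}{2} + \frac{1}{2}\norm{X}\fs$ explicit (since $\frac{3}{2} - \frac{1}{2}\sigma_i^2$ can be either sign), and to keep track of the constants so that the final polynomial has the correct positive leading coefficient.
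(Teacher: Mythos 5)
Your proof is correct and follows essentially the same strategy as the paper's: the global Lipschitz continuity of $f$ controls $f\left(X\Apen{X}\right)$ by a cubic in the norm of $X$, while the penalty term grows quartically, so $h$ is coercive enough to be bounded below for any $\beta>0$. The only cosmetic difference is that you derive a single explicit quartic lower bound in $r=\norm{X}\ff$ (via Cauchy--Schwarz on the squared singular values), whereas the paper works with $\norm{X}_2$, splits into the regions where $\norm{X}_2$ is large or small, and invokes boundedness of $h$ on the small region; your version avoids the case split and is, if anything, slightly cleaner.
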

\begin{proof}
	For any $X \in \bb{R}^{n\times p}$, it holds from Assumption \ref{Assumption_global_f} that 
	\begin{equation*}
		\begin{aligned}
			&|g(X) - g(0)| = \left| f\left(X\Apen{X}\right) - f(0) \right| \leq \hat{M}_1 \norm{X\Apen{X}}\ff\\
			\leq{}& p\hat{M}_1 \left( \frac{1}{2} \norm{X}_2^3 + \frac{3}{2} \norm{X}_2 \right). 
		\end{aligned}
	\end{equation*}
	Moreover, it holds that $\norm{X\tp X - I_p}\ff\geq (\norm{X}_2^2 -1)^2$. Therefore, for any $X \in \bb{R}^{n\times p}$ that satisfies $\norm{X}_2 \geq \frac{128p\hat{M}_1}{\beta} + 2$, it holds that $(\norm{X}_2^2 -1)^2 \geq \frac{1}{16} \norm{X}_2^4$. Then we achieve 
	\begin{equation*}
		\begin{aligned}
			&h(X) - h(0) \geq \frac{\beta}{4} \norm{X\tp X - I_p}\ff - |g(X) - g(0)|\\
			\geq{}& \frac{\beta}{4}(\norm{X}_2^2 -1)^2 -  p\hat{M}_1 \left( \frac{1}{2} \norm{X}_2^3 + \frac{3}{2} \norm{X}_2 \right)\\
			\geq{}& \frac{\beta}{64}\norm{X}_2^4 - p\hat{M}_1\left( \frac{1}{2} \norm{X}_2^3 + \frac{3}{2} \norm{X}_2 \right) \geq \frac{\beta}{64} \norm{X}_2^4 - 2p\hat{M}_1\norm{X}_2^3 > 0. 
		\end{aligned}
	\end{equation*}
	As a result, it holds that 
	\begin{equation*}
		\begin{aligned}
			&\inf_{X \in \bb{R}^{n\times p}} h(X) = \min\left\{\inf_{\norm{X}_2 \leq \frac{128p\hat{M}_1}{\beta} + 2} h(X), \inf_{\norm{X}_2 \geq \frac{128p\hat{M}_1}{\beta} + 2} h(X)  \right\}\\
			\geq{}& \min\left\{\inf_{\norm{X}_2 \leq \frac{128p\hat{M}_1}{\beta} + 2} h(X), h(0)  \right\}
			> -\infty. 
		\end{aligned}
	\end{equation*}
	Hence we complete the proof. 
\end{proof}


\subsection{Computational complexity of the first-order oracle}

In this subsection, we analyze the cost
of
calculating the first-order derivative of $h(X)$,
which takes the main computational cost in each iterate
of a first-order algorithm such as gradient descent methods, nonlinear conjugate gradient methods, etc.
Then we compare it with the fundamental operations in Riemannian optimization approaches.  From the expression for $\nabla h(X)$ illustrated in Lemma \ref{Le_gradient_h}, we find  that computing $\nabla h(X)$ only involves computing $\nabla f$ and matrix-matrix multiplication. The computational cost of the basic linear algebra operations and the overall costs of computing the gradient of $h$ are listed in Table \ref{Table_gradient_h}, while a comparison between several fundamental operations in Riemannian optimization and their corresponding operations for $h(X)$ are listed in Table \ref{Table_Compare}. 
Here, $\rm{FO}$ denotes the computational costs of computing the gradient of $f$, and those terms in bold stand for the operations that cannot be parallelized.

\begin{table}[htbp]
	\small
	\centering
	\begin{tabular}{|c|c|c|}
		\hline
		\multirow{4}{*}{Compute $\nabla f\left( X \Apen{X} \right) \Apen{X}$}& $X\tp X$ & $np^2$ \\ 
		& $X(X\tp X - I_p)$ & $2np^2$ \\ 
		& $G(X) = \nabla f\left(Y \right)\left\vert_{Y =  X \Apen{X}} \right.$ & $1\rm{FO}$ \\ 
		& $G(X) \Apen{X}$ & $2np^2$ \\ \hline
		\multirow{2}{*}{Compute $X \Phi\left(X\tp G(X)\right)$} & $\Phi\left(X\tp G(X)\right)$ & $2np^2$ \\
		& $X \Phi\left(X\tp G(X)\right)$ & $2np^2$ \\\hline
		In total & \multicolumn{2}{c|}{$1\rm{FO} + 9np^2$} \\ \hline
	\end{tabular}
	\caption{Computational complexity the first-order oracle in \Expen. }
	\label{Table_gradient_h}
\end{table}


{ 
\begin{table}[htbp]
	\scriptsize
	\centering
	\begin{tabular}{|cc|cc|cc|}
		\hline
		\multicolumn{2}{|c|}{Riemannian optimization approaches} & \multicolumn{2}{c|}{\ref{Prob_Pen} based approaches} & \multicolumn{2}{|c|}{PLAM \cite{gao2019parallelizable}}\\ \hline
		\multirow{2}{*}{Riemannian gradient} & $\nabla f(X) - X \nabla f(X)\tp X$ &   \multirow{2}{*}{Euclidean gradient}  &  $\nabla h(X)$     & \multirow{2}{*}{Descending direction}  &  $\nabla_x \ca{L}_{\beta}(X, \Phi(\nabla f(X)\tp X))$      \\
		& $1\rm{FO} + 4np^2$ \cite{gao2019parallelizable}  &             & $1\rm{FO} + 9np^2 $ &             & $1\rm{FO} + 7np^2 $ \cite{gao2019parallelizable} \\ \hline
		\multirow{2}{*}{Retraction}&  Cholesky factorization: $3np^2 + {\bf \ca{O}(p^3)}$  &  \multirow{2}{*}{No retraction}           &       ---    &\multirow{2}{*}{No retraction}           &       ---       \\
		& Gram-Schmidt: ${\bf 2np^2}$           &             &     ---  &             &     ---       \\ \hline
		\multirow{2}{*}{Vector transport \cite{Absil2009optimization}} & $\xi_X \in \ca{T}_X \to \xi_X - Y \Phi(Y\tp \xi_X) \in \ca{T}_Y$   &   \multirow{2}{*}{No vector transport}  &  ---  &   \multirow{2}{*}{No vector transport}  &  ---         \\     
		& $4np^2$           &             & --- &             & --- \\ \hline
	\end{tabular}
	\caption{Comparison on the computational complexity 
		of the first-order oracles among Riemannian optimization
		approaches, \Expens based approaches and the specialized optimization algorithm PLAM. Here $\ca{L}_{\beta}(X, \Lambda):= f(X) - \frac{1}{2} \inner{X\tp X - I_p, \Lambda} + \frac{\beta}{4} \norm{X\tp X - I_p}\fs$.} 
	\label{Table_Compare}
\end{table}
}

\section{Properties of \ref{Prob_Pen}}

In this  section, we analyze the theoretical properties of \ref{Prob_Pen}. 

\subsection{First-order relationship}

In this subsection, we study the first-order relationship between OCP and \Expen.  The main theoretical results of this subsection can be summarized in Figure \ref{Fig_roadmap_FOSP}. 
Here ``A.", ``D.", ``P.", ``T." are the abbreviations of  ``Assumption", ``Definition", ``Proposition", and ``Theorem", respectively.

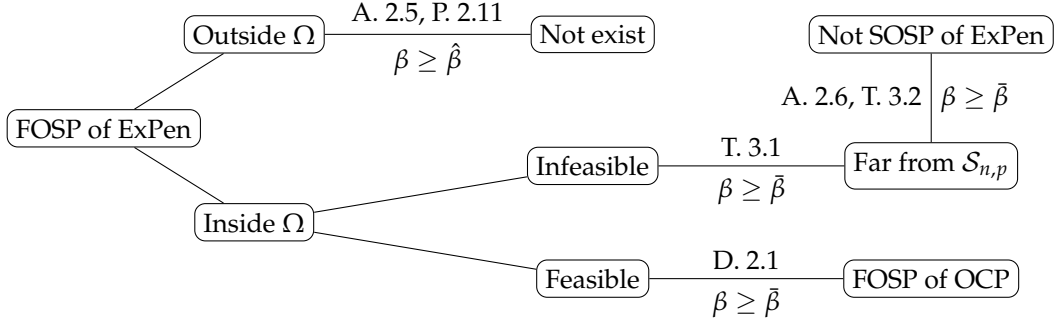
\begin{figure}[htbp]
	\tikzstyle{level 1}=[level distance=2cm, sibling distance=2.5cm]
	\tikzstyle{level 2}=[level distance=4.5cm, sibling distance=1.5cm]
	\tikzstyle{level 3}=[level distance=4.5cm, sibling distance=1.5cm]
	
	\tikzstyle{smallbag} = [text width=8em, shape=rectangle, rounded corners,draw, align=center]
	\tikzstyle{bag} = [ shape=rectangle, rounded corners,draw, align=center]
	\tikzstyle{end} = [circle, minimum width=3pt,fill, inner sep=0pt]
	\begin{tikzpicture}[grow=right]
		\node[bag] { FOSP of \ref{Prob_Pen}}
		child {
			node[bag] {Inside $\Omega$}        
			child {
				node[bag] {Feasible}
				child {
					node[bag] {FOSP of \ref{Prob_Ori}}
					edge from parent
					node[above] {D. \ref{Defin_FOSP}}
					node[below] {$\beta \geq \bar{\beta}$}
				}
				edge from parent
			}
			child {
				node[bag] {Infeasible}
				child {
					node[bag] {Far from $\ca{S}_{n,p}$}
					child[grow = up,level distance=1.75cm] {
						node[bag] {Not SOSP of \ref{Prob_Pen}}
						edge from parent
						node[left] {A. \ref{Assumption_second_order}, T. \ref{The_strict_saddle}}
						node[right] {$\beta \geq \bar{\beta}$}
					}
					edge from parent
					node[above] {T. \ref{The_Equivalence_local}}
					node[below] {$\beta \geq \bar{\beta}$}
				}
				edge from parent
			}
			edge from parent 
		}
		child {
			node[bag] {Outside $\Omega$}        
			child {
				node[bag] {Not exist}
				edge from parent
				node[above] {A. \ref{Assumption_global_f}, P. \ref{Prop_uniformly_bounded}}
				node[below] {$\beta \geq \hat{\beta}$}
			}
			edge from parent         
		};
	\end{tikzpicture}
	\caption{Roadmap of the first-order relationship between \ref{Prob_Ori} and \ref{Prob_Pen}.
	}
	\label{Fig_roadmap_FOSP}
\end{figure}

The following theorem categorizes the first-order stationary points of \ref{Prob_Pen} in $\Omega$. 
\begin{theo}
	\label{The_Equivalence_local}
	Suppose $X^* \in \Omega$ is a first-order stationary point of \ref{Prob_Pen},  and $\beta \geq \bar{\beta}$, then either $X^*$ is a first-order stationary point of \ref{Prob_Ori}, or $\sigma_{\min}(X^*) \leq \sqrt{\frac{2M_1}{\beta}}$. 
\end{theo}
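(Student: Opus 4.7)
The plan is to exploit the first-order condition $\nabla h(X^*) = 0$ by testing it against a carefully chosen direction lying in the ``normal-like'' space, so that the contribution from the objective-function part gets absorbed via Lemma~\ref{Le_inner_g_X}, leaving only a clean relation between the infeasibility $\norm{(X^*)\tp X^* - I_p}\ff$ and the smallest singular value of $X^*$. The natural test direction is $V := X^*\bigl((X^*)\tp X^* - I_p\bigr)$, since this is exactly the gradient of $\tfrac{1}{4}\norm{X\tp X - I_p}\fs$ at $X^*$ and since Lemma~\ref{Le_inner_g_X} was tailor-made to handle $\inner{V, \nabla g(X^*)}$.

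Carrying this out, I would first substitute the gradient formula from Proposition~\ref{Le_gradient_h} into $\inner{\nabla h(X^*), V} = 0$ to obtain
\begin{equation*}
\inner{V, \nabla g(X^*)} + \beta \norm{V}\fs = 0.
\end{equation*}
Applying Lemma~\ref{Le_inner_g_X} to the first term replaces it with $-\tfrac{3}{2}\inner{\bigl((X^*)\tp X^* - I_p\bigr)^2,\, \Phi((X^*)\tp G(X^*))}$, yielding the key identity
\begin{equation*}
\beta \norm{X^*\bigl((X^*)\tp X^* - I_p\bigr)}\fs \;=\; \tfrac{3}{2}\,\inner{\bigl((X^*)\tp X^* - I_p\bigr)^2,\; \Phi((X^*)\tp G(X^*))}.
\end{equation*}
Writing $M := (X^*)\tp X^* - I_p$ and $S := \Phi((X^*)\tp G(X^*))$ (both symmetric, $M^2$ positive semidefinite), the right-hand side equals $\tr(MSM)$ and is bounded by $\norm{S}_2 \norm{M}\ff^2$, while the left-hand side is at least $\beta \sigma_{\min}(X^*)^2 \norm{M}\ff^2$.

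To finish, I would split into cases. If $M = 0$, then $X^* \in \ca{S}_{n,p}$ and $X^* \Apen{X^*} = X^*$, so $\nabla h(X^*) = \grad f(X^*)$ and the first-order condition immediately makes $X^*$ a FOSP of \ref{Prob_Ori}. Otherwise I divide by $\norm{M}\ff^2$ and bound $\norm{S}_2 \le \norm{X^*}_2 \norm{G(X^*)}\ff \le \tfrac{13}{12} M_1$ using $X^* \in \Omega$ together with the definition of $M_1$, which gives $\sigma_{\min}(X^*)^2 \le \tfrac{3}{2\beta}\cdot\tfrac{13}{12}M_1 = \tfrac{13 M_1}{8\beta} \le \tfrac{2 M_1}{\beta}$.

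The main obstacle is really conceptual rather than computational: recognizing that the ``right'' test direction is $X^*((X^*)\tp X^* - I_p)$ and that Lemma~\ref{Le_inner_g_X} was set up precisely so that testing against this direction removes the $\nabla g$ contribution down to a quantity proportional to $\norm{M}\ff^2$, which then cancels with the analogous factor coming from $\sigma_{\min}(X^*)^2 \norm{M}\ff^2$ on the other side. The only care needed in the estimates is using the operator-norm bound $\inner{M^2, S} \le \norm{S}_2 \norm{M}\ff^2$ (valid because $M$ is symmetric) rather than a Cauchy--Schwarz bound through $\norm{M^2}\ff$, which would lose a factor and spoil the constant.
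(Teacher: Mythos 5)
Your proposal is correct and follows essentially the same route as the paper: both test the stationarity condition against the direction $X^*\bigl((X^*)\tp X^* - I_p\bigr)$, invoke Lemma~\ref{Le_inner_g_X} to reduce the $\nabla g$ term to $-\tfrac{3}{2}\inner{M^2,S}$, and bound $\norm{S}_2$ by $\tfrac{13}{12}M_1$ using $X^*\in\Omega$. The paper phrases it as a contradiction from $\sigma_{\min}(X^*) > \sqrt{2M_1/\beta}$ (keeping the estimate as $\inner{\beta (X^*)\tp X^* - 2M_1 I_p, M^2}\geq 0$) while you derive the singular-value bound directly by dividing by $\norm{M}\fs$, but this is only a cosmetic reorganization of the identical argument.
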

\begin{proof}
	Suppose $\sigma_{\min}(X^*) >  \sqrt{\frac{2M_1}{\beta}}$, then $\beta {X^*}\tp {X^*} - 2M_1 I_p$ is positive definite. 
	Besides, from Lemma \ref{Le_inner_g_X} we achieve
	\begin{equation*}
		\begin{aligned}
			0 ={}& \inner{\nabla h({X^*}), {X^*}({X^*}\tp {X^*} - I_p)}\\
			\geq {}&   \inner{\beta {X^*}({X^*}\tp {X^*} - I_p),{X^*}({X^*}\tp {X^*} - I_p) } - \left| \inner{\nabla g({X^*}), {X^*}({X^*}\tp {X^*} - I_p)} \right| \\
			\geq{}& \inner{\beta {X^*}({X^*}\tp {X^*} - I_p),{X^*}({X^*}\tp {X^*} - I_p) } - \frac{3}{2} \norm{{X^*}\tp G(X^*)}_2 \tr\left( \left({X^*}\tp X^* - I_p\right)^2 \right)  \\
			\geq{}& \inner{\beta {X^*}({X^*}\tp {X^*} - I_p),{X^*}({X^*}\tp {X^*} - I_p) } - \inner{\frac{3}{2} \norm{X^*}_2\norm{G(X^*)}\ff \cdot I_p, ({X^*}\tp {X^*} - I_p)^2} \\
			\geq{}& \inner{\beta {X^*}\tp {X^*} - 2M_1 I_p, ({X^*}\tp {X^*} - I_p)^2}  \geq 0,
		\end{aligned}
	\end{equation*}
	which illustrates that ${X^*}\tp X^* = I_p$. Then we can conclude that $0 = \nabla h(X^*) = \grad f(X^*)$ and thus complete the proof. 
\end{proof}

As illustrated in Theorem \ref{The_Equivalence_local}, any first-order stationary point of \ref{Prob_Pen} in $\Omega$ is either a first-order stationary point of \eqref{Prob_Ori}, or is  far from the Stiefel manifold. The following theorem illustrates that any infeasible first-order stationary point of \ref{Prob_Pen} cannot be a second-order stationary point of $h(X)$.

\begin{theo}
	\label{The_strict_saddle}
	Suppose Assumption \ref{Assumption_second_order} holds, $\beta \geq \bar{\beta}$, then any infeasible first-order stationary point $\tX$ of \ref{Prob_Pen} in $\Omega$ is not a second-order stationary point of \ref{Prob_Pen}. More specifically, $\lambda_{\min}(\nabla^2 h(\tX)) \leq -\frac{\beta}{24}$. 
\end{theo}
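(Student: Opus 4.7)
The plan is to exhibit an explicit rank-one direction along which $\nabla^2 h(\tX)$ is strongly negative. First, I would invoke Theorem \ref{The_Equivalence_local} on the infeasible first-order stationary point $\tX \in \Omega$: since $\tX$ is not feasible, the dichotomy forces $\sigma := \sigma_{\min}(\tX) \leq \sqrt{2M_1/\beta}$. Let $u \in \bb{R}^n$ and $v \in \bb{R}^p$ be unit left and right singular vectors of $\tX$ associated with $\sigma$, so that $\tX v = \sigma u$ and $v\tp \tX\tp \tX = \sigma^2 v\tp$. Set the test direction
\[
D := u v\tp, \qquad \norm{D}\ff = 1.
\]

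Next I would plug $D$ into the Hessian formula of Proposition \ref{Prop_Hessian} and isolate the two contributions $\inner{D, \nabla^2 g(\tX)[D]}$ and the penalty term $\beta(2 \tX \Phi(\tX\tp D) + D(\tX\tp \tX - I_p))$. A direct computation from the relations above gives $\Phi(\tX\tp D) = \sigma v v\tp$, hence $\tX \Phi(\tX\tp D) = \sigma^2 D$, and similarly $D(\tX\tp \tX - I_p) = (\sigma^2 - 1) D$. Using $\norm{D}\fs = 1$, the penalty contribution collapses to
\[
\inner{D, \beta\bigl(2\tX \Phi(\tX\tp D) + D(\tX\tp \tX - I_p)\bigr)} = \beta(3\sigma^2 - 1).
\]
Since $\beta \geq \bar\beta \geq 12 M_1$ gives $\sigma^2 \leq 2M_1/\beta \leq 1/6$, this contribution is at most $-\beta/2$.

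For the smooth part $\inner{D, \nabla^2 g(\tX)[D]}$ I would avoid unpacking the four terms in Proposition \ref{Prop_Hessian} and instead use that $\Omega$ is convex and $\nabla g$ is $M_2$-Lipschitz on $\Omega$, so the operator norm of $\nabla^2 g(\tX)$ is at most $M_2$; hence $|\inner{D, \nabla^2 g(\tX)[D]}| \leq M_2 \leq \beta/6$ by $\beta \geq \bar\beta \geq 6M_2$. Combining the two estimates yields
\[
\inner{D, \nabla^2 h(\tX)[D]} \leq -\frac{\beta}{2} + \frac{\beta}{6} = -\frac{\beta}{3},
\]
which, together with $\norm{D}\ff = 1$, immediately implies $\lambda_{\min}(\nabla^2 h(\tX)) \leq -\beta/3 \leq -\beta/24$.

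The main obstacle is a bookkeeping one rather than a conceptual one: all three objects $\Phi(\tX\tp D)$, $\tX\Phi(\tX\tp D)$, and $D(\tX\tp\tX - I_p)$ must be read off cleanly from the SVD so that the four-term penalty expression telescopes to $\beta(3\sigma^2 - 1)$. The choice $D = uv\tp$ with $(u,v)$ attached to the \emph{smallest} singular value is what makes the $-1$ in $(\sigma^2 - 1)$ dominant; had we picked a general direction the penalty term could be positive. The Lipschitz-to-Hessian operator norm passage on the convex set $\Omega$ is standard and does not require Assumption \ref{Assumption_global_f}, so the argument uses only Assumption \ref{Assumption_second_order} as stated.
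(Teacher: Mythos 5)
Your proof is correct and follows essentially the same route as the paper's: both invoke Theorem \ref{The_Equivalence_local} to get $\sigma_{\min}(\tX)^2 \leq 2M_1/\beta \leq 1/6$, test the rank-one direction $uv\tp$ attached to the smallest singular value, and bound the smooth part by $M_2 \leq \beta/6$ — the only cosmetic difference is that you evaluate the quadratic form via the explicit Hessian formula of Proposition \ref{Prop_Hessian} while the paper expands $h(\tX+tD)$ to second order in $t$. Your bookkeeping even yields the slightly sharper bound $\lambda_{\min}(\nabla^2 h(\tX)) \leq -\beta/3$, which of course implies the stated $-\beta/24$.
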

\begin{proof}
	Suppose the statement is not true, namely, $\tX$ is a second-order stationary point of \ref{Prob_Ori}. 
	Since $\beta \geq 12M_1$, it holds that 
	$\sigma_{\min}(\tX\tp \tX) \leq \frac{1}{6}$ by Proposition \ref{The_Equivalence_local}.
	Let $\tX = U\Sigma V\tp$ be the singular value decomposition of $\tX$,
	namely, $U\in \bb{R}^{n\times p}$ and $V\in \bb{R}^{n\times p}$ are the orthogonal matrices and $\Sigma$
	is a diagonal matrix with singular values of $\tX$ on its diagonal. 
	Without loss of generality, we assume $\sigma_1\leq \frac{1}{\sqrt{6}}$ 
	which is the first entry 
	of the diagonal matrix $\Sigma$. 
	
	Then we denote 
	$D=-u_1v_1\tp $, where 
	$u_1$ and $v_1$ are the first columns of $U$ and $V$, respectively.
	It holds that 
	\begin{equation*}
		(\tX + tD)\tp(\tX+ tD) =  \tX\tp \tX  + 2tD\tp \tX+ t^2D\tp D 
		= V\tp \Sigma^2 V -2t\sigma_1 v_1v_1\tp + t^2 v_1v_1\tp.
	\end{equation*}
	Due to the first-order stationarity of $\tX$, it holds $\nabla h(\tX) = 0$
	which implies $D\tp \nabla h(\tX) = 0$. 
	First, we have
	\begin{equation*}
		\begin{aligned}
			&\norm{(\tX + tD)\tp(\tX+ tD) - I_p}\fs = \norm{\tX\tp \tX - I_p  + 2t\Phi(\tX\tp D) + t^2D\tp D }\fs \\
			\geq{}& \norm{\tX\tp\tX - I_p}\fs + 4t^2 \inner{\Phi(\tX\tp D), \Phi(\tX\tp D)} + 2t^2 \inner{D\tp D, \tX\tp\tX - I_p} + \ca{O}(t^3)\\
			\geq{}& \norm{\tX\tp\tX - I_p}\fs + 4t^2 \sigma_1^2 - 2t^2(1-\sigma_1^2) + \ca{O}(t^3) 
			\geq{} \norm{\tX\tp\tX - I_p}\fs - t^2 + \ca{O}(t^3).
		\end{aligned}
	\end{equation*}
	As a result, 
	\begin{equation*}
		\begin{aligned}
			& h(\tX + tD) 
			\leq{} h(\tX) + t\cdot \inner{ D, \nabla h(\tX) }+ \frac{t^2}{2} \norm{\nabla^2 g(\tX)}\ff\norm{D}\fs
			- \frac{\beta}{8}t^2  + \ca{O}(t^3)\\
			\leq{} & h(\tX) + \frac{t^2}{2} M_2 - \frac{\beta t^2}{8} + \ca{O}(t^3)
			\leq{}  h(\tX)
			-\frac{t^2 \beta }{24} +\ca{O}(t^3),
		\end{aligned}
	\end{equation*}
	which contradicts to the second-order optimality of \ref{Prob_Pen}. Therefore, we can conclude that any infeasible first-order stationary point of \ref{Prob_Pen} in $\Omega$ is not a second-order stationary point of \ref{Prob_Pen}, and $\lambda_{\min}(\nabla^2 h(\tX)) \leq -\frac{\beta}{24}$.
\end{proof}

Combining Proposition \ref{Prop_uniformly_bounded} with Theorem \ref{The_Equivalence_local}, we arrive at the following
corollary. 
\begin{coro}
	\label{Coro_Equivalence}
	Suppose Assumptions \ref{Assumption_global_f} and \ref{Assumption_second_order} hold, and $\beta \geq \hat{\beta}$. Let $X^*$ be a first-order stationary point of \ref{Prob_Pen}, then either $X^*$ is a first-order stationary point of \ref{Prob_Ori}, or is far from the Stiefel manifold and can not be a second-order stationary point of \ref{Prob_Pen}.
\end{coro}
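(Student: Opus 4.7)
The plan is to assemble the corollary from the three ingredients already proved in the section: Proposition \ref{Prop_uniformly_bounded}, Theorem \ref{The_Equivalence_local}, and Theorem \ref{The_strict_saddle}. The first step is to observe a compatibility between the penalty thresholds: since $\hat{M}_1 \geq M_1$ and $\hat{M}_2 \geq M_2$ (a consequence of Assumption \ref{Assumption_global_f}, as noted right after the definition of $\hat{\beta}$), we automatically have $\hat\beta \geq \bar\beta$. Hence any $\beta$ satisfying the hypothesis $\beta \geq \hat\beta$ simultaneously meets the thresholds required by Theorems \ref{The_Equivalence_local} and \ref{The_strict_saddle}.

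Next I would invoke Proposition \ref{Prop_uniformly_bounded}: under Assumption \ref{Assumption_global_f} and $\beta \geq \hat\beta \geq \hat{M}_1 \cdot 12$, every FOSP of \ref{Prob_Pen} lies in $\Omega$. So without loss of generality $X^* \in \Omega$, and we may apply Theorem \ref{The_Equivalence_local}. This yields the basic dichotomy: either $X^*$ is feasible (and hence is a FOSP of \ref{Prob_Ori}), or $\sigma_{\min}(X^*) \leq \sqrt{2M_1/\beta}$. In the latter case $X^*$ is genuinely ``far from $\ca{S}_{n,p}$'' in the sense that at least one singular value is bounded away from $1$, so that $\norm{{X^*}\tp X^* - I_p}\ff$ is bounded below by a positive quantity depending only on $\beta$ and $M_1$.

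Finally, to complete the second alternative I would apply Theorem \ref{The_strict_saddle} directly: it asserts that under Assumption \ref{Assumption_second_order} and for $\beta \geq \bar\beta$ (which holds), any infeasible FOSP of \ref{Prob_Pen} that lies in $\Omega$ satisfies $\lambda_{\min}(\nabla^2 h(X^*)) \leq -\beta/24 < 0$, and so fails the second-order condition \eqref{Eq_SOSP_h}. Chaining the three observations produces exactly the statement of the corollary.

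There is essentially no hard step here; the argument is a bookkeeping exercise of checking that the hypotheses of the previously proved results are simultaneously in force. The only subtlety worth flagging is the implicit inequality $\hat\beta \geq \bar\beta$, which is what allows a single threshold $\beta \geq \hat\beta$ to unlock all three prerequisites (containment in $\Omega$, the feasibility/far-from-Stiefel dichotomy, and the strict saddle property). Once that is recorded, the proof is a three-line citation.
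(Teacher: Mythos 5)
Your argument is correct and is exactly the combination the paper has in mind (the paper omits the proof, citing Proposition \ref{Prop_uniformly_bounded} and Theorems \ref{The_Equivalence_local} and \ref{The_strict_saddle} as the ingredients, as also recorded in Figure \ref{Fig_roadmap_FOSP}). Your explicit observation that $\hat{\beta} \geq \bar{\beta}$, which follows from $\hat{M}_1 \geq M_1$ and $\hat{M}_2 \geq M_2$, is the one bookkeeping detail the paper leaves implicit, and you have handled it correctly.
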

The proof of this corollary is straightforward and hence omitted.

\subsection{Second-order relationship}
In this subsection,  we study the first-order relationship between OCP and \Expen.  
The main theoretical results of this subsection can be summarized in Figure \ref{Fig_roadmap_SOSP}. 

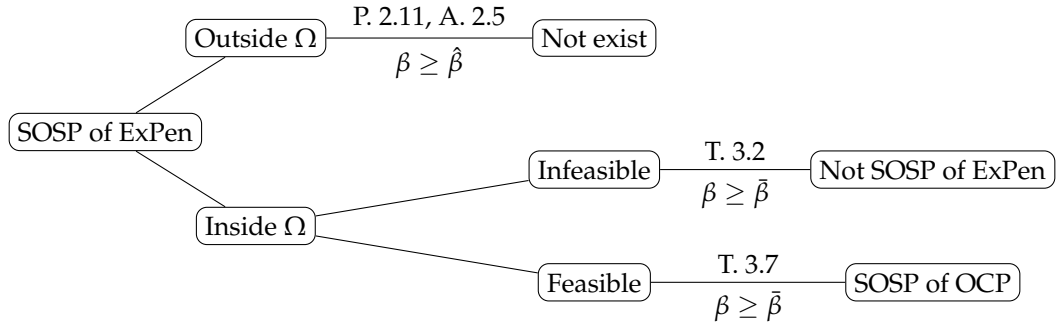
\begin{figure}[h!]
	\tikzstyle{level 1}=[level distance=2cm, sibling distance=2.5cm]
	\tikzstyle{level 2}=[level distance=4.5cm, sibling distance=1.5cm]
	\tikzstyle{level 3}=[level distance=4.5cm, sibling distance=1.5cm]
	
	\tikzstyle{smallbag} = [text width=8em, shape=rectangle, rounded corners,draw, align=center]
	\tikzstyle{bag} = [ shape=rectangle, rounded corners,draw, align=center]
	\tikzstyle{end} = [circle, minimum width=3pt,fill, inner sep=0pt]
	\begin{tikzpicture}[grow=right]
		\node[bag] { SOSP of \ref{Prob_Pen}}
		child {
			node[bag] {Inside $\Omega$}        
			child {
				node[bag] {Feasible}
				child {
					node[bag] {SOSP of \ref{Prob_Ori}}
					edge from parent
					node[above] {T. \ref{The_Equivalence_local_second_order}}
					node[below] {$\beta \geq \bar{\beta}$}
				}
				edge from parent
			}
			child {
				node[bag] {Infeasible}
				child {
					node[bag] {Not SOSP of \ref{Prob_Pen}}
					edge from parent
					node[above] {T. \ref{The_strict_saddle}}
					node[below] {$\beta \geq \bar{\beta}$}
				}
				edge from parent
			}
			edge from parent 
		}
		child {
			node[bag] {Outside $\Omega$}        
			child {
				node[bag] {Not exist}
				edge from parent
				node[above] {P. \ref{Prop_uniformly_bounded}, A. \ref{Assumption_global_f}}
				node[below] {$\beta \geq \hat{\beta}$}
			}
			edge from parent         
		};
	\end{tikzpicture}
	\caption{Roadmap of the second-order relationship between \ref{Prob_Ori} and \ref{Prob_Pen} under Assumption \ref{Assumption_second_order}. 
	}
	\label{Fig_roadmap_SOSP}
\end{figure}

We first analyze the relationship between Riemannian hessian of the original objective function $f$ and the Euclidean hessian of the penalty function $h$. 

\begin{lem}
	\label{Lem_Hessian_tangent}
	Suppose $f(X)$ satisfies Assumption \ref{Assumption_second_order}. Then for any given $X \in \ca{S}_{n,p}$ and any $D_1 \in \ca{T}_X$, the following equality holds,
	\begin{equation}
		\inner{D_1, \nabla^2 h(X)[D_1]}  =  \inner{D_1, \nabla^2 f(X)[D_1] - D_1 \Phi(X\tp \nabla f(X))}.
	\end{equation}
\end{lem}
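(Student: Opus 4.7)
The plan is to reduce $\nabla^2 h(X)[D_1]$ using the formula from Proposition \ref{Prop_Hessian} together with the simplifications afforded by $X\in\ca{S}_{n,p}$ and $D_1\in\ca{T}_X$, and then observe that several of the surviving terms are annihilated when paired with $D_1$ in the inner product.

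First I would specialize every ingredient to the feasible point $X$. Since $X\tp X=I_p$, we have $\Apen{X}=I_p$, so $X\Apen{X}=X$, which gives $G(X)=\nabla f(X)$ and $\ca{H}(X)=\nabla^2 f(X)$. Moreover, for $D_1\in\ca{T}_X$ the definition of the tangent space yields $\Phi(X\tp D_1)=0=\Phi(D_1\tp X)$, so $\PT(D_1)=D_1\Apen{X}-X\Phi(D_1\tp X)=D_1$. The penalty part $\beta(2X\Phi(X\tp D_1)+D_1(X\tp X-I_p))$ from Proposition \ref{Prop_Hessian} therefore vanishes, and the expression collapses to
\begin{equation*}
\nabla^2 h(X)[D_1]=\PT\!\left(\nabla^2 f(X)[D_1]\right)-D_1\Phi(X\tp\nabla f(X))-X\Phi(D_1\tp\nabla f(X))-\nabla f(X)\Phi(D_1\tp X),
\end{equation*}
where the last term is already zero because $\Phi(D_1\tp X)=0$.

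Next I would expand $\PT\!\left(\nabla^2 f(X)[D_1]\right)=\nabla^2 f(X)[D_1]-X\Phi\!\left((\nabla^2 f(X)[D_1])\tp X\right)$ and pair the whole thing with $D_1$. The key observation is that for any symmetric $p\times p$ matrix $S$,
\begin{equation*}
\inner{D_1,\, XS}=\tr(S X\tp D_1)=\inner{\Phi(X\tp D_1),\, S}=0,
\end{equation*}
because $\Phi(X\tp D_1)=0$ on the tangent space. Applying this with $S=\Phi\!\left((\nabla^2 f(X)[D_1])\tp X\right)$ kills the $\PT$ correction, and applying it with $S=\Phi(D_1\tp\nabla f(X))$ kills the term $X\Phi(D_1\tp\nabla f(X))$. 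What remains is precisely
\begin{equation*}
\inner{D_1,\nabla^2 h(X)[D_1]}=\inner{D_1,\nabla^2 f(X)[D_1]}-\inner{D_1,D_1\Phi(X\tp\nabla f(X))},
\end{equation*}
which is the claimed identity.

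This is essentially a careful bookkeeping exercise; the only potential pitfall is making sure each use of symmetry of $\Phi(\cdot)$ is matched with the tangent-space identity $\Phi(X\tp D_1)=0$ in the correct slot of the trace. Since $\PT$ is self-adjoint (already recorded in the proof of Proposition \ref{Le_gradient_h}), one could alternatively compute $\inner{D_1,\PT(\nabla^2 f(X)[D_1])}=\inner{\PT(D_1),\nabla^2 f(X)[D_1]}=\inner{D_1,\nabla^2 f(X)[D_1]}$ in one shot, which gives a slightly cleaner route but relies on the same observation.
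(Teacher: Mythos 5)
Your proposal is correct and follows essentially the same route as the paper: specialize Proposition \ref{Prop_Hessian} at a feasible point, note that $\PT(D_1)=D_1$ and the penalty and $\Phi(D_1\tp X)$ terms vanish on $\ca{T}_X$, and then kill the remaining $X\Phi(\cdot)$ corrections by pairing with $D_1$ via $\inner{D_1,XS}=\inner{\Phi(X\tp D_1),S}=0$ (equivalently, the self-adjointness of $\PT$, which is exactly the shortcut the paper uses). No gaps.
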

\begin{proof}
	Since $D_1 \in \ca{T}_X$, by the definition of $\ca{T}_X$ we have $\Phi(D_1\tp X) = 0$. 
	Moreover, the definition of $\PT$ indicates that $\PT(D_1) = D_1$.
	As a result,  from Proposition \ref{Prop_Hessian} we can conclude that
	\begin{equation}
		\label{Eq_Le_tangent_hessian}
		\begin{aligned}
			&\nabla^2 h(X) [D_1] = \nabla^2 g(X) [D_1] \\
			={}& \PT  \left( \ca{H}(X)[\PT(D_1)]\right)- D_1 \Phi(X\tp \nabla f(X)) - \nabla f(X) \Phi(X\tp D_1) - X \Phi(D_1\tp \nabla f(X))\\
			={}& \PT\left(\nabla^2 f(X)[D_1]\right) - D_1 \Phi(X\tp \nabla f(X)) - X\Phi(D_1\tp \nabla f(X)).
		\end{aligned}
	\end{equation}
	Therefore, for any $D_1 \in \ca{T}_X$, we have
	\begin{equation*}
		\begin{aligned}
			&\inner{D_1, \nabla^2 h(X)[D_1]} =  \inner{D_1, \nabla^2 f(X)[D_1] - D_1 \Phi(X\tp \nabla f(X))} - \inner{\Phi(D_1\tp X), \Phi(D_1\tp \nabla f(X)) } \\
			={}& \inner{D_1, \nabla^2 f(X)[D_1] - D_1 \Phi(X\tp \nabla f(X))} .
		\end{aligned}
	\end{equation*}
\end{proof}

\begin{lem}
	\label{Le_tangent_hessian}
	Suppose $f(X)$ satisfies Assumption \ref{Assumption_second_order} and $X \in \ca{S}_{n,p}$ is a first-order stationary point of $h(X)$. Then for any $D_1 \in \ca{T}_X$ and any $D_2 \in \ca{N}_X$, we have 
	\begin{align}
		& \inner{D_2, \nabla^2 h(X)[D_2]} \geq (2\beta - M_2)\norm{D_2}\fs; \label{Eq_normal_hessian}\\
		& \inner{D_1,\nabla^2 h(X)[D_2] } = 0. \label{Eq_cross_hessian}
	\end{align}
\end{lem}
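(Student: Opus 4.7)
The plan is to leverage Proposition \ref{Prop_Hessian} to expand $\nabla^2 h(X)[D]$ at a feasible first-order stationary point $X$ and exploit $X\tp X = I_p$ together with the first-order stationarity $\nabla f(X) = X \Phi(X\tp \nabla f(X))$. When $X \in \ca{S}_{n,p}$, the term $\beta D(X\tp X - I_p)$ vanishes, so the formula collapses to
\[
\nabla^2 h(X)[D] = \nabla^2 g(X)[D] + 2\beta X \Phi(X\tp D).
\]
I will handle \eqref{Eq_normal_hessian} first by writing $D_2 = X\Lambda$ with $\Lambda = \Lambda\tp$. Then $X\tp D_2 = \Lambda$, $\Phi(X\tp D_2) = \Lambda$, and hence $X\Phi(X\tp D_2) = D_2$, which yields $\inner{D_2, 2\beta X\Phi(X\tp D_2)} = 2\beta \norm{D_2}\fs$. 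The remaining piece $\inner{D_2, \nabla^2 g(X)[D_2]}$ is controlled by $M_2$: since $\nabla g$ is $M_2$-Lipschitz on $\Omega$ and $X \in \ca{S}_{n,p} \subset \inte \Omega$, the operator norm of $\nabla^2 g(X)$ is at most $M_2$, giving $\inner{D_2, \nabla^2 g(X)[D_2]} \geq -M_2 \norm{D_2}\fs$. Adding the two contributions produces the advertised $(2\beta - M_2)\norm{D_2}\fs$ lower bound.

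For \eqref{Eq_cross_hessian} I would show that $\nabla^2 h(X)[D_2] \in \ca{N}_X$ directly; orthogonality of $\ca{T}_X$ and $\ca{N}_X$ under the Frobenius inner product (a quick check using the skew-symmetry of $X\tp D_1$ for $D_1 \in \ca{T}_X$ and the symmetry of $\Lambda$) then forces $\inner{D_1, \nabla^2 h(X)[D_2]} = 0$. To prove the membership, observe that $\PT(D_2) = D_2 - X\Phi(X\tp D_2) = X\Lambda - X\Lambda = 0$, so the $\PT(\ca{H}(X)[\PT(D_2)])$ term in Proposition \ref{Prop_Hessian} disappears. Because $X \in \ca{S}_{n,p}$, we have $G(X) = \nabla f(X)$, and by first-order stationarity $\nabla f(X) = XS$ with $S := \Phi(X\tp \nabla f(X))$ symmetric. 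Substituting into the three remaining terms of Proposition \ref{Prop_Hessian} and using $\Phi(\Lambda S) = \tfrac{1}{2}(\Lambda S + S\Lambda)$, I expect the expression to collapse to $\nabla^2 g(X)[D_2] = -\tfrac{3}{2} X(\Lambda S + S \Lambda)$, which lies in $\ca{N}_X$. Adding $2\beta X\Phi(X\tp D_2) = 2\beta X\Lambda \in \ca{N}_X$ keeps $\nabla^2 h(X)[D_2]$ in $\ca{N}_X$, finishing the argument.

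The only real obstacle is the bookkeeping in the second step: three different terms from Proposition \ref{Prop_Hessian} have to be combined, each involves $\Phi$ applied to a product of symmetric matrices, and the coefficients must line up precisely so that the resulting expression factors as $X$ times a symmetric matrix. The stationarity identity $\nabla f(X) = XS$ is essential here, because without it the term $G(X)\Phi(D_2\tp X) = \nabla f(X)\Lambda$ would not admit a leading $X$ factor, and the claim $\nabla^2 h(X)[D_2] \in \ca{N}_X$ would fail. Once the algebra is done, both parts of the lemma drop out cleanly.
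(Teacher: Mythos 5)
Your proof is correct, and the part for \eqref{Eq_normal_hessian} coincides with the paper's argument: on $\ca{S}_{n,p}$ the term $\beta D(X\tp X-I_p)$ drops, $\inner{D_2,2\beta X\Phi(X\tp D_2)}=2\beta\norm{D_2}\fs$ for $D_2=X\Lambda$, and the $\nabla^2 g$ contribution is absorbed by the Lipschitz constant $M_2$. For \eqref{Eq_cross_hessian} you take a genuinely different route. The paper evaluates the bilinear form the other way around: it applies the Hessian to $D_1\in\ca{T}_X$ (where $\PT(D_1)=D_1$), pairs with $D_2=X\Lambda_2$, kills the $\PT(\ca{H}(X)[\PT(D_1)])$ term by self-adjointness of $\PT$ together with $\PT(D_2)=0$, and then shows the two remaining traces cancel because $\tr$ of a symmetric matrix against a skew-symmetric matrix vanishes (using $X\tp D_1=-D_1\tp X$ and $T-\Phi(T)$ skew). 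You instead apply the Hessian to $D_2$, use $\PT(D_2)=0$ and the stationarity identity $\nabla f(X)=XS$ with $S=\Phi(X\tp\nabla f(X))$ to collapse $\nabla^2 g(X)[D_2]$ to $-\tfrac32 X(\Lambda S+S\Lambda)\in\ca{N}_X$ (the algebra does check out: the three terms contribute $X\Lambda S$, $\tfrac12 X(\Lambda S+S\Lambda)$ and $XS\Lambda$), and conclude by $\ca{T}_X\perp\ca{N}_X$. Both arguments rely on first-order stationarity; yours buys the stronger structural fact that $\nabla^2 h(X)$ maps $\ca{N}_X$ into $\ca{N}_X$, which is exactly the invariance the paper later needs in Theorem \ref{The_Eig_Correspondence}, so your route makes that subsequent step immediate, at the cost of slightly heavier bookkeeping with $\Phi$ applied to products of symmetric matrices.
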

\begin{proof}
	Since $D_1 \in \ca{T}_X$, by the definition of $\ca{T}_X$ we have $\Phi(D_1\tp X) = 0$. 
	Besides, the definition of $\PT$ indicates that $\PT(D_1) = D_1$.  Since $D_1 \in \ca{T}_X$ and $D_2 \in \ca{N}_X$, we have $\PT(D_2) = 0$. Besides, there exists a symmetric matrix $\Lambda_2 \in \bb{R}^{p\times p}$  such that $D_2 = X\Lambda_2$. Then we have
	\begin{equation*}
		\begin{aligned}
			&\inner{D_2,\nabla^2 h(X)[D_1] } = \inner{D_2, \PT(\nabla^2 f(X)[\PT(D_1)]) - D_1 \Phi(X\tp \nabla f(X)) - X\Phi(D_1\tp \nabla f(X))}\\
			={}& - \inner{D_2, D_1 \Phi(X\tp \nabla f(X))} - \inner{D_2,  X\Phi(D_1\tp \nabla f(X))}\\
			={}& -\tr\left( \Lambda_2 X\tp D_1 X\tp \nabla f(X) \right) - \tr\left( \Lambda_2 \Phi(D_1\tp X X\tp \nabla f(X)) \right)\\
			\overset{(i)}{=}& \tr\left( \Lambda_2  D_1\tp X X\tp \nabla f(X) \right) - \tr\left( \Lambda_2 \Phi(D_1\tp X X\tp \nabla f(X)) \right)\\
			\overset{(ii)}{=}& \tr\left( \Lambda_2 \left[  D_1\tp X X\tp \nabla f(X) - \Phi( D_1\tp X X\tp \nabla f(X) ) \right] \right)
			={} 0.
		\end{aligned}
	\end{equation*}
	Here $(i)$ follows the fact that $D_1\tp X$ is skew-symmetric, and $(ii)$ directly uses the fact that for any $T \in \bb{R}^{p\times p}$, $T - \Phi(T) $ is skew-symmetric.

	Moreover, notice that  $D_2 \in \ca{N}_X$  implies that $\norm{\Phi(D_2\tp X)}\ff = \norm{D}\ff$. Then by the expression of $\nabla^2 h(X)$ in Proposition \ref{Prop_Hessian}, we can conclude that 
	\begin{equation*}
		\inner{D_2, \nabla^2 h(X)[D_2]} = \inner{D_2, \nabla^2 g(X)[D_2]} + 2\beta \norm{\Phi(D_2\tp X)}\fs \geq (2\beta - M_2)\norm{D_2}\fs,
	\end{equation*}
	which completes the proof. 
\end{proof}

\begin{theo}
	\label{The_Eig_Correspondence}
	Suppose $f(X)$ satisfies Assumption \ref{Assumption_second_order}, for any first-order stationary point $X \in \ca{S}_{n,p}$ of \ref{Prob_Ori}, any eigenvalue of $\hess f(X)$ is an eigenvalue of $\nabla^2 h(X)$. In turn, any eigenvalue of $\nabla^2 h(X)$ is either an eigenvalue of  $\hess f(X)$, or greater than $2\beta - M_2$. 
\end{theo}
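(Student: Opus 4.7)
The plan is to exploit the orthogonal decomposition $\bb{R}^{n\times p} = \ca{T}_X \oplus \ca{N}_X$ and show that, at a feasible first-order stationary point, the symmetric operator $\nabla^2 h(X)$ is block-diagonal with respect to this decomposition. Once that block structure is established, the two claimed statements just read off the two blocks.

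First I would verify that $\ca{T}_X$ and $\ca{N}_X$ are both invariant subspaces of $\nabla^2 h(X)$. Lemma~\ref{Le_tangent_hessian} already supplies the crucial cross-term identity $\inner{D_2, \nabla^2 h(X)[D_1]} = 0$ for every $D_1\in\ca{T}_X$ and $D_2\in\ca{N}_X$. Since $\ca{T}_X$ and $\ca{N}_X$ are orthogonal complements in $\bb{R}^{n\times p}$, this forces $\nabla^2 h(X)[D_1]\in\ca{T}_X$. Using that $\nabla^2 h(X)$ is symmetric (because $h$ is $C^2$ under Assumption~\ref{Assumption_second_order}), the same identity rewritten gives $\inner{D_1,\nabla^2 h(X)[D_2]}=0$ for all $D_1\in\ca{T}_X$, so $\nabla^2 h(X)[D_2]\in\ca{N}_X$ as well. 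Therefore $\nabla^2 h(X)$ decomposes as $H_{\ca{T}}\oplus H_{\ca{N}}$ where $H_{\ca{T}}$ and $H_{\ca{N}}$ are the self-adjoint restrictions to $\ca{T}_X$ and $\ca{N}_X$ respectively, and the spectrum of $\nabla^2 h(X)$ is the disjoint union of the spectra of these two blocks.

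Next I would identify $H_{\ca{T}}$ with $\hess f(X)$. Lemma~\ref{Lem_Hessian_tangent} shows that for every $D_1\in\ca{T}_X$ the quadratic forms agree: $\inner{D_1,\nabla^2 h(X)[D_1]} = \inner{D_1,\hess f(X)[D_1]}$. Since both $H_{\ca{T}}$ and $\hess f(X)$ are self-adjoint maps $\ca{T}_X\to\ca{T}_X$, the equality of their quadratic forms (by polarization) promotes to equality of operators on $\ca{T}_X$. Hence the eigenvalues of $H_{\ca{T}}$ are exactly the eigenvalues of $\hess f(X)$, which proves the forward direction: every eigenvalue of $\hess f(X)$ appears in the spectrum of $\nabla^2 h(X)$.

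For the converse, I would bound $H_{\ca{N}}$ from below using the other half of Lemma~\ref{Le_tangent_hessian}, namely $\inner{D_2,\nabla^2 h(X)[D_2]}\geq (2\beta-M_2)\|D_2\|\fs$ for all $D_2\in\ca{N}_X$. This immediately yields $\lambda_{\min}(H_{\ca{N}})\geq 2\beta-M_2$, so every eigenvalue of $\nabla^2 h(X)$ that comes from the $\ca{N}_X$-block exceeds $2\beta-M_2$, and every other eigenvalue comes from the $\ca{T}_X$-block and is therefore an eigenvalue of $\hess f(X)$. There is no genuine obstacle beyond being careful about one point: the cross-vanishing $\inner{D_2,\nabla^2 h(X)[D_1]}=0$ must be combined with symmetry of $\nabla^2 h(X)$ to get invariance of both subspaces (the lemma only states one side directly), but this is immediate once symmetry is invoked.
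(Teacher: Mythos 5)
Your proposal is correct and follows essentially the same route as the paper: both rely on the orthogonal splitting $\bb{R}^{n\times p}=\ca{T}_X\oplus\ca{N}_X$, use Lemma \ref{Lem_Hessian_tangent} to identify the tangent block with $\hess f(X)$, and use Lemma \ref{Le_tangent_hessian} to kill the cross terms and bound the normal block below by $2\beta-M_2$. The only cosmetic difference is that you recover the operator identity on $\ca{T}_X$ by polarization of the quadratic forms, whereas the paper invokes the operator-level formula from the proof of Lemma \ref{Lem_Hessian_tangent} directly.
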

\begin{proof}
	Let $\sigma_1$ be an eigenvalue of $\hess f(X)$, it follows from Definition \ref{Defin_SOSP} that there exists an $\hat{D}_1 \in \ca{T}_X$ such that 
	\begin{equation*}
		\PT\left(\nabla^2 f(X) [\hat{D}_1] - \hat{D}_1 \Phi(X\tp \nabla f(X))\right) = \sigma D_1.  
	\end{equation*} 
	In addition, Lemma \ref{Lem_Hessian_tangent} indicates that
	\begin{equation*}
		\nabla^2 h(X)[\hat{D}_1] = \PT(\nabla^2 f(X) [\hat{D}_1] - \hat{D}_1 \Phi(X\tp \nabla f(X))) = \sigma D_1.
	\end{equation*}
	Therefore, any eigenvalue of $\hess f(X)$ is an eigenvalue of $\nabla^2 h(X)$. 
	
	On the other hand, Lemma \ref{Lem_Hessian_tangent} and Lemma \ref{Le_tangent_hessian} verify that the linear operator $\nabla ^2 h(X)$ maps a vector in $\ca{T}_X$ or $\ca{N}_X$ to $\ca{T}_X$ or $\ca{N}_X$, respectively. Then any eigenvector of $\nabla^2 h(X)$ is  either in $\ca{T}_X$ or $\ca{N}_X$. For any $D_2 \in \ca{N}_X$, \eqref{Eq_normal_hessian} implies that 
	\begin{equation*}
		\inner{D_2, \nabla^2 h(X)[D_2]} \geq (2\beta - M_2) \norm{D_2}\fs,
	\end{equation*}
	from which we can conclude that any eigenvalue of $\nabla^2 h(X)$ is either an eigenvalue of  $\hess f(X)$, or greater than $2\beta - M_2$.

\end{proof}

Based on Theorem \ref{The_Eig_Correspondence}, we can establish the second-order relationship between OCP and \Expen.

\begin{theo}
	\label{The_Equivalence_local_second_order}
	Suppose $f(X)$ satisfies Assumption \ref{Assumption_second_order} and $\beta \geq \bar{\beta} $, then any second-order stationary point of $h(X)$ in $\Omega$ is a second-order stationary point of \ref{Prob_Ori}. Moreover,  \ref{Prob_Ori} and \ref{Prob_Pen} have exactly the same second-order stationary points in $\Omega$.
\end{theo}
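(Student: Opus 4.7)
The plan is to prove the equivalence in both directions, using Theorem \ref{The_Equivalence_local}, Theorem \ref{The_strict_saddle}, Theorem \ref{The_Eig_Correspondence}, and the block-diagonal structure of $\nabla^2 h$ at feasible first-order stationary points captured by Lemma \ref{Lem_Hessian_tangent} and Lemma \ref{Le_tangent_hessian}.

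For the forward direction (a SOSP of \ref{Prob_Pen} in $\Omega$ is a SOSP of \ref{Prob_Ori}), I would start from the observation that a SOSP of \ref{Prob_Pen} is in particular a FOSP. Theorem \ref{The_Equivalence_local} then forces the dichotomy: either $X^*$ is feasible (and hence a FOSP of \ref{Prob_Ori}), or $\sigma_{\min}(X^*) \leq \sqrt{2M_1/\beta}$. Theorem \ref{The_strict_saddle} rules out the infeasible alternative, because it asserts $\lambda_{\min}(\nabla^2 h(X^*)) \leq -\beta/24 < 0$, contradicting the second-order condition. Thus $X^* \in \ca{S}_{n,p}$ with $\grad f(X^*)=0$. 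To verify the tangent second-order condition, I take any $D \in \ca{T}_{X^*}$ and apply Lemma \ref{Lem_Hessian_tangent} to get $\inner{D,\nabla^2 h(X^*)[D]} = \inner{D,\hess f(X^*)[D]}$; the nonnegativity of the left-hand side transfers to the right-hand side.

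For the reverse direction (a SOSP of \ref{Prob_Ori} in $\Omega$ is a SOSP of \ref{Prob_Pen}), let $X^* \in \ca{S}_{n,p}$ be a SOSP of \ref{Prob_Ori}. Since $X^{*\top}X^* = I_p$, Proposition \ref{Le_gradient_h} gives $\Apen{X^*} = I_p$, $G(X^*) = \nabla f(X^*)$, and the penalty term drops out, so $\nabla h(X^*) = \nabla f(X^*) - X^*\Phi(X^{*\top}\nabla f(X^*)) = \grad f(X^*) = 0$. For the Hessian condition, I decompose an arbitrary $D \in \bb{R}^{n\times p}$ as $D = D_1 + D_2$ with $D_1 \in \ca{T}_{X^*}$ and $D_2 \in \ca{N}_{X^*}$. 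The cross term $\inner{D_1,\nabla^2 h(X^*)[D_2]}$ vanishes by \eqref{Eq_cross_hessian} of Lemma \ref{Le_tangent_hessian}, so $\inner{D,\nabla^2 h(X^*)[D]} = \inner{D_1,\nabla^2 h(X^*)[D_1]} + \inner{D_2,\nabla^2 h(X^*)[D_2]}$. The tangent piece is nonnegative by Lemma \ref{Lem_Hessian_tangent} together with the SOSP assumption on $X^*$ for \ref{Prob_Ori}, and the normal piece satisfies $\inner{D_2,\nabla^2 h(X^*)[D_2]} \geq (2\beta - M_2)\norm{D_2}\fs \geq 0$ by \eqref{Eq_normal_hessian}, using $\beta \geq \bar\beta \geq 6M_2 > M_2/2$. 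This gives the second-order condition for \ref{Prob_Pen}.

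The proof is mostly bookkeeping on top of the prepared lemmas; there is no serious obstacle. The one point that deserves care is ensuring that the Hessian at a feasible FOSP $X^* \in \ca{S}_{n,p}$ respects the tangent-normal splitting: the vanishing of the cross term from Lemma \ref{Le_tangent_hessian} is what makes the decomposition of $\inner{D,\nabla^2 h(X^*)[D]}$ into independent tangent and normal contributions valid, and this is precisely what allows the Riemannian SOSP condition (a statement about $\ca{T}_{X^*}$ only) to be upgraded to the unconstrained SOSP condition on all of $\bb{R}^{n\times p}$ without any loss.
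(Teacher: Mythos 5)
Your proof is correct and follows essentially the same route as the paper's: feasibility of the SOSP via Theorems \ref{The_Equivalence_local} and \ref{The_strict_saddle}, and then the block structure of $\nabla^2 h$ at a feasible stationary point supplied by Lemmas \ref{Lem_Hessian_tangent} and \ref{Le_tangent_hessian}. The only difference is one of packaging: you work directly with the quadratic form and the orthogonal splitting $\bb{R}^{n\times p} = \ca{T}_{X^*} \oplus \ca{N}_{X^*}$, whereas the paper compresses the same information into the eigenvalue correspondence of Theorem \ref{The_Eig_Correspondence} and cites that instead --- the underlying content is identical.
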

\begin{proof}
	Let $X \in \Omega$ be a second-order stationary point of $h(X)$, then all eigenvalues of $\nabla^2 h(X)$ are nonnegative. It follows from Theorems \ref{The_Equivalence_local} and \ref{The_strict_saddle} that $X$ is feasible.  We can further conclude  $X$ is a first-order stationary point of \ref{Prob_Ori} by
	Proposition \ref{Le_gradient_h}. In addition,  Theorem \ref{The_Eig_Correspondence} shows that all eigenvalues of $\hess f(X)$ consist of a subset of the spectra of $\nabla^2 h(X)$, and thus are nonnegative. Namely, $X$ is a second-order stationary point of \ref{Prob_Ori}. 
	
	In turn, let $X \in \ca{S}_{n,p}$ be a second-order stationary point of \ref{Prob_Ori},  naturally, all the eigenvalues of $\hess f(X)$ are nonnegative.
	Then we can immediately obtain that all the eigenvalues of 
	$\nabla^2 h(X)$ are nonnegative
	resulting from Theorem \ref{The_Eig_Correspondence} and the fact that $2\beta\geq M$.  Hence, $X$ is a second-order stationary point of $h(X)$. 
	
\end{proof}

Based on the second-order relationship between OCP and \Expens in $\Omega$ illustrated in Theorem \ref{The_Equivalence_local_second_order}, we can immediately obtain their second-order relationship
in $\bb{R}^{n\times p}$ by utilizing Proposition \ref{Prop_uniformly_bounded}. We omit the proof, since it is quite straightforward.

\begin{coro}
	\label{The_Equivalence_global_second_order}
	Suppose Assumptions \ref{Assumption_global_f} and \ref{Assumption_second_order} hold, and $\beta \geq  \hat{\beta}$, then \ref{Prob_Ori} and \ref{Prob_Pen} share
	the same second-order stationary points.
\end{coro}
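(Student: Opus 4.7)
The plan is to glue together Proposition \ref{Prop_uniformly_bounded} and Theorem \ref{The_Equivalence_local_second_order}: the former confines all first-order stationary points of \ref{Prob_Pen} to $\Omega$ under Assumption \ref{Assumption_global_f}, while the latter already settles the equivalence of second-order stationary points \emph{within} $\Omega$. Since every SOSP is a FOSP, the only thing left to check is that the two strands are compatible, which reduces to two bookkeeping observations.

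First I would note that $\hat{M}_1 \geq M_1$ and $\hat{M}_2 \geq M_2$ hold by construction, so $\hat{\beta} \geq \bar{\beta}$. Consequently the hypothesis $\beta \geq \hat{\beta}$ of the corollary is strictly stronger than $\beta \geq \bar{\beta}$ and activates every result proved in the previous subsection.

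For the forward inclusion, let $X^*$ be an SOSP of \ref{Prob_Pen}. Then $X^*$ is in particular a FOSP of \ref{Prob_Pen}, so Proposition \ref{Prop_uniformly_bounded} places $X^* \in \Omega$. Since $\beta \geq \hat{\beta} \geq \bar{\beta}$, Theorem \ref{The_Equivalence_local_second_order} immediately yields that $X^*$ is an SOSP of \ref{Prob_Ori}. For the reverse inclusion, let $X^* \in \ca{S}_{n,p}$ be an SOSP of \ref{Prob_Ori}. Because $\norm{X^*}_2 = 1 \leq 1 + \frac{1}{12}$, we automatically have $X^* \in \Omega$, and Theorem \ref{The_Equivalence_local_second_order} again delivers that $X^*$ is an SOSP of \ref{Prob_Pen}.

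There is no real obstacle here; the whole content has been pre-loaded into Proposition \ref{Prop_uniformly_bounded} and Theorem \ref{The_Equivalence_local_second_order}. The only conceptual point worth stating explicitly is that $\ca{S}_{n,p} \subset \Omega$, which is why the reverse direction needs no appeal to Assumption \ref{Assumption_global_f}; the forward direction, by contrast, is precisely where Assumption \ref{Assumption_global_f} earns its keep, since it rules out stray first-order stationary points of \ref{Prob_Pen} drifting far from the manifold.
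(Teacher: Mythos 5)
Your proof is correct and follows exactly the route the paper intends: it declares the corollary to follow from Proposition \ref{Prop_uniformly_bounded} (confining all first-order, hence all second-order, stationary points of \ref{Prob_Pen} to $\Omega$) combined with Theorem \ref{The_Equivalence_local_second_order}, and omits the details as straightforward. Your two bookkeeping observations --- that $\hat{\beta} \geq \bar{\beta}$ and that $\ca{S}_{n,p} \subset \Omega$ --- are precisely the details being omitted, so nothing further is needed.
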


\subsection{Estimating Stationarity}
When we implement an infeasible approach to \ref{Prob_Pen}, the returned solution is usually infeasible since the iterates are not necessarily restricted on $\ca{S}_{n,p}$. 
Sometimes we pursue high accuracy for the feasibility at the same time. To this end, we impose an orthonormalization as a postprocess after obtaining a solution $X$ with mild accuracy by applying an unconstrained optimization approach to solve \ref{Prob_Pen}. Namely, 
\begin{eqnarray}\label{orth}
	X \to \ProjS(X),
\end{eqnarray}
where $\ProjS: \bb{R}^{n\times p} \to \ca{S}_{n,p}$ is the projection on Stiefel manifold defined in Section \ref{Sec:notation}. 

In this subsection, we study the relationship between the stationarity of $\ProjS(X)$ with respect to \ref{Prob_Ori} and the stationarity $X$ with respect to \ref{Prob_Pen}. More specifically, we aim to estimate an upper-bound for $\norm{\grad f(\ProjS(X))}\ff$ from $\norm{\nabla h(X)}\ff$. Therefore, we could explicitly setting the stopping criteria for \ref{Prob_Pen} to achieve a desired accuracy in solving \ref{Prob_Ori}. Moreover, the iteration complexity of various unconstrained optimization approaches directly follows from existing rich results when applied to solve \ref{Prob_Pen}.

The following lemma guarantees that the postprocess \eqref{orth}  can further reduce the function value 
if the current iterate is sufficiently close to the Stiefel manifold.
\begin{prop}
	\label{Le_ortho_fval}
	
	Suppose 
	$X \in\BOMs$, then it holds that
	\begin{equation*}
		h(\ProjS(X)) \leq h(X) - \left( \frac{\beta}{4} -  \frac{M_1}{2} \right)\norm{X\tp X - I_p}\fs. 
	\end{equation*}
\end{prop}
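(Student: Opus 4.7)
The plan is to first reduce the claim to a pointwise estimate on $g$. Since $\ProjS(X) \in \ca{S}_{n,p}$, one has $\Apen{\ProjS(X)} = I_p$ and hence $h(\ProjS(X)) = f(\ProjS(X))$. After cancelling $\tfrac{\beta}{4}\norm{X\tp X - I_p}\fs$ from both sides, the proposition reduces to
\[
f(\ProjS(X)) - f(X\Apen{X}) \le \tfrac{M_1}{2}\norm{X\tp X - I_p}\fs.
\]

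I would then rewrite the left-hand side as $g(\ProjS(X)) - g(X)$ and integrate along the straight-line path $Z_t := (1-t)\ProjS(X) + tX$ for $t \in [0,1]$ in the $X$-space. The spectral-norm ball $\Omega$ is convex, $\norm{\ProjS(X)}_2 = 1$, and $\norm{X}_2 \le 1 + 1/12$ (which follows from $X \in \BOMs$ because $\sqrt{7/6} < 1 + 1/12$), so $Z_t \in \Omega$ throughout. Using the fundamental theorem of calculus, the self-adjointness of the Jacobian $D \mapsto J_{Z_t}(D) := D\Apen{Z_t} - Z_t \Phi(Z_t\tp D)$ shown in the proof of Proposition~\ref{Le_gradient_h}, and the identity $\nabla g(Z_t) = J_{Z_t}(G(Z_t))$,
\[
g(X) - g(\ProjS(X)) = \int_0^1 \inner{G(Z_t), J_{Z_t}(X - \ProjS(X))} dt,
\]
with $\norm{G(Z_t)}\ff \le M_1$ since $Z_t \in \Omega$.

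The main technical step is simplifying $J_{Z_t}(X - \ProjS(X))$ via the SVD. With economic SVD $X = U \Sigma V\tp$, one has $\ProjS(X) = U V\tp$, $Z_t = U \Sigma_t V\tp$ with $\Sigma_t := I_p + t(\Sigma - I_p)$, and $X - \ProjS(X) = U(\Sigma - I_p) V\tp$. Because $Z_t\tp(X - \ProjS(X)) = V\Sigma_t(\Sigma - I_p)V\tp$ is already symmetric, the formula for $J_{Z_t}$ collapses by diagonal algebra to
\[
J_{Z_t}(X - \ProjS(X)) = -\tfrac{3t}{2}\,U(\Sigma - I_p)^2(I_p + \Sigma_t)V\tp.
\]
The essential structural feature is the $(\Sigma - I_p)^2$ factor, which is quadratic in $\sigma_i - 1$; this is precisely what converts a naive first-order bound in $\norm{X\tp X - I_p}\ff$ into a quadratic one.

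The remainder is routine algebra. Since $X \in \BOMs$ forces each $\sigma_i \in [\sqrt{5/6}, \sqrt{7/6}]$, we get $\norm{I_p + \Sigma_t}_2 \le 1 + \sqrt{7/6}$ and $(\sigma_i - 1)^2 \le (\sigma_i^2 - 1)^2 / (1 + \sqrt{5/6})^2$; together with $\sum_i (\sigma_i - 1)^4 \le \bigl(\sum_i (\sigma_i - 1)^2\bigr)^2$, these yield
\[
\norm{J_{Z_t}(X - \ProjS(X))}\ff \le \tfrac{3t}{2}\cdot\tfrac{1 + \sqrt{7/6}}{(1 + \sqrt{5/6})^2}\norm{X\tp X - I_p}\fs.
\]
Integrating in $t$ contributes $\int_0^1 t\, dt = 1/2$ and produces the overall constant $\tfrac{3}{4}\cdot\tfrac{1 + \sqrt{7/6}}{(1 + \sqrt{5/6})^2}$, which is numerically below $1/2$ for the radius $1/6$. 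Multiplying by $M_1$ closes the estimate. The main obstacle will be organizing the SVD algebra so that the $(\Sigma - I_p)^2$ factor drops out cleanly; without that quadratic structure, the bound would only be linear in $\norm{X\tp X - I_p}\ff$ and the target inequality would fail. The specific radius $1/6$ in the definition of $\BOMs$ is what ensures the final constant is under $1/2$.
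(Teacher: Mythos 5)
Your proof is correct, and it reaches the inequality by a genuinely different decomposition than the paper. The paper's proof never touches $g$ or its gradient along a path: it bounds the \emph{distance between the two arguments of $f$} directly, using the scalar factorization $\frac{3}{2}\sigma-\frac{1}{2}\sigma^3-1=-\frac{1}{2}(\sigma+2)(\sigma-1)^2$ in the SVD to get $\norm{X\Apen{X}-\ProjS(X)}\ff\leq\frac{1}{2}\norm{X\tp X-I_p}\fs$, and then applies a single Lipschitz (mean-value) estimate $f(X\Apen{X})-f(\ProjS(X))\geq-M_1\norm{X\Apen{X}-\ProjS(X)}\ff$. You instead write the difference as $g(X)-g(\ProjS(X))$, integrate $\nabla g$ along the chord $Z_t$ in the $X$-domain, and extract the quadratic factor from $\PT$ evaluated at $Z_t$ in the direction $X-\ProjS(X)$; the same algebraic fact (the map $\sigma\mapsto\frac{3}{2}\sigma-\frac{1}{2}\sigma^3$ matching $1$ to second order at $\sigma=1$) appears, but as the identity $J_{Z_t}(X-\ProjS(X))=-\frac{3t}{2}U(\Sigma-I_p)^2(I_p+\Sigma_t)V\tp$ rather than as a distance bound. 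The paper's route is shorter. Yours is longer but has one technical advantage worth noting: you only ever evaluate $\nabla f$ at points of the form $Z\Apen{Z}$ with $Z\in\Omega$, which is precisely the set over which $M_1$ is defined, whereas the paper's mean-value step implicitly requires $M_1$ to bound $\norm{\nabla f}\ff$ on the whole segment joining $X\Apen{X}$ and $\ProjS(X)$, a set not obviously covered by the definition of $M_1$. Your final constant $\frac{3}{4}\cdot\frac{1+\sqrt{7/6}}{(1+\sqrt{5/6})^2}\approx 0.43$ is indeed below $\frac{1}{2}$, so the claimed bound follows with a little room to spare.
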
	
\begin{proof}
	By the SVD of $X$, we first conclude that 
	\begin{equation}
		\label{Eq_Le_ortho_fval_0}
		\begin{aligned}
			&\norm{X \left( \frac{3}{2}I_p - \frac{1}{2} X\tp X \right) - \ProjS(X)}\ff = \norm{U \Sigma\left(\frac{3}{2}I_p - \frac{1}{2}\Sigma^2 \right) V\tp - UV\tp }\ff \\
			={}& \norm{\Sigma\left(\frac{3}{2}I_p - \frac{1}{2}\Sigma^2 \right) - I_p}\ff =\norm{\left(\frac{1}{2} \Sigma + I_p\right)\left( \Sigma -  I_p\right)^2}\ff \\
			\overset{(i)}{\leq}{}& \frac{1}{2} \norm{\Sigma^2 - I_p}\fs = \frac{1}{2}\norm{X\tp X - I_p}\fs.
		\end{aligned}
	\end{equation}
	Here $(i)$ directly follows from $\left(\Sigma + 2I_p\right)\left( \Sigma -  I_p\right)^2 \preceq \left( \Sigma +  I_p\right)^2\left( \Sigma -  I_p\right)^2 $ when $\Sigma \succeq \frac{5}{6} I_p$,  and the fact that $\norm{A^2}\ff \leq \norm{A}\fs$ holds for any symmetric matrix $A$.

	Then we can conclude that
	\begin{equation*}
		\begin{aligned}
			&h(X) - h(\ProjS(X)) = f\left(X\Apen{X}\right) - f(\ProjS(X)) +  \frac{\beta}{4}\norm{X\tp X - I_p}\fs \\
			\overset{(ii)}{\geq}{}& -M_1 \norm{X\Apen{X} - \ProjS(X)}\ff + \frac{\beta}{4}\norm{X\tp X - I_p}\fs \\
			\overset{(iii)}{\geq}{}& \left( \frac{\beta}{4} -  \frac{M_1}{2} \right) \norm{X\tp X - I_p}\fs.
		\end{aligned}
	\end{equation*}
	Here $(ii)$ follows the Lipschitz continuity of $f$, and $(iii)$ is directly from \eqref{Eq_Le_ortho_fval_0}.
\end{proof}

\begin{lem}
	\label{Le_esti_h_g}
	For any $X\in\BOMs$, we have
	\begin{equation*}
		\norm{\nabla h(X)}\fs \geq \norm{\nabla g(X)}\fs + \left(\frac{2}{3}\beta^2- 4\beta M_1 \right)\norm{X\tp X - I_p}\fs.
	\end{equation*}
\end{lem}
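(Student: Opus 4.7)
The plan is to square out $\norm{\nabla h(X)}\fs$ using the identity $\nabla h(X) = \nabla g(X) + \beta X(X\tp X - I_p)$ from Proposition \ref{Le_gradient_h}, which gives
\begin{equation*}
\norm{\nabla h(X)}\fs = \norm{\nabla g(X)}\fs + 2\beta \inner{\nabla g(X),\, X(X\tp X - I_p)} + \beta^2 \norm{X(X\tp X - I_p)}\fs.
\end{equation*}
The problem then reduces to an absolute lower bound on the cross term and a lower bound on the pure penalty term, both in units of $\norm{X\tp X - I_p}\fs$.

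For the cross term, I would invoke Lemma \ref{Le_inner_g_X} to rewrite $\inner{X(X\tp X - I_p), \nabla g(X)} = -\frac{3}{2}\inner{(X\tp X - I_p)^2, \Phi(X\tp G(X))}$. Since $(X\tp X - I_p)^2 \succeq 0$ and $\Phi(X\tp G(X))$ is symmetric, the trace inequality $|\tr(AB)| \leq \norm{A}_2 \tr(B)$ for $B \succeq 0$, together with $\norm{\Phi(X\tp G(X))}_2 \leq \norm{X}_2 \norm{G(X)}\ff$, yields a clean estimate. On $\BOMs$ we have $\norm{X}_2^2 \leq 1 + 1/6 = 7/6$, and because $\BOMs \subset \Omega$ we may use $\norm{G(X)}\ff \leq M_1$. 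Putting these together and absorbing $3\sqrt{7/6}$ into $4$ (since $7/6 \leq 16/9$) gives $|2\beta \inner{\nabla g(X), X(X\tp X - I_p)}| \leq 4\beta M_1 \norm{X\tp X - I_p}\fs$.

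For the pure penalty term, I rewrite $\norm{X(X\tp X - I_p)}\fs = \tr\bigl((X\tp X - I_p)^2 X\tp X\bigr)$. On $\BOMs$ we have $\norm{X\tp X - I_p}_2 \leq \norm{X\tp X - I_p}\ff \leq 1/6$, so $X\tp X \succeq (5/6) I_p$, giving $\norm{X(X\tp X - I_p)}\fs \geq \frac{5}{6}\norm{X\tp X - I_p}\fs \geq \frac{2}{3}\norm{X\tp X - I_p}\fs$. Substituting the two bounds into the expansion and collecting coefficients produces exactly the claimed inequality.

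The only real obstacle is keeping the numerical constants sharp enough to match the stated coefficients: one must verify $3\sqrt{7/6} \leq 4$ so that the cross term fits within the $-4\beta M_1$ budget, and $5/6 \geq 2/3$ so the $\beta^2$ contribution survives the cross-term loss. Both checks are elementary, so beyond executing the expansion and invoking Lemma \ref{Le_inner_g_X} there is no serious analytical difficulty.
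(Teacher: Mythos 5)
Your proposal is correct and follows essentially the same route as the paper: expand $\norm{\nabla h(X)}\fs$ via $\nabla h = \nabla g + \beta X(X\tp X - I_p)$, rewrite the cross term with Lemma \ref{Le_inner_g_X}, and bound the cross term by $4\beta M_1\norm{X\tp X - I_p}\fs$ and the penalty term below by $\frac{2}{3}\beta^2\norm{X\tp X - I_p}\fs$ using $X\tp X \succeq \frac{5}{6}I_p$ on $\BOMs$. Your version actually spells out the constant-checking ($3\sqrt{7/6}\leq 4$, $5/6\geq 2/3$) that the paper leaves implicit.
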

\begin{proof}
	Since $h(X) = g(X) + \frac{\beta}{4}\norm{X\tp X - I_p}\fs$, we have
	\begin{equation*}
		\begin{aligned}
			&\inner{\nabla h(X), \nabla h(X)} = \inner{\nabla g(X), \nabla g(X)} + 2\beta \inner{\nabla g(X), X(X\tp X - I_p) } + \beta^2 \norm{X(X\tp X - I_p)}\fs \\
			=& \norm{\nabla g(X)}\fs - 3\beta \inner{\Phi(X\tp G(X)), (X\tp X - I_p)^2} + \beta^2 \norm{X(X\tp X - I_p)}\fs \\
			\geq{}&  \norm{\nabla g(X)}\fs + \left(\frac{2}{3}\beta^2- 4\beta M_1 \right)\norm{X\tp X - I_p}\fs. 
		\end{aligned}
	\end{equation*}
	Here the second equality directly follows Lemma \ref{Le_inner_g_X}. 
\end{proof}

The following lemma illustrates the relationship between $\norm{\nabla h(X)}\ff$ and $\norm{\grad f(\ProjS(X))}\ff$. 
\begin{lem}
	\label{Le_grad_f_upper_bound}
	Suppose $\beta \geq \bar{\beta}$, 
	$X \in \BOMs$, then it holds that
	\begin{equation*}
		\norm{\nabla h(X)}\ff \geq \frac{1}{2}\norm{\grad f(\ProjS(X))}\ff + \frac{\beta}{4} \norm{X\tp X - I_p}\ff. 
	\end{equation*}
\end{lem}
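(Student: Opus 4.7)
The plan is to combine three ingredients already available in the paper: the squared lower bound from Lemma~\ref{Le_esti_h_g}, an SVD-based comparison between $X$ and $Y := \ProjS(X)$, and a passage from squared to non-squared norm via the elementary inequality $\sqrt{a+b}\geq (\sqrt a+\sqrt b)/\sqrt 2$ (valid for $a,b\geq 0$ by AM-GM).

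First I would set up the comparison between $\nabla g(X)$ and $\grad f(Y)$. Using the SVD $X = U\Sigma V\tp$ with $Y=UV\tp$, the identity $\sigma_i^2-1=(\sigma_i-1)(\sigma_i+1)$ and $\sigma_i\geq 0$ immediately give $\norm{X-Y}\ff\leq \norm{X\tp X-I_p}\ff$. Since $X\in \BOMs\subset\Omega$ and $Y\in\ca{S}_{n,p}\subset\Omega$, with the segment $[X,Y]$ lying in $\Omega$, the local Lipschitz bound on $\nabla g$ yields $\norm{\nabla g(X)-\nabla g(Y)}\ff\leq M_2\norm{X-Y}\ff$. Observing from Proposition~\ref{Le_gradient_h} that $Y\Apen{Y}=Y$ so $\nabla g(Y)=\grad f(Y)$, I obtain
\begin{equation*}
\norm{\nabla g(X)}\ff \;\geq\; \norm{\grad f(Y)}\ff \;-\; M_2\norm{X\tp X-I_p}\ff.
\end{equation*}

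Next I would invoke Lemma~\ref{Le_esti_h_g}. Since $\beta\geq\bar\beta\geq 12M_1$, the coefficient $\tfrac{2}{3}\beta^2 - 4\beta M_1 \geq \tfrac{1}{3}\beta^2$, so
\begin{equation*}
\norm{\nabla h(X)}\fs \;\geq\; \norm{\nabla g(X)}\fs \;+\; \tfrac{\beta^2}{3}\,\norm{X\tp X-I_p}\fs.
\end{equation*}
Taking square roots via the AM-GM inequality $\sqrt{a+b}\geq(\sqrt a+\sqrt b)/\sqrt 2$ and substituting the previous Lipschitz bound (monotonicity of scaling by $1/\sqrt 2$ makes the substitution valid even if $\norm{\grad f(Y)}\ff - M_2\norm{X\tp X-I_p}\ff$ happens to be negative),
\begin{equation*}
\norm{\nabla h(X)}\ff \;\geq\; \frac{1}{\sqrt 2}\,\norm{\grad f(Y)}\ff \;+\; \left(\frac{\beta}{\sqrt 6}-\frac{M_2}{\sqrt 2}\right)\norm{X\tp X-I_p}\ff.
\end{equation*}

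Finally I would close the constants using $\beta\geq\bar\beta\geq 6M_2$. This makes $M_2/\sqrt 2\leq \beta/(6\sqrt 2)$, and a direct numerical check gives
\begin{equation*}
\frac{1}{\sqrt 2} \;>\; \frac{1}{2}, \qquad \frac{1}{\sqrt 6}-\frac{1}{6\sqrt 2} \;>\; \frac{1}{4},
\end{equation*}
yielding the claimed inequality. I do not foresee a real obstacle: the main subtlety is simply that within $\BOMs$ the singular values of $X$ stay in $[\sqrt{5/6},\sqrt{7/6}]$, which guarantees $\ProjS(X)$ is well defined and both $X$ and $Y$ lie inside $\Omega$, so the Lipschitz constant $M_2$ is usable. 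Everything else is elementary algebra with the explicit constants from $\bar\beta=\max\{12M_1,6M_2\}$.
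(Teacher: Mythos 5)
Your proof is correct and follows essentially the same route as the paper's: Lemma~\ref{Le_esti_h_g}, the SVD-based bound $\norm{X-\ProjS(X)}\ff \leq C\,\norm{X\tp X - I_p}\ff$, the Lipschitz constant $M_2$ applied between $X$ and $\ProjS(X)$ inside $\Omega$, and the identity $\nabla g = \grad f$ on $\ca{S}_{n,p}$. If anything, your handling of the square-root step via $\sqrt{a+b}\geq(\sqrt a+\sqrt b)/\sqrt 2$ (yielding the coefficient $1/\sqrt 2>1/2$) is more careful than the paper's corresponding line, which asserts $\sqrt{a^2+d^2}\geq \tfrac12 a + d$ --- an inequality that fails in general when $d>\tfrac34 a$ --- even though the paper's final constants still close.
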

\begin{proof}
	Suppose $X$ has singular value decomposition as $X = U\Sigma V\tp$, we can conclude that 
	\begin{equation}
		\label{Eq_Le_grad_f_upper_bound}
		\norm{X  - \ProjS(X)}\ff = \norm{\Sigma - I_p}\ff \leq \frac{6}{11} \norm{\Sigma^2 - I_p}\ff =   \frac{6}{11}\norm{X\tp X - I_p}\ff.
	\end{equation}

	Therefore, the results in Lemma \ref{Le_esti_h_g} illustrates that
	\begin{equation*}
		\begin{aligned}
			&\norm{\nabla h(X)}\ff \geq \frac{1}{2}\norm{\nabla g(X)}\ff + \frac{\sqrt{6\beta^2 - 36\beta M_1}}{3}\norm{X\tp X - I_p}\ff \\
			\geq{}& \frac{1}{2}\norm{\nabla g(\ProjS(X))}\ff - \frac{3M_2}{11}\norm{X\tp X - I_p}\ff +  \frac{\sqrt{6\beta^2 - 36\beta M_1}}{3}\norm{X\tp X - I_p}\ff \\
			\geq{}& \frac{1}{2}\norm{\grad f(\ProjS(X))}\ff + \frac{\beta}{4} \norm{X\tp X - I_p}\ff. 
		\end{aligned}
	\end{equation*}
\end{proof}

\begin{prop}
	\label{Prop_hessian_esti}
	Suppose Assumption \ref{Assumption_second_order} holds, given any $X \in \BOMs$, then it holds that
	\begin{equation}
		\sigma_{\min}(\hess f(\ProjS(X)))  \geq \sigma_{\min}(\nabla^2 h(X)) -  \norm{\nabla^2 g(X) - \nabla^2 g(\ProjS(X))}\ff -\frac{9}{2}\norm{\nabla h(X)}\ff
	\end{equation}
\end{prop}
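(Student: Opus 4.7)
Set $Y := \ProjS(X) \in \ca{S}_{n,p}$. The starting point is the variational characterization
\[
\sigma_{\min}(\hess f(Y)) = \min_{D \in \ca{T}_Y,\, \norm{D}\ff = 1} \inner{D, \hess f(Y)[D]}.
\]
Since $Y \in \ca{S}_{n,p}$, Lemma \ref{Lem_Hessian_tangent} gives $\inner{D, \hess f(Y)[D]} = \inner{D, \nabla^2 h(Y)[D]}$; furthermore, the extra $\beta$--terms in Proposition \ref{Prop_Hessian} evaluated at $Y$ vanish because $\Phi(Y\tp D) = 0$ and $Y\tp Y = I_p$, so this quadratic form collapses to $\inner{D, \nabla^2 g(Y)[D]}$. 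Fix an arbitrary unit $D \in \ca{T}_Y$ for the rest of the proof.

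The next step is to transport the base point from $Y$ to $X$. Writing $\nabla^2 g(Y) = \nabla^2 g(X) + (\nabla^2 g(Y) - \nabla^2 g(X))$ and then invoking Proposition \ref{Prop_Hessian} to express $\nabla^2 g(X) = \nabla^2 h(X) - \beta(2 X\Phi(X\tp \cdot) + (\,\cdot\,)(X\tp X - I_p))$, I obtain
\[
\inner{D, \nabla^2 g(Y)[D]} \geq \sigma_{\min}(\nabla^2 h(X)) - \norm{\nabla^2 g(Y) - \nabla^2 g(X)}\ff - 2\beta \norm{\Phi(X\tp D)}\fs - \beta \inner{D\tp D, X\tp X - I_p}.
\]
It remains to absorb the last two summands into $\tfrac{9}{2}\norm{\nabla h(X)}\ff$. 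For the $\beta\inner{D\tp D, X\tp X - I_p}$ term, Cauchy--Schwarz plus $\norm{D\tp D}\ff \leq 1$ and Lemma \ref{Le_grad_f_upper_bound} (which yields $\norm{X\tp X - I_p}\ff \leq \tfrac{4}{\beta}\norm{\nabla h(X)}\ff$) give a net bound of $4\norm{\nabla h(X)}\ff$.

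The term $2\beta \norm{\Phi(X\tp D)}\fs$ is where the tangent condition is used decisively: since $\Phi(Y\tp D) = 0$, I can rewrite $\Phi(X\tp D) = \Phi((X-Y)\tp D)$, so that $\norm{\Phi(X\tp D)}\ff \leq \norm{X - Y}\ff$. Combining $\norm{X - Y}\ff \leq \tfrac{6}{11}\norm{X\tp X - I_p}\ff$ from \eqref{Eq_Le_grad_f_upper_bound}, the radius condition $\norm{X\tp X - I_p}\ff \leq \tfrac{1}{6}$ from $X \in \BOMs$, and Lemma \ref{Le_grad_f_upper_bound} once more, yields $2\beta \norm{\Phi(X\tp D)}\fs \leq \tfrac{1}{2}\norm{\nabla h(X)}\ff$. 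Adding the two pieces gives the coefficient $4 + \tfrac{1}{2} = \tfrac{9}{2}$ as claimed, and minimizing over $D$ completes the proof.

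The main obstacle is squeezing the $\beta$ out of the $2\beta\norm{\Phi(X\tp D)}\fs$ summand. Naively this term has $\beta$ as a prefactor and threatens to blow up. The key observation is that, thanks to $D \in \ca{T}_Y$, the quantity $\Phi(X\tp D)$ is \emph{quadratically} small in $\norm{X\tp X - I_p}$: one factor of $\norm{X\tp X - I_p}\ff$ cancels $\beta$ via Lemma \ref{Le_grad_f_upper_bound}, while the other factor is harmlessly absorbed using the radius bound $\norm{X\tp X - I_p}\ff \leq \tfrac{1}{6}$ from $\BOMs$.
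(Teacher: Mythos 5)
Your proposal is correct and follows essentially the same route as the paper's proof: both reduce $\sigma_{\min}(\hess f(\ProjS(X)))$ to the quadratic form of $\nabla^2 g$ on unit tangent vectors at $Y=\ProjS(X)$, shift the base point from $Y$ to $X$ via $\norm{\nabla^2 g(X)-\nabla^2 g(Y)}\ff$, peel off the $\beta$-terms of $\nabla^2 h(X)$ using $\Phi(X\tp D)=\Phi((X-Y)\tp D)$ together with $\norm{X-Y}\ff\leq\frac{6}{11}\norm{X\tp X-I_p}\ff$ and the radius bound from $\BOMs$, and convert $\beta\norm{X\tp X-I_p}\ff$ into $\norm{\nabla h(X)}\ff$ via Lemma \ref{Le_grad_f_upper_bound}. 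The only differences are cosmetic: you run the inequality chain in the opposite direction (lower-bounding the tangent quadratic form for arbitrary unit $D$ and then minimizing, rather than testing $\nabla^2 h(X)$ against the minimizer $\tilde D$) and split the constant as $4+\tfrac{1}{2}$ instead of $\tfrac{11}{10}\cdot 4\leq\tfrac{9}{2}$.
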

\begin{proof}
	Let $Y := \ProjS(X)$, i.e. let $X = U \Sigma V\tp$ be the singular value decomposition of $X$, then $Y = UV\tp$. Then from Lemma \ref{Lem_Hessian_tangent}, for any $D_1 \in \ca{T}_Y$, it holds that
	\begin{equation*}
		\begin{aligned}
			\inner{D_1, \nabla^2 h(Y)[D_1]} = \inner{D_1, \nabla^2 f(X)[D_1] - D_1 \Phi(X\tp \nabla f(X))}.
		\end{aligned}
	\end{equation*} 
	Therefore, the $\sigma_{\min}(\hess f(Y))$ can be expressed by
	\begin{equation*}
		\sigma_{\min}(\hess f(Y)) = \min_{D_1 \in \ca{T}_Y, \norm{D_1}\ff = 1} \inner{D_1, \nabla g(Y)[D_1]}. 
	\end{equation*}
	Let $\tilde{D}:= \arg\min_{D_1 \in \ca{T}_Y, \norm{D_1}\ff = 1} \inner{D_1, \nabla g(Y)[D_1]}$, then it holds that
	\begin{equation*}
		\left| \inner{\tilde{D},  \nabla^2 g(X)[\tilde{D}]} - \inner{\tilde{D},  \nabla^2 g(X)[\tilde{D}]} \right| \leq \norm{\nabla^2 g(X) - \nabla^2 g(Y)}\ff.
	\end{equation*}
	Besides, from the expression of the hessian of $\norm{X\tp X - I_p}\fs$, we achieve
	\begin{equation*}
		\begin{aligned}
			&\inner{\tilde{D}, \tilde{D}(X\tp X - I_p) + 2 X \Phi(\tilde{D}\tp X)} \leq \inner{\tilde{D}, \tilde{D}(X\tp X - I_p)} + 2 \inner{\tilde{D}, X \Phi(\tilde{D}\tp X)}\\
			\leq{}& \norm{D}\fs \norm{X\tp X - I_p}\ff + 2 \norm{\Phi( X\tp \tilde{D})}\fs = \norm{D}\fs \norm{X\tp X - I_p}\ff + 2 \norm{\Phi( (X - Y)\tp \tilde{D})}\fs \\
			\leq{}& \norm{X\tp X - I_p}\ff  + 2\norm{X-Y}\fs \norm{D}\fs \leq \norm{X\tp X - I_p}\ff  + \frac{72}{121} \norm{X\tp X - I_p}\fs\\
			\leq{}& \frac{11}{10} \norm{X\tp X - I_p}\ff. 
		\end{aligned}
	\end{equation*}	
	
	Therefore, we conclude that
	\begin{equation*}
		\begin{aligned}
			&\sigma_{\min}(\nabla h(X))\\
			={}& \min_{D \in \bb{R}^n, \norm{D}\ff = 1} \inner{D, \nabla^2 h(X)[D]}\leq \min_{D \in \ca{T}_Y, \norm{D}\ff = 1} \inner{D, \nabla^2 h(X)[D]}\\
			\leq{}& \inner{\tilde{D}, \nabla^2 h(X)\tilde{D}} = \inner{\tilde{D}, \nabla^2 g(X)\tilde{D}} + \beta \inner{\tilde{D}, \tilde{D}(X\tp X - I_p) + 2 X \Phi(\tilde{D}\tp X)}\\
			\leq{}& \sigma_{\min}(\hess f(Y)) + \norm{\nabla^2 g(X) - \nabla^2 g(Y)}\ff + \frac{11\beta}{10} \norm{X\tp X - I_p}\ff\\
			\leq{}& \sigma_{\min}(\hess f(Y)) + \norm{\nabla^2 g(X) - \nabla^2 g(Y)}\ff + \frac{9}{2}\norm{\nabla h(X)}\ff,
		\end{aligned}
	\end{equation*}
	and complete the proof.
\end{proof}

\subsection{{\L}ojasiewicz gradient inequality}
In this section, we study the relationship between the Riemannian {\L}ojasiewicz gradient inequality for $f(X)$ and the Euclidean {\L}ojasiewicz gradient inequality for $h(X)$.

\begin{prop}
	Suppose $f(X)$ satisfies the Riemannian {\L}ojasiewicz gradient inequality at $X\in \ca{S}_{n,p}$ with {\L}ojasiewicz exponent $\theta \in \left(0, \frac{1}{2}\right]$, i.e there exists a neighborhood $U \subset \ca{S}_{n,p}$ and a constant $C > 0$ such that 
	\begin{equation*}
		\norm{\grad  f(Y)}\ff  \geq C |f(Y) - f(X)|^{1-\theta},
	\end{equation*} 
	holds for any $Y \in U$ and the penalty parameter of \ref{Prob_Pen} satisfies $\beta > \max\{8CM_1, 1, \bar{\beta} \}$. Then $h(X)$ satisfies the {\L}ojasiewicz gradient inequality at  $X \in \ca{S}_{n,p}$ with {\L}ojasiewicz exponent $\theta \in (0, \frac{1}{2}]$.
\end{prop}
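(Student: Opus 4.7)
The plan is to transfer the Riemannian {\L}ojasiewicz inequality for $f$ at $X$ into a Euclidean one for $h$ at $X$ by comparing each nearby Euclidean iterate $Z$ with its orthogonal projection $\ProjS(Z) \in \ca{S}_{n,p}$, and then exploiting the quantitative estimates already proved in this section, namely Proposition \ref{Le_ortho_fval} and Lemma \ref{Le_grad_f_upper_bound}.

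First I would shrink the neighborhood. Pick an open Euclidean ball $\tilde U$ around $X$ small enough that $\tilde U \subset \BOMs$, that $\ProjS(Z) \in U$ for every $Z \in \tilde U$ (which follows from continuity of $\ProjS$ on a neighborhood of $\ca{S}_{n,p}$), and that $\norm{Z\tp Z - I_p}\ff \leq 1$ on $\tilde U$. For such $Z$, the Riemannian hypothesis applied at the feasible point $\ProjS(Z) \in U$ yields $\norm{\grad f(\ProjS(Z))}\ff \geq C|f(\ProjS(Z)) - f(X)|^{1-\theta}$, and combining this with Lemma \ref{Le_grad_f_upper_bound} produces
\begin{equation*}
\norm{\nabla h(Z)}\ff \;\geq\; \frac{C}{2}\,|f(\ProjS(Z)) - f(X)|^{1-\theta} + \frac{\beta}{4}\norm{Z\tp Z - I_p}\ff.
\end{equation*}

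Next I would estimate $|h(Z) - h(X)|^{1-\theta}$ from above. Since $\ProjS(Z) \in \ca{S}_{n,p}$ makes the penalty term vanish, $h(\ProjS(Z)) = f(\ProjS(Z))$, and splitting
\begin{equation*}
h(Z) - h(X) = [h(Z) - h(\ProjS(Z))] + [f(\ProjS(Z)) - f(X)]
\end{equation*}
reduces the first bracket to exactly the quantity handled in Proposition \ref{Le_ortho_fval}: the same Lipschitz computation bounds $|h(Z) - h(\ProjS(Z))|$ by a constant $C_1$ times $\norm{Z\tp Z - I_p}\fs$. Subadditivity of $t \mapsto t^{1-\theta}$, valid because $1-\theta \in [0,1]$, then yields
\begin{equation*}
|h(Z) - h(X)|^{1-\theta} \;\leq\; C_1^{1-\theta}\norm{Z\tp Z - I_p}\ff^{2(1-\theta)} + |f(\ProjS(Z)) - f(X)|^{1-\theta}.
\end{equation*}

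The decisive step uses $\theta \leq 1/2$: then $2(1-\theta) \geq 1$, so on $\tilde U$ the first term above is bounded by $C_1^{1-\theta}\norm{Z\tp Z - I_p}\ff$, a linear function of the infeasibility. Both $|f(\ProjS(Z)) - f(X)|^{1-\theta}$ and $\norm{Z\tp Z - I_p}\ff$ are already dominated by multiples of $\norm{\nabla h(Z)}\ff$ via the first displayed inequality, so a single merging produces the desired {\L}ojasiewicz inequality with constant on the order of $(4 C_1^{1-\theta}/\beta + 2/C)^{-1}$. The main obstacle is precisely this coupling between the exponent and the quadratic decay of the penalty correction: if $\theta$ were larger than $1/2$, then $\norm{Z\tp Z - I_p}\ff^{2(1-\theta)}$ would be strictly sub-linear near $\ca{S}_{n,p}$ and could no longer be absorbed into the $\frac{\beta}{4}\norm{Z\tp Z - I_p}\ff$ term of Lemma \ref{Le_grad_f_upper_bound}; the restriction $\theta \leq 1/2$ in the hypothesis is exactly what closes the estimate, while the conditions $\beta > \max\{8CM_1, 1, \bar\beta\}$ ensure that all the intermediate constants $(C_1, \beta/4, 2/C, \ldots)$ combine into a strictly positive Łojasiewicz constant.
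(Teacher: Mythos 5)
Your proposal is correct and follows essentially the same route as the paper's proof: project the nearby Euclidean point onto $\ca{S}_{n,p}$, invoke the Riemannian {\L}ojasiewicz inequality there, transfer it back via Lemma \ref{Le_grad_f_upper_bound}, bound the function-value discrepancy between the point and its projection quadratically in $\norm{Z\tp Z - I_p}\ff$, and use subadditivity of $t\mapsto t^{1-\theta}$ together with $2(1-\theta)\geq 1$ to absorb that quadratic error into the $\frac{\beta}{4}\norm{Z\tp Z - I_p}\ff$ term. The only cosmetic difference is that you bound $|h(Z)-h(\ProjS(Z))|$ as a single block via the computation of Proposition \ref{Le_ortho_fval}, whereas the paper splits off $|g(Y)-g(Z)|$ and the penalty term separately; both reduce to the same estimate.
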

\begin{proof}
	For any $Y \in \Omega$, we denote $Z := Y(Y\tp Y)^{-\frac{1}{2}}$. It is clear that $Z \in \ca{S}_{n,p}$. By Lemma \ref{Le_gradient_h} and the Riemannian {\L}ojasiewicz gradient inequality of $f$, we have
	\begin{equation*}
		\norm{	\nabla g(Z)}\ff = \norm{\grad f(Z)}\ff \geq C|f(Z) - f(X)|^{1-\theta} =  C|g(Z) - g(X)|^{1-\theta}. 
	\end{equation*}
	Besides, since
	\begin{equation*}
		\begin{aligned}
			{}&\norm{\Apen{X}^2X\tp X  - I_p}\fs = \norm{X\tp X - I_p + (I_p - X\tp X) X\tp X + \frac{1}{4}X\tp X(X\tp X - I_p)^2 }\ff\\
			={}& \norm{ \left(\frac{1}{4}X\tp X - I_p \right)(X\tp X - I_p)^2 }\ff,
		\end{aligned}
	\end{equation*}	
	we obtain
	\begin{equation}
		\label{Eq_xnc1}
		| g(Y) -  g(Z)| \leq M_1\norm{Y\tp Y - I_p}\fs.
	\end{equation}
	Together with Lemma \ref{Le_grad_f_upper_bound}, we can conclude that
	\begin{equation*}
		\norm{\nabla h(Y)}\ff \geq \frac{1}{2}\norm{\nabla g(Z)}\ff + \frac{\beta}{4} \norm{Y\tp Y - I_p}\ff. 
	\end{equation*}
	
	In addition, since $\theta \in (0, \frac{1}{2}]$, and $Y \in \Omega$, we have
	\begin{equation*}
		\begin{aligned}
			& |h(Y) - h(X)|^{1-\theta} =  \left|g(Y) - g(X) + \frac{\beta}{4}\norm{Y\tp Y - I_p}\fs \right|^{1-\theta}\\
			\leq{}& \left|g(Y) - g(X)  \right|^{1-\theta} + \left(\frac{\beta}{4}\norm{Y\tp Y - I_p}\fs\right)^{1-\theta}
			\leq{} \left|g(Y) - g(X)  \right|^{1-\theta} + \frac{\beta}{4}\norm{Y\tp Y - I_p}\ff. 
		\end{aligned}
	\end{equation*}
	
	Therefore, we have
	\begin{equation*}
		\begin{aligned}
			&\norm{	\nabla h(Y)}\ff
			\geq {} \norm{\nabla g(Z)}\ff  + \frac{\beta}{4}  \norm{Y\tp Y - I_p}\ff 
			\geq {} C| g(Z) -  g(X)|^{1-\theta} + \frac{\beta}{4} \norm{Y\tp Y - I_p}\ff \\
			\overset{(i)}{\geq} {}& C| g(Y) -  g(X)|^{1-\theta} - C|g(Z) - g(Y)|^{1-\theta} +\frac{\beta}{4} \norm{Y\tp Y - I_p}\ff\\
			\overset{(ii)}{\geq} {}& C| g(Y) -  g(X)|^{1-\theta} - C M_1^{1-\theta}\norm{Y\tp Y - I_p}\ff^{2-2\theta}   + \frac{\beta}{4} \norm{Y\tp Y - I_p}\ff \\
			\geq {}& C| g(Y) -  g(X)|^{1-\theta} + \frac{\beta}{8}\norm{Y\tp Y - I_p}\ff
			\geq{}  \min\left\{C, \frac{1}{2}\right\} |h(Y) - h(X)|^{1-\theta}.
		\end{aligned}
	\end{equation*}
	Here inequality $(i)$ uses the fact that $  |a|^{1-\theta} + |b|^{1-\theta} \geq (|a| + |b|)^{1-\theta} \geq |a+ b|^{1-\theta}$ for any $a, b \in \bb{R}$, $\theta \in (0, \frac{1}{2}]$. 
	Besides, inequality $(ii)$ directly follows from \eqref{Eq_xnc1}.  
	As a result, we obtain
	\begin{equation*}
		\norm{	\nabla h(Y)}\ff \geq \min\left\{C, \frac{1}{2}\right\}  |h(Y) - h(X)|^{1-\theta},
	\end{equation*}
	which concludes the proof. 
\end{proof}

Besides, we can even show that \ref{Prob_Ori} and \ref{Prob_Pen} 
share the same local minimizers.

\begin{theo}
	\label{The_local_minimizer}
	Suppose Assumptions \ref{Assumption_global_f} and \ref{Assumption_second_order} hold, and $\beta \geq \hat{\beta}$, then \ref{Prob_Pen} and \ref{Prob_Ori} share the same local minimizers. 
\end{theo}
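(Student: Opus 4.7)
The plan is to prove the two inclusions separately, in both cases using the established stationarity correspondences together with the ``free feasibility gain'' given by Proposition \ref{Le_ortho_fval}.

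For the direction ``local minimizer of \ref{Prob_Pen} $\Rightarrow$ local minimizer of \ref{Prob_Ori}'', let $X^{*}$ be a local minimizer of $h$. Since $h$ is $C^{2}$ under Assumption \ref{Assumption_second_order} (by Proposition \ref{Prop_Hessian}), $X^{*}$ is automatically a second-order stationary point of \ref{Prob_Pen}. Because $\beta \geq \hat{\beta} \geq \bar{\beta}$, Proposition \ref{Prop_uniformly_bounded} places $X^{*}\in\Omega$, and then Theorems \ref{The_Equivalence_local} and \ref{The_strict_saddle} (equivalently, Theorem \ref{The_Equivalence_local_second_order}) force $X^{*}\in\ca{S}_{n,p}$. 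To upgrade from stationarity to local minimality on the manifold, I simply restrict the local-min inequality for $h$ to feasible points: for every $Y \in \ca{S}_{n,p}$ one has $Y\tp Y - I_{p} = 0$ and $Y\Apen{Y} = Y$, so $h(Y) = f(Y)$; the same holds at $X^{*}$. Thus $f(Y) \geq f(X^{*})$ for all $Y\in\ca{S}_{n,p}$ in a Euclidean neighborhood of $X^{*}$, which is precisely local minimality for \ref{Prob_Ori}.

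For the converse direction, let $X^{*}\in\ca{S}_{n,p}$ be a local minimizer of \ref{Prob_Ori}, and pick $\delta>0$ such that $f(Y)\geq f(X^{*})$ for every $Y\in\ca{S}_{n,p}$ with $\norm{Y-X^{*}}\ff \leq \delta$. Because $X^{*}$ has full column rank, the orthogonal projector $\ProjS$ is continuous on a Euclidean neighborhood of $X^{*}$, so I can choose $\eta>0$ such that every $X$ with $\norm{X-X^{*}}\ff\leq\eta$ lies in $\BOMs$ and satisfies $\norm{\ProjS(X)-X^{*}}\ff \leq \delta$. Since $\beta \geq \hat{\beta} \geq 12\hat{M}_{1} \geq 12M_{1} > 2M_{1}$, Proposition \ref{Le_ortho_fval} yields
\begin{equation*}
h(\ProjS(X)) \leq h(X) - \left(\frac{\beta}{4}-\frac{M_{1}}{2}\right)\norm{X\tp X - I_{p}}\fs \leq h(X).
\end{equation*}
Using $\ProjS(X)\in\ca{S}_{n,p}$ and the identity $h = f$ on $\ca{S}_{n,p}$ observed above, together with the local-min hypothesis for \ref{Prob_Ori}, I then obtain $h(\ProjS(X)) = f(\ProjS(X)) \geq f(X^{*}) = h(X^{*})$, hence $h(X)\geq h(X^{*})$ for every $X$ in the $\eta$-ball around $X^{*}$. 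This is local minimality of $X^{*}$ for \ref{Prob_Pen}.

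The only non-mechanical step is the continuity of $\ProjS$ near $X^{*}$, which is standard because the polar factor depends smoothly on the matrix at any point of full column rank, and every point of $\ca{S}_{n,p}$ is such a point. Everything else is a direct assembly of Proposition \ref{Prop_uniformly_bounded}, Theorem \ref{The_Equivalence_local_second_order}, and Proposition \ref{Le_ortho_fval}.
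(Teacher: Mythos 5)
Your proof is correct and follows essentially the same route as the paper: feasibility of local minimizers of \ref{Prob_Pen} via the first- and second-order stationarity correspondences combined with $h=f$ on $\ca{S}_{n,p}$ for one inclusion, and projection onto the manifold together with the descent estimate of Proposition \ref{Le_ortho_fval} for the other. The only cosmetic difference is that you invoke generic continuity of the polar factor near a full-rank point where the paper uses the explicit bound \eqref{Eq_Le_grad_f_upper_bound} to control $\norm{\ProjS(Y)-X^*}\ff$; both suffice.
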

\begin{proof}
	By Corollary \ref{Coro_Equivalence} and Corollary \ref{The_Equivalence_global_second_order}, we can conclude that any local minimizers of \ref{Prob_Pen} are on Stiefel manifold. 
	Since  $h(X) = f(X)$ holds for any $X \in \ca{S}_{n,p}$, then any local minimizers of \ref{Prob_Pen} are local minimizers of \ref{Prob_Ori}.  
	
	On the other hand, let $X^* \in \ca{S}_{n,p}$ be a local minimizer of \ref{Prob_Ori}, then there exists $\gamma \in \left(0, \frac{1}{12}\right)$ such that $f(Z) \geq f(X)$ holds for any $Z \in \ca{S}_{n,p}$, $\norm{Z - X^*}\ff \leq \gamma$. Then for any $Y \in \bb{R}^{n\times p}$, $\norm{Y - X^*}\ff \leq \frac{\gamma}{2}\gamma$, $ Y\in\BOM_{11\gamma/12} $, we can obtain that 
	\begin{equation*}
		\norm{\ProjS(Y) - X^*}\ff \leq \norm{\ProjS(Y) - Y}\ff + \norm{Y-X^*}\ff \leq \frac{\gamma}{2} + \frac{\gamma}{2} \leq  \gamma. 
	\end{equation*}
	Here the second inequality recalls the relationship \eqref{Eq_Le_grad_f_upper_bound}.
	Then it follows from Proposition \ref{Le_ortho_fval} that 
	\begin{equation*}
		\begin{aligned}
			h(Y) - h(X^*) ={}& h(Y) - h(\ProjS(Y)) + h(\ProjS(Y)) - h(X^*) 
			\geq{} \left( \frac{\beta}{4} -  \frac{M_1}{2} \right) \norm{Y\tp Y - I_p}\fs \geq 0,
		\end{aligned}
	\end{equation*}
	which concludes the proof. 
\end{proof}

\section{Application}
\subsection{Theoretical analysis for nonlinear conjugate gradient method}
Nonlinear conjugate gradient (CG) methods are a class of important methods for solving unconstrained optimization problems. The first CG method, called Fletcher-Reeves CG (FR-CG), was proposed in \cite{fletcher1964function}. Then various nonlinear CG methods were developed \cite{hestenes1952methods,fletcher1964function,daniel1967conjugate,polyak1969conjugate,fletcher2013practical,liu1991efficient,dai1999nonlinear,hager2005new}. Interested readers can refer to the survey \cite{hager2006survey} for details.
Recently, a number of works extend CG methods to optimization problems over the Stiefel manifold \cite{abrudan2009conjugate,sato2015new,sato2016dai,zhu2017riemannian}. These works are developed within the frameworks provided by \cite{Absil2009optimization}, and thus extensively involve retractions and parallel or vector transports. Hence, as discussed before, we are forced to comply with the low efficiency if using the parallel transports or the lack of convergence if choosing the vector transport instead.

Contrarily, applying nonlinear CG methods to minimizing \Expens over $\bb{R}^{n\times p}$  
can inherit both of the efficiency and convergence properties directly.
%
The exact penalty model \ref{Prob_Pen} provides a bridge between the unconstrained optimization approaches and the original model \ref{Prob_Ori}.
In this section, we demonstrate the power of this ``bridge" through directly applying
the FR-CG method to solve \ref{Prob_Ori} through \ref{Prob_Pen}.
First of all, we present an \ref{Prob_Pen} version of the FR-CG in Algorithm \ref{Alg:CG}.
\begin{algorithm}[htbp]
	\begin{algorithmic}[1]   
		\Require Input data:  functions $f$.
		\State Choose initial guess $X_0$ and parameters $0< \delta \leq \sigma \leq \frac{1}{2}$, set  $k:=0$, $D_0 = -\nabla h(X_0)$.
		\While{not terminate}
		\State Compute the stepsize $\eta_k$ by strong Wolfe line search \cite{hager2006survey}:
		\begin{align*}
			h(\Xk + \eta_k D_k) - h(\Xk) &\leq \delta \eta_k \inner{\nabla h(\Xk), D_k},\\
			|\inner{\nabla h(\Xk + \eta_k D_k), D_k}| &\leq -\sigma \inner{\nabla h(\Xk), D_k}.
		\end{align*}
		\State $\Xkp = \Xk + \eta_k D_k$.
		\State Compute the CG update parameter $\tau_k = \frac{\norm{\nabla h (\Xkp)}\fs }{\norm{\nabla h(\Xk)}\fs}$ .
		\State Compute the search direction $D_{k+1} = -\nabla h(\Xkp) + \tau_k D_k$.
		\State Set $k:=k+1$.
		\EndWhile
		\State Return $X_k$.
	\end{algorithmic}  
	\caption{Nonlinear FR-CG method for solving \ref{Prob_Pen}.}  
	\label{Alg:CG}
\end{algorithm}
To prove the convergence of Algorithm \ref{Alg:CG}, we first illustrate a nice property of \ref{Prob_Pen} through the following lemma.

\begin{lem}
	\label{Le_level_bounded}
	Suppose $\beta \geq 384M_0$. For any sequence $\{\Xk\}$ that satisfies $X_0 \in \BOMtiny$, $h(\Xk) \leq h(X_0)$ and $\norm{\Xkp - \Xk}\ff \leq \frac{1}{24}$ for any $k\geq 0$. Then it holds that $\{\Xk\} \subset \BOMt$.
\end{lem}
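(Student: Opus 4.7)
The plan is to argue by induction on $k$. The base case is immediate from $\BOMtiny \subset \BOMt$. For the inductive step, I would assume $X_0,\ldots,X_k \in \BOMt$ and show $X_{k+1} \in \BOMt$.

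First I would verify that $X_{k+1}$ still lives in the ``working ball'' $\Omega$. The induction hypothesis gives $\norm{X_k\tp X_k - I_p}_2 \leq \norm{X_k\tp X_k - I_p}\ff \leq 1/12$, so $\norm{X_k}_2 \leq \sqrt{13/12}$. Combining with $\norm{X_{k+1}-X_k}_2 \leq \norm{X_{k+1}-X_k}\ff \leq 1/24$, this yields
\begin{equation*}
\norm{X_{k+1}}_2 \leq \sqrt{13/12} + 1/24 \leq 13/12,
\end{equation*}
so that $X_{k+1}\in\Omega$. This placement is what unlocks the uniform constants defined over $\Omega$.

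Next I would exploit the value bound $h(X_{k+1}) \leq h(X_0)$. Writing $h(X) = g(X) + \frac{\beta}{4}\norm{X\tp X - I_p}\fs$ and rearranging gives
\begin{equation*}
\frac{\beta}{4}\norm{X_{k+1}\tp X_{k+1} - I_p}\fs \leq \bigl( g(X_0) - g(X_{k+1}) \bigr) + \frac{\beta}{4}\norm{X_0\tp X_0 - I_p}\fs.
\end{equation*}
The last term is bounded by $\beta/(4\cdot 576)$ using $X_0 \in \BOMtiny$. For the gap $g(X_0) - g(X_{k+1})$, I would observe that for every $X \in \Omega$ the singular values of $X\Apen{X}$ have the form $\sigma(3-\sigma^2)/2$ with $\sigma \leq 13/12 < \sqrt{3}$, so they all lie in $[0,1]$; hence $X\Apen{X} \in \Omega$ as well, and both $g(X_0)$ and $g(X_{k+1})$ sit in $[\inf_\Omega f,\sup_\Omega f]$. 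Consequently $g(X_0) - g(X_{k+1}) \leq M_0$. Combining the two estimates gives
\begin{equation*}
\norm{X_{k+1}\tp X_{k+1} - I_p}\fs \leq \frac{4M_0}{\beta} + \frac{1}{576},
\end{equation*}
and the threshold $\beta \geq 384 M_0$ is chosen precisely so that this right-hand side is dominated by $(1/12)^2$, closing the induction.

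The main obstacle is the careful balance of the constants: one must simultaneously verify that the step-size budget $1/24$ is small enough to retain $X_{k+1} \in \Omega$ once $X_k \in \BOMt$ (i.e.\ the $2$-norm computation $\sqrt{13/12}+1/24 \leq 13/12$), and that the slack between $\norm{X_0\tp X_0 - I_p}\ff \leq 1/24$ and the target $1/12$ is wide enough to absorb the residual $4M_0/\beta$ contributed by the energy bound. The assumption $X_0\in\BOMtiny$ rather than $\BOMt$ is essential here: it is exactly this separation of scales that allows the penalty $\beta$ to dominate without violating the final inclusion.
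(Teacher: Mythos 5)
Your proof follows essentially the same route as the paper's: induction on $k$, using the small step to keep $X_{k+1}$ in the working region and the energy bound $h(X_{k+1})\leq h(X_0)$ to force near-feasibility via the penalty term. (The paper phrases the last step as a contradiction — it shows $h(Y)-h(Z)<0$ for $Y\in\BOMtiny$ and $Z\in\Omega\setminus\BOMt$ — whereas you bound $\norm{X_{k+1}\tp X_{k+1}-I_p}\fs$ directly, but the content is identical. Your explicit check that $X\Apen{X}\in\Omega$ for $X\in\Omega$, via the singular values $\sigma(3-\sigma^2)/2\in[0,1]$, is a detail the paper leaves implicit and is worth having.)

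One concrete step does not close as written: from $\frac{\beta}{4}\norm{X_{k+1}\tp X_{k+1}-I_p}\fs\leq M_0+\frac{\beta}{4}\cdot\frac{1}{576}$ and $\beta\geq 384M_0$ you get $\norm{X_{k+1}\tp X_{k+1}-I_p}\fs\leq\frac{4M_0}{\beta}+\frac{1}{576}\leq\frac{1}{96}+\frac{1}{576}=\frac{7}{576}$, which exceeds the target $\frac{1}{144}=\frac{4}{576}$. Closing the induction requires $\frac{4M_0}{\beta}\leq\frac{1}{144}-\frac{1}{576}=\frac{1}{192}$, i.e.\ $\beta\geq 768M_0$. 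This is not a conceptual flaw — the paper's own chain of constants contains the identical factor-of-two slip (its terms $\frac{\beta}{1152}-\frac{\beta}{288}$ should read $\frac{\beta}{2304}-\frac{\beta}{576}=-\frac{\beta}{768}$) — but you should either raise the threshold to $\beta\geq 768M_0$ or flag that the stated constant does not suffice, rather than asserting that $384M_0$ is ``chosen precisely'' to make the inequality work.
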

\begin{proof}
	Firstly, for any $Y\in\BOMtiny$ and 
	$Z\in\Omega\setminus\BOMt$, we have
	\begin{equation}
		\label{Eq_Le_level_bounded_0}
		h(Y) - h(Z) < \sup_{W \in \Omega } h(W) - \inf_{W \in \Omega } h(W) + \frac{\beta}{1152}- \frac{\beta}{288} \leq M_0 -\frac{\beta}{384} \leq 0. 
	\end{equation}
	Then we prove the lemma by induction. Suppose $\{X_0, ...., X_k\} \subset \BOMt$. Then notice that $\norm{\Xkp - \Xk}\ff \leq \frac{1}{24}$, it holds that $\Xkp \in \BOMs$. Together with the fact that $h(\Xkp) \leq h(X_0)$, it directly follows from \eqref{Eq_Le_level_bounded_0} that $\Xkp \in \BOMt$. Therefore, the induction illustrates that $\{\Xk\} \subset \BOMt$, thus we complete the proof. 
\end{proof}

Lemma \ref{Le_level_bounded} guarantees that the iterates generated by any monotonic algorithm
starting from an initial point $X_0\in\BOMt$ are restricted in the region $\BOMs$ under mild conditions. Then the Step 3 in Algorithm \ref{Alg:CG} indicates that $\{h(X_k)\}$ is monotone decreasing. Then combining Lemma \ref{Le_level_bounded} and Assumption \ref{Assumption_local_f}, we conclude that the objective function $f$ satisfies the Lipschitz conditions and boundness conditions in \cite{hager2006survey}. Therefore, based on the Zoutendijk condition \cite{zoutendijk1970nonlinear}  and \cite[Theorem 4.2]{hager2006survey}, we can directly establish the follow global convergence result for Algorithm \ref{Alg:CG} and omit its proof. 
\begin{theo}
	\label{The_Convergence_CG}
	Suppose $\beta \geq \max \{384M_0, \hat{\beta}\}$. Let
	$\{(X_k,D_k)\}$ be the sequence generated by Algorithm \ref{Alg:CG} initiated from 
	$X_0\in\BOMtiny$. If 
	  the inequalities
		$$\inner{\nabla h(\Xk), D_k} <0  \quad  \text{and} \quad  \eta_k\norm{D_k}\ff \leq \frac{1}{24}$$
	hold for all $k\geq 0$ and  we further have
	\begin{eqnarray}\label{add1}
		\sum_{k = 0}^{+\infty} \frac{\inner{\nabla h(\Xk), D_k}^2}{\norm{D_k}\fs} < +\infty,
	\end{eqnarray}
	then
     
	any accumulation point of $\{\Xk\}$ is a first-order stationary point of \ref{Prob_Ori}.
\end{theo}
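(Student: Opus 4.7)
The plan is to chain the invariance result of Lemma \ref{Le_level_bounded} with the classical Fletcher--Reeves convergence theorem for Euclidean optimization, and then transfer first-order stationarity from \ref{Prob_Pen} to \ref{Prob_Ori} via Theorem \ref{The_Equivalence_local}. As the first step, I would verify that $\{X_k\} \subset \BOMt$: the Armijo part of the strong Wolfe condition combined with $\inner{\nabla h(X_k), D_k} < 0$ yields $h(X_{k+1}) \leq h(X_k)$, and together with $\eta_k \norm{D_k}\ff \leq 1/24$, $X_0 \in \BOMtiny$, and $\beta \geq 384 M_0$, Lemma \ref{Le_level_bounded} confines the trajectory to $\BOMt \subset \Omega$. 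Compactness of $\Omega$, together with Assumption \ref{Assumption_local_f} and the closed-form expression in Proposition \ref{Le_gradient_h}, then implies that $\nabla h$ is Lipschitz continuous on a neighborhood of the iterates and that $h$ is bounded below there.

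Next, the summability condition \eqref{add1} is exactly the Zoutendijk-type inequality required by \cite[Theorem 4.2]{hager2006survey}. Applied to Algorithm \ref{Alg:CG}, that theorem yields $\liminf_{k\to\infty}\norm{\nabla h(X_k)}\ff = 0$. Let $X^*$ be an accumulation point of $\{X_k\}$; passing to a subsequence along which $X_{k_j}\to X^*$ and $\norm{\nabla h(X_{k_j})}\ff \to 0$ (extracted by boundedness together with the $\liminf$), continuity of $\nabla h$ gives $\nabla h(X^*) = 0$, so $X^*$ is a first-order stationary point of \ref{Prob_Pen}.

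Finally, Theorem \ref{The_Equivalence_local}, applied at $X^*\in\BOMt\subset\Omega$ with $\beta\geq \hat\beta \geq \bar\beta$, leaves only two possibilities: either $X^*\in\ca{S}_{n,p}$ with $\grad f(X^*)=0$, or $\sigma_{\min}(X^*)\leq \sqrt{2M_1/\beta}$. The latter is impossible, since $X^*\in\BOMt$ forces $\sigma_{\min}(X^*)^2 \geq 11/12$ whereas $\beta\geq 12M_1$ gives $2M_1/\beta\leq 1/6$. Hence $X^*$ must be a first-order stationary point of \ref{Prob_Ori}. The main obstacle in the argument is the invocation of \cite[Theorem 4.2]{hager2006survey}: its proof rests on the classical inductive ratio bound $c_1\norm{\nabla h(X_k)}\fs \leq |\inner{\nabla h(X_k), D_k}| \leq c_2\norm{\nabla h(X_k)}\fs$ derived from the FR update $\tau_k = \norm{\nabla h(X_{k+1})}\fs/\norm{\nabla h(X_k)}\fs$ together with strong Wolfe at $\sigma\leq 1/2$. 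This is a purely Euclidean-space argument, which is precisely why recasting Stiefel optimization as the unconstrained problem \ref{Prob_Pen} allows its direct application without any manifold-specific adjustment.
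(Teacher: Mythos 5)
Your argument follows the same route the paper itself sketches (and then omits): Lemma \ref{Le_level_bounded} confines the iterates to $\BOMt$, so the Lipschitz and boundedness hypotheses of \cite[Theorem 4.2]{hager2006survey} hold, the Zoutendijk-type condition \eqref{add1} yields $\liminf_{k}\norm{\nabla h(X_k)}\ff=0$, and Theorem \ref{The_Equivalence_local} transfers stationarity back to \ref{Prob_Ori} after the small-$\sigma_{\min}$ branch is excluded by $X^*\in\BOMt$. The only caveat---which you share with the paper's own statement---is that a $\liminf$ result certifies stationarity only of accumulation points of the gradient-vanishing subsequence, not of an arbitrary accumulation point of $\{X_k\}$, so your parenthetical ``extracted by boundedness together with the $\liminf$'' glosses over exactly the same gap the paper does.
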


\begin{rmk}
	Algorithm \ref{Alg:CG} and Theorem \ref{The_Convergence_CG} 
	take FR-CG as example. In fact, if we update the parameter sequence $\{\tau_k\}$ 
	by the PRP \cite{polyak1969conjugate}, DY \cite{dai1999nonlinear}, or HS \cite{hestenes1952methods} formulas,
	we can obtain similar global convergence properties as well. 
	Interested readers are referred 
	to the survey paper \cite{hager2006survey} for details. 
\end{rmk}

\begin{rmk}
	The condition \eqref{add1} in Theorem \ref{The_Convergence_CG} refers to the Zoutendijk condition \cite{zoutendijk1970nonlinear,hager2006survey}, which is sufficient for the global convergence properties for a great number of nonlinear conjugate gradient methods.  
\end{rmk}
	
\begin{rmk}
	It is worth mentioning that a small penalty parameter $\beta$ may  lead to the failure of convergence, while a large penalty parameter may result in a large condition number, thus lead to slow convergence rate. Several existing works on developing penalty methods for
	\ref{Prob_Ori} have suggested some practically useful choice of the penalty parameter $\beta$,  interested readers could refer to \cite{gao2019parallelizable,xiao2020class,xiao2020l21} for details. 
\end{rmk}

\subsection{Numerical experiments}
In this section, we numerically demonstrate the power of the ``bridge", provided by ExPen, between the unconstrained optimization approaches and the original model OCP.
All the numerical experiments in this section are run in serial in a platform with AMD Ryzen 5800H CPU and 16GB RAM under Ubuntu 18.10 running Python 3.7.0 and Numpy 1.20.0 \cite{numpy2020array}. 

We choose nonlinear eigenvalue problem and the Brockett function minimization as the test problems. The details of
how to construct the test instances are described in the following two subsections, respectively.

In the presented experiments, we set the penalty parameter $\beta$ in \ref{Prob_Pen} as suggested in \cite{gao2019parallelizable,xiao2020class,xiao2020l21}, i.e. $\beta = \norm{\nabla f(X_0)}\ff/10$, where $X_0$ is the initial point. Besides, we choose the nonlinear conjugate gradient solver \cite{fletcher1964function,nocedal2006numerical} provided in the package SciPy 1.6.3  \cite{scipy2020SciPyNMeth} to minimize \ref{Prob_Pen} in $\bb{R}^{n\times p}$. This optimization approach is referred as ExPen-CG. We terminate ExPen-CG when $\norm{\nabla h(\Xk)}\ff \leq 10^{-3}$, or the maximum number of iterations exceeds $10000$, while keeping all the other parameters as their default values
in the package. 

For comparison,  we first select the Riemannian conjugate gradient (RCG) solver from the package PyManopt (version 0.2.5) \cite{townsend2016pymanopt}. RCG is one of the state-of-the-art Riemannian solvers in Python platform. Furthermore, we also choose some state-of-the-art infeasible optimization solvers into the comparisons. These solvers include PCAL \cite{gao2019parallelizable}, PenC \cite{xiao2020class} and SLPG \cite{xiao2021penalty} from the STOP package \cite{stop2022website}.  We terminate these solvers when the maximum number of iterations exceeds $10000$ and set the tolerance for gradient as $10^{-3}$.  Meanwhile, we set the other parameters in these solvers by default. Furthermore, for the final solution $\tilde{X}$ generated by all the compared algorithms, we project $\tilde{X}$ onto the Stiefel manifold as the post-processing step employed in  \cite{gao2019parallelizable,xiao2020class,xiao2020l21,xiao2021penalty}.  

\subsubsection{Nonlinear eigenvalue problems}
In this subsection, we test the performance of all the compared solves in solving a class of nonlinear eigenvalue problems arisen from electronic structure calculation \cite{liu2014Convergence,yang2009Convergence,cai2018eigenvector},
\begin{equation}\label{neg}
	\begin{aligned}
		\min_{X \in \bb{R}^{n\times p}} \quad &f(X) = \frac{1}{2} \tr\left( X\tp LX \right) + \frac{\alpha}{4} \rho_X\tp L^{\dagger} \rho_X\\
		\text{s. t.} \quad & X\tp X = I_p,
	\end{aligned}
\end{equation}
where $\rho_X:= \mathrm{diag}(XX\tp)$, $L$ is a tridiagonal matrix with $2$ as diagonal entries and $-1$ as subdiagonal entries. Besides, $L^\dagger$ refers to the pseudo-inverse of $L$. 
We initiate all the compared solvers at the same initial point, which is randomly generated over $\ca{S}_{n,p}$. Table \ref{Table_noeig_n} and Table \ref{Table_noeig_p} illustrate the performance of all the compared algorithms  in solving problem \ref{neg}
with different combinations of problem parameters $n$, $p$. Here, we run each instance for $10$ times and present the averaged results. We can learn from Table \ref{Table_noeig_n} and Table \ref{Table_noeig_p}  
that all the compared solvers reach similar function values while ExPen-CG is comparable with the state-of-the-art solvers. Remarkably, ExPen-CG outperforms RCG in terms of CPU time and iterations.

\begin{longtable}{@{}cccccc@{}}
	\toprule
	Solver                        & Fval    & Iteration & Stationarity & Feasibility & CPU time(s) \\* \midrule
	\endfirsthead
	\multicolumn{6}{c}%
	{{\bfseries Table \thetable\ continued from previous page}} \\
	\endhead
	\bottomrule
	\endfoot
	\endlastfoot
	\multicolumn{6}{c}{$n= 250 , p= 50$}    \\ \midrule 
	\multicolumn{1}{c|}{ExPen-CG} & 2.810709e+03 & 567.5 & 5.63e-04 & 1.79e-14 &  1.16  \\  
	\multicolumn{1}{c|}{PCAL} & 2.810709e+03 & 1528.3 & 8.44e-04 & 9.83e-15 &  1.54  \\  
	\multicolumn{1}{c|}{PenC} & 2.810709e+03 & 1211.4 & 8.00e-04 & 1.01e-14 &  1.21  \\  
	\multicolumn{1}{c|}{SLPG} & 2.810709e+03 & 1275.3 & 8.40e-04 & 9.97e-15 &  1.25  \\  
	\multicolumn{1}{c|}{RCG} & 2.810709e+03 & 1103.1 & 9.78e-04 & 5.49e-15 &  4.28  \\ \hline 
	\multicolumn{6}{c}{$n= 500, p= 50$}    \\ \midrule 
	\multicolumn{1}{c|}{ExPen-CG} & 2.810709e+03 & 632.6 & 6.83e-04 & 1.92e-14 &  3.52  \\  
	\multicolumn{1}{c|}{PCAL} & 2.810709e+03 & 1739.5 & 8.67e-04 & 1.00e-14 &  4.64  \\  
	\multicolumn{1}{c|}{PenC} & 2.810709e+03 & 1486.1 & 9.04e-04 & 9.99e-15 &  3.82  \\  
	\multicolumn{1}{c|}{SLPG} & 2.810709e+03 & 1274.7 & 8.16e-04 & 9.77e-15 &  3.67  \\  
	\multicolumn{1}{c|}{RCG} & 2.810709e+03 & 1111.7 & 9.78e-04 & 5.85e-15 &  9.96  \\ \hline 
	\multicolumn{6}{c}{$n= 1000, p= 50$}    \\ \midrule 
	\multicolumn{1}{c|}{ExPen-CG} & 2.810709e+03 & 715.6 & 9.35e-04 & 2.06e-14 &  5.14  \\  
	\multicolumn{1}{c|}{PCAL} & 2.810709e+03 & 1849.6 & 9.72e-04 & 1.07e-14 &  7.61  \\  
	\multicolumn{1}{c|}{PenC} & 2.810709e+03 & 1580.9 & 9.05e-04 & 1.09e-14 &  6.18  \\  
	\multicolumn{1}{c|}{SLPG} & 2.810709e+03 & 1303.6 & 7.69e-04 & 1.11e-14 &  6.12  \\  
	\multicolumn{1}{c|}{RCG} & 2.810709e+03 & 1492.2 & 9.82e-04 & 7.80e-15 &  18.53  \\ \hline 
	\multicolumn{6}{c}{$n= 1500, p= 50$}    \\ \midrule 
	\multicolumn{1}{c|}{ExPen-CG} & 2.810709e+03 & 787.0 & 7.88e-04 & 2.22e-14 &  7.29  \\  
	\multicolumn{1}{c|}{PCAL} & 2.810709e+03 & 1680.7 & 9.24e-04 & 1.13e-14 &  9.70  \\  
	\multicolumn{1}{c|}{PenC} & 2.810709e+03 & 1651.8 & 9.29e-04 & 1.10e-14 &  9.00  \\  
	\multicolumn{1}{c|}{SLPG} & 2.810709e+03 & 1281.3 & 8.54e-04 & 1.10e-14 &  8.51  \\  
	\multicolumn{1}{c|}{RCG} & 2.810709e+03 & 1206.9 & 9.83e-04 & 8.32e-15 &  20.17  \\ \hline 
	\multicolumn{6}{c}{$n= 2000, p= 50$}    \\ \midrule 
	\multicolumn{1}{c|}{ExPen-CG} & 2.810709e+03 & 866.4 & 7.52e-04 & 2.35e-14 &  10.14  \\  
	\multicolumn{1}{c|}{PCAL} & 2.810709e+03 & 1863.0 & 8.45e-04 & 1.16e-14 &  14.17  \\  
	\multicolumn{1}{c|}{PenC} & 2.810709e+03 & 1619.0 & 9.18e-04 & 1.15e-14 &  11.78  \\  
	\multicolumn{1}{c|}{SLPG} & 2.810709e+03 & 1583.1 & 8.96e-04 & 1.14e-14 &  10.93  \\  
	\multicolumn{1}{c|}{RCG} & 2.810709e+03 & 1140.2 & 9.85e-04 & 8.43e-15 &  24.49  \\ \hline 
	\bottomrule[.4mm] 
	
	\caption{The results of the nonlinear eigenvalue problems with varying $n$.}
	\label{Table_noeig_n}
\end{longtable}

\begin{longtable}{@{}cccccc@{}}
	\toprule
	Solver                        & Fval    & Iteration & Stationarity & Feasibility & CPU time(s) \\* \midrule
	\endfirsthead
	\multicolumn{6}{c}%
	{{\bfseries Table \thetable\ continued from previous page}} \\
	\endhead
	\bottomrule
	\endfoot
	\endlastfoot
	\multicolumn{6}{c}{$n= 1000, p= 10$}    \\ \midrule 
	\multicolumn{1}{c|}{ExPen-CG} & 3.570857e+01 & 165.1 & 6.52e-04 & 5.74e-15 &  0.23  \\  
	\multicolumn{1}{c|}{PCAL} & 3.570857e+01 & 331.9 & 8.36e-04 & 3.03e-15 &  0.22  \\  
	\multicolumn{1}{c|}{PenC} & 3.570857e+01 & 295.9 & 7.34e-04 & 3.42e-15 &  0.19  \\  
	\multicolumn{1}{c|}{SLPG} & 3.570857e+01 & 266.8 & 8.64e-04 & 3.30e-15 &  0.13  \\  
	\multicolumn{1}{c|}{RCG} & 3.570857e+01 & 179.7 & 9.68e-04 & 2.43e-15 &  0.43  \\ \hline 
	\multicolumn{6}{c}{$n= 1000, p= 30$}    \\ \midrule 
	\multicolumn{1}{c|}{ExPen-CG} & 6.482086e+02 & 434.7 & 7.39e-04 & 1.07e-14 &  2.11  \\  
	\multicolumn{1}{c|}{PCAL} & 6.482086e+02 & 860.0 & 7.21e-04 & 7.09e-15 &  1.74  \\  
	\multicolumn{1}{c|}{PenC} & 6.482086e+02 & 846.7 & 8.77e-04 & 7.19e-15 &  1.61  \\  
	\multicolumn{1}{c|}{SLPG} & 6.482086e+02 & 773.3 & 7.44e-04 & 7.11e-15 &  1.30  \\  
	\multicolumn{1}{c|}{RCG} & 6.482086e+02 & 596.3 & 9.90e-04 & 5.31e-15 &  5.26  \\ \hline 
	\multicolumn{6}{c}{$n= 1000, p= 50$}    \\ \midrule 
	\multicolumn{1}{c|}{ExPen-CG} & 2.810709e+03 & 752.3 & 6.95e-04 & 2.06e-14 &  5.51  \\  
	\multicolumn{1}{c|}{PCAL} & 2.810709e+03 & 1845.1 & 9.58e-04 & 1.09e-14 &  7.56  \\  
	\multicolumn{1}{c|}{PenC} & 2.810709e+03 & 1481.5 & 8.77e-04 & 1.08e-14 &  5.78  \\  
	\multicolumn{1}{c|}{SLPG} & 2.810709e+03 & 1394.7 & 7.66e-04 & 1.10e-14 &  5.07  \\  
	\multicolumn{1}{c|}{RCG} & 2.810709e+03 & 1019.1 & 9.77e-04 & 7.85e-15 &  17.62  \\ \hline 
	\multicolumn{6}{c}{$n= 1000, p= 70$}    \\ \midrule 
	\multicolumn{1}{c|}{ExPen-CG} & 7.523209e+03 & 1111.8
	
	 & 7.15e-04 & 2.34e-14 &  14.15  \\  
	\multicolumn{1}{c|}{PCAL} & 7.523209e+03 & 3612.8 & 1.61e-03 & 1.35e-14 &  19.82  \\  
	\multicolumn{1}{c|}{PenC} & 7.523209e+03 & 2928.3 & 8.91e-04 & 1.36e-14 &  15.10  \\  
	\multicolumn{1}{c|}{SLPG} & 7.523209e+03 & 2439.5 & 9.46e-04 & 1.37e-14 &  15.11  \\  
	\multicolumn{1}{c|}{RCG} & 7.523209e+03 & 2248.9 & 9.84e-04 & 1.06e-14 &  52.19  \\ \hline 
	\multicolumn{6}{c}{$n= 1000, p= 100$}    \\ \midrule 
	\multicolumn{1}{c|}{ExPen-CG} & 2.156071e+04 & 1639.7 & 7.49e-04 & 3.50e-14 &  29.80  \\  
	\multicolumn{1}{c|}{PCAL} & 2.156071e+04 & 4775.8 & 2.77e-02 & 1.74e-14 &  39.30  \\  
	\multicolumn{1}{c|}{PenC} & 2.156071e+04 & 4499.7 & 5.82e-03 & 1.76e-14 &  35.53  \\  
	\multicolumn{1}{c|}{SLPG} & 2.156071e+04 & 4642.2 & 8.88e-04 & 1.72e-14 &  34.73  \\  
	\multicolumn{1}{c|}{RCG} & 2.156071e+04 & 2871.3 & 1.20e-03 & 1.50e-14 &  102.80  \\ \hline 
	\bottomrule[.4mm] 
	
	\caption{The results of the nonlinear eigenvalue problems with varying $p$.}
	\label{Table_noeig_p}
\end{longtable}

Furthermore, we exhibit the convergence curves of ExPen-CG in the aspect of function value gap evaluated by $f(\Xk) - f(X^*)$, the stationarity $\norm{\nabla f(\Xk)}$ and the feasibility $\norm{\Xk\tp \Xk - I_p}\ff$. Here $f(X^*)$ is computed by RCG solver from PyManopt package that satisfies  $\norm{\grad f(X^*)}_F \leq 10^{12}$.  In Figure \ref{Fig_cc_noeig}, we present the curves of ExPen-CG under different combination of the parameters.  From Figure \ref{Fig:Noeig_fval_0}, \ref{Fig:Noeig_fval_1} and \ref{Fig:Noeig_fval_2}, we can observe that the sequence $\{\Xk\}$ generated by ExPen-CG achieves almost the same function values as RCG. Moreover, Figure \ref{Fig:Noeig_feas_0}, \ref{Fig:Noeig_feas_1} and \ref{Fig:Noeig_feas_2} illustrate that the sequence generated by ExPen-CG converges towards $\ca{S}_{n,p}$. 
\begin{figure}[!htbp]
	\centering
	\subfigure[$(n, p) = (250, 50)$]{
		\begin{minipage}[t]{0.33\linewidth}
			\centering
			\includegraphics[width=\linewidth]{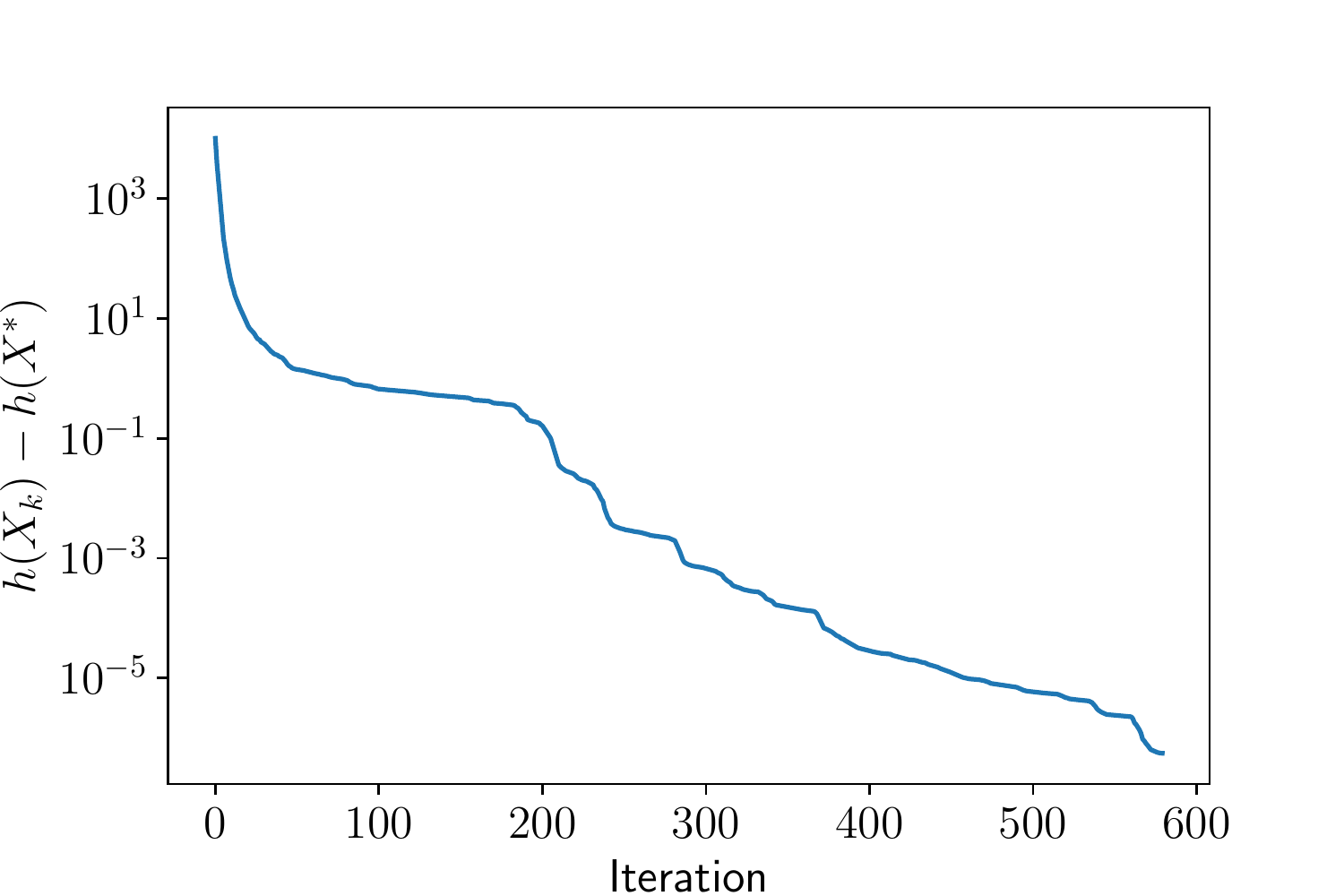}
			\label{Fig:Noeig_fval_0}
		\end{minipage}%
	}%
	\subfigure[$(n, p) = (250, 50)$]{
		\begin{minipage}[t]{0.33\linewidth}
			\centering
			\includegraphics[width=\linewidth]{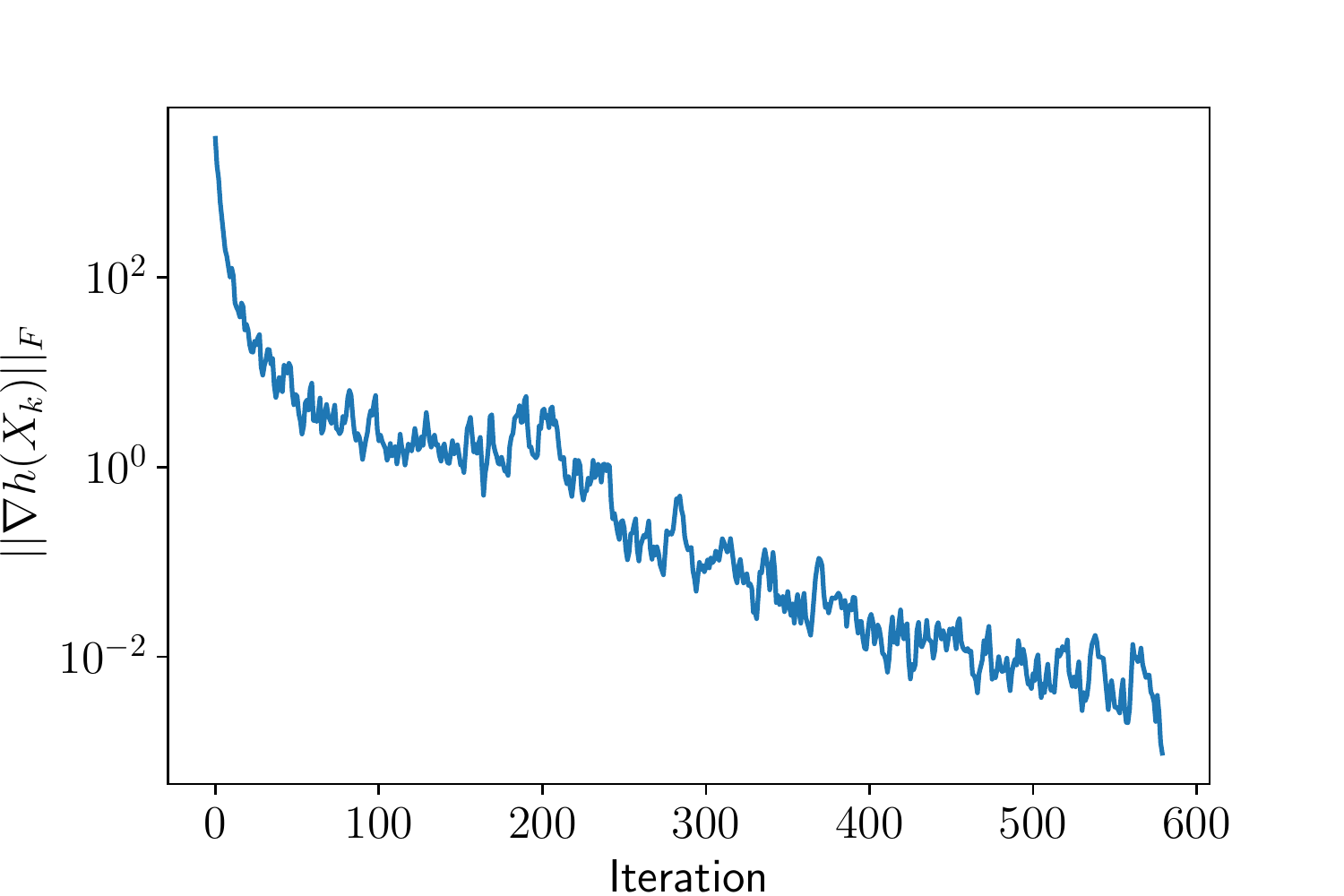}
			\label{Fig:Noeig_grad_0}
		\end{minipage}%
	}%
	\subfigure[$(n, p) = (250, 50)$]{
		\begin{minipage}[t]{0.33\linewidth}
			\centering
			\includegraphics[width=\linewidth]{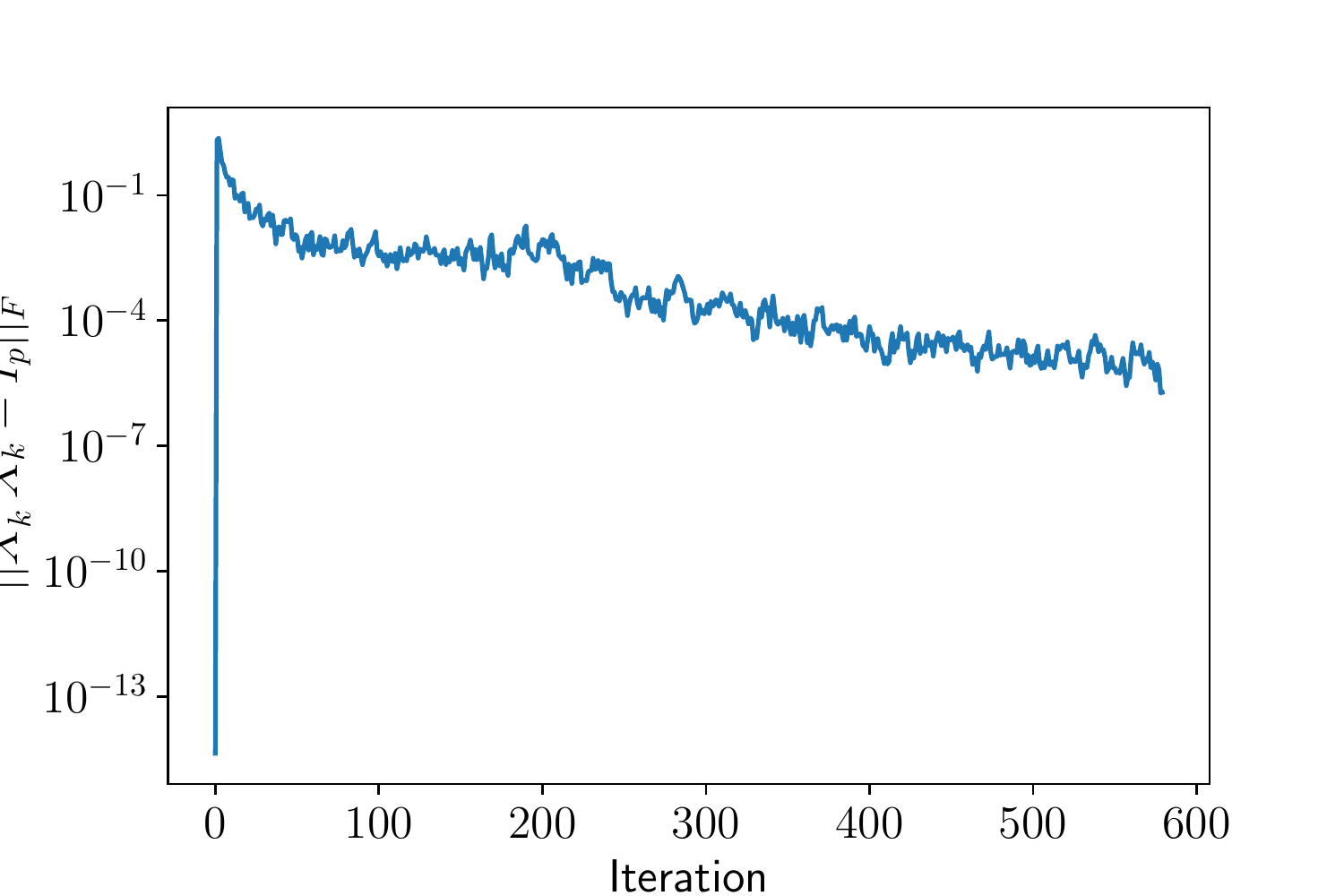}
			\label{Fig:Noeig_feas_0}
		\end{minipage}%
	}%
	
	\subfigure[$(n, p) = (2000, 50)$]{
		\begin{minipage}[t]{0.33\linewidth}
			\centering
			\includegraphics[width=\linewidth]{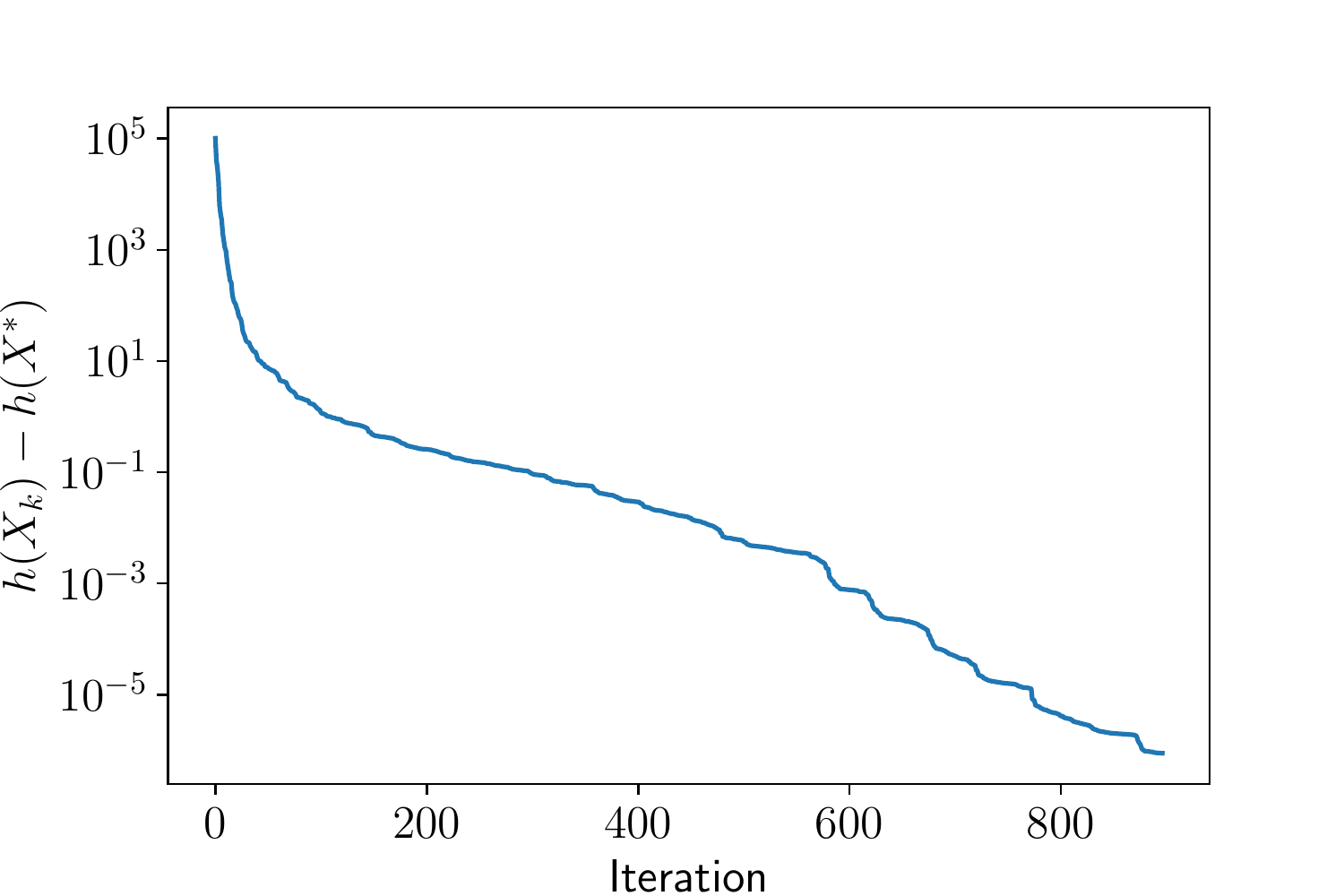}
			\label{Fig:Noeig_fval_1}
		\end{minipage}%
	}%
	\subfigure[$(n, p) = (2000, 50)$]{
		\begin{minipage}[t]{0.33\linewidth}
			\centering
			\includegraphics[width=\linewidth]{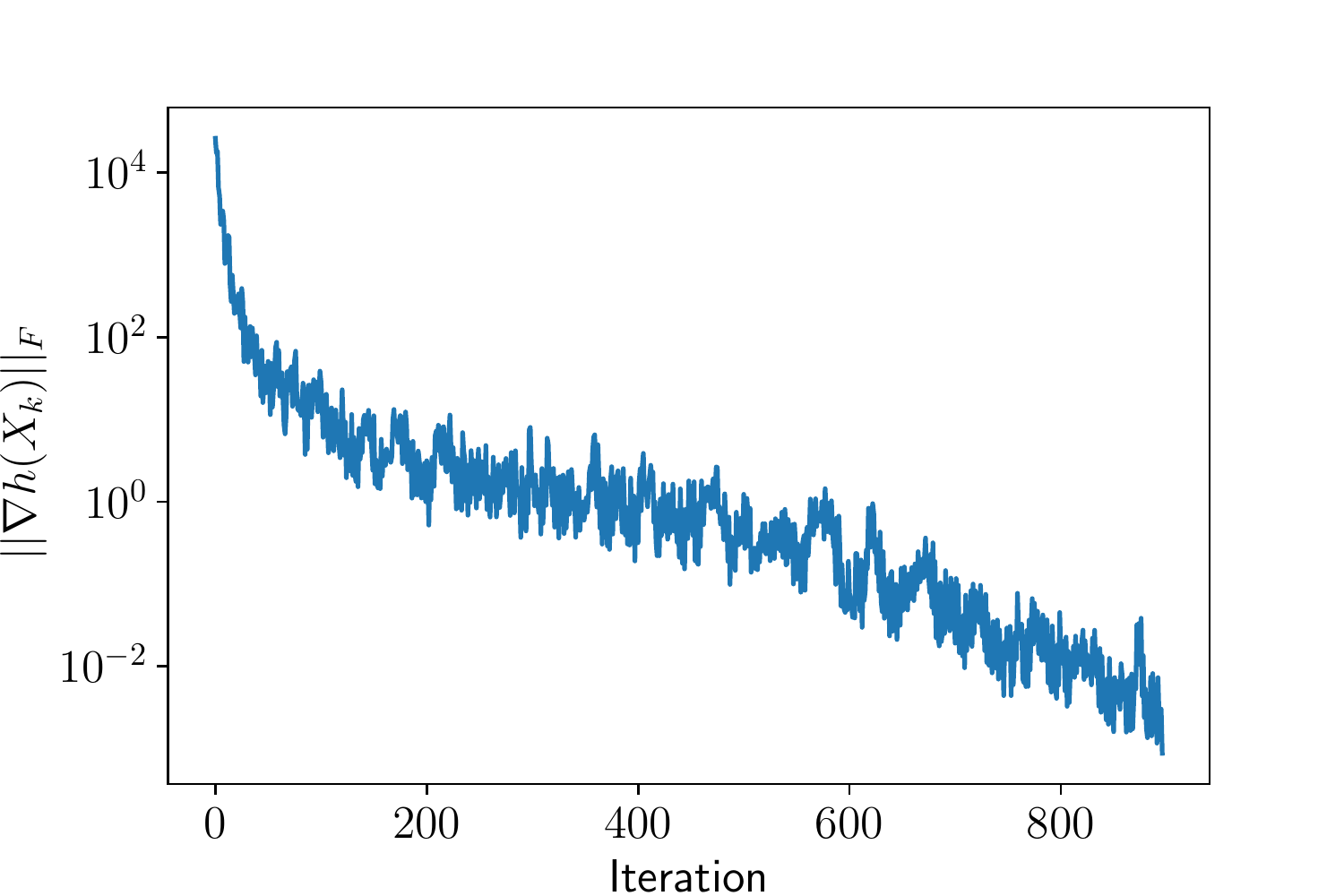}
			\label{Fig:Noeig_grad_1}
		\end{minipage}%
	}%
	\subfigure[$(n, p) = (2000, 50)$]{
		\begin{minipage}[t]{0.33\linewidth}
			\centering
			\includegraphics[width=\linewidth]{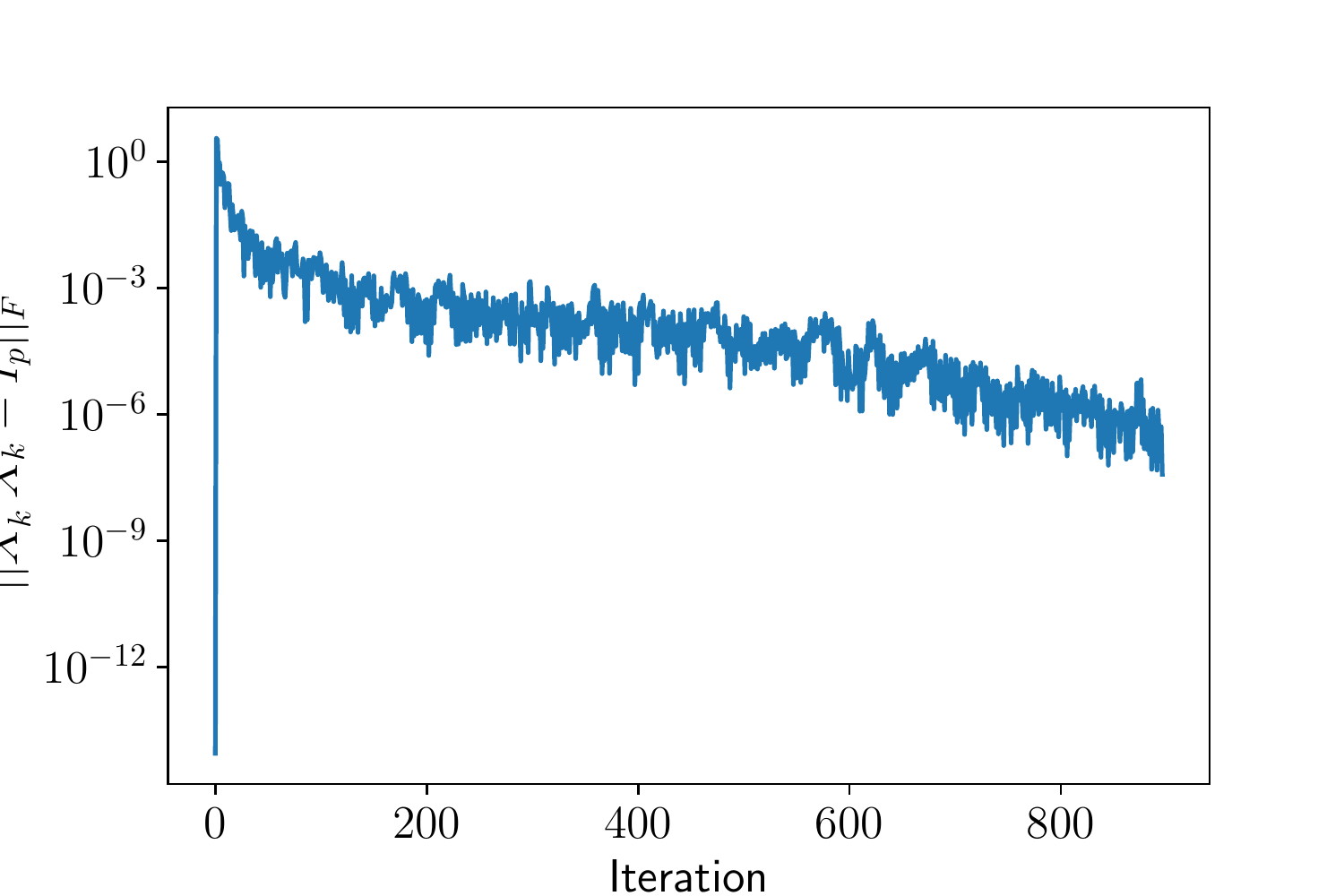}
			\label{Fig:Noeig_feas_1}
		\end{minipage}%
	}%

	\subfigure[$(n, p) = (1000, 100)$]{
		\begin{minipage}[t]{0.33\linewidth}
			\centering
			\includegraphics[width=\linewidth]{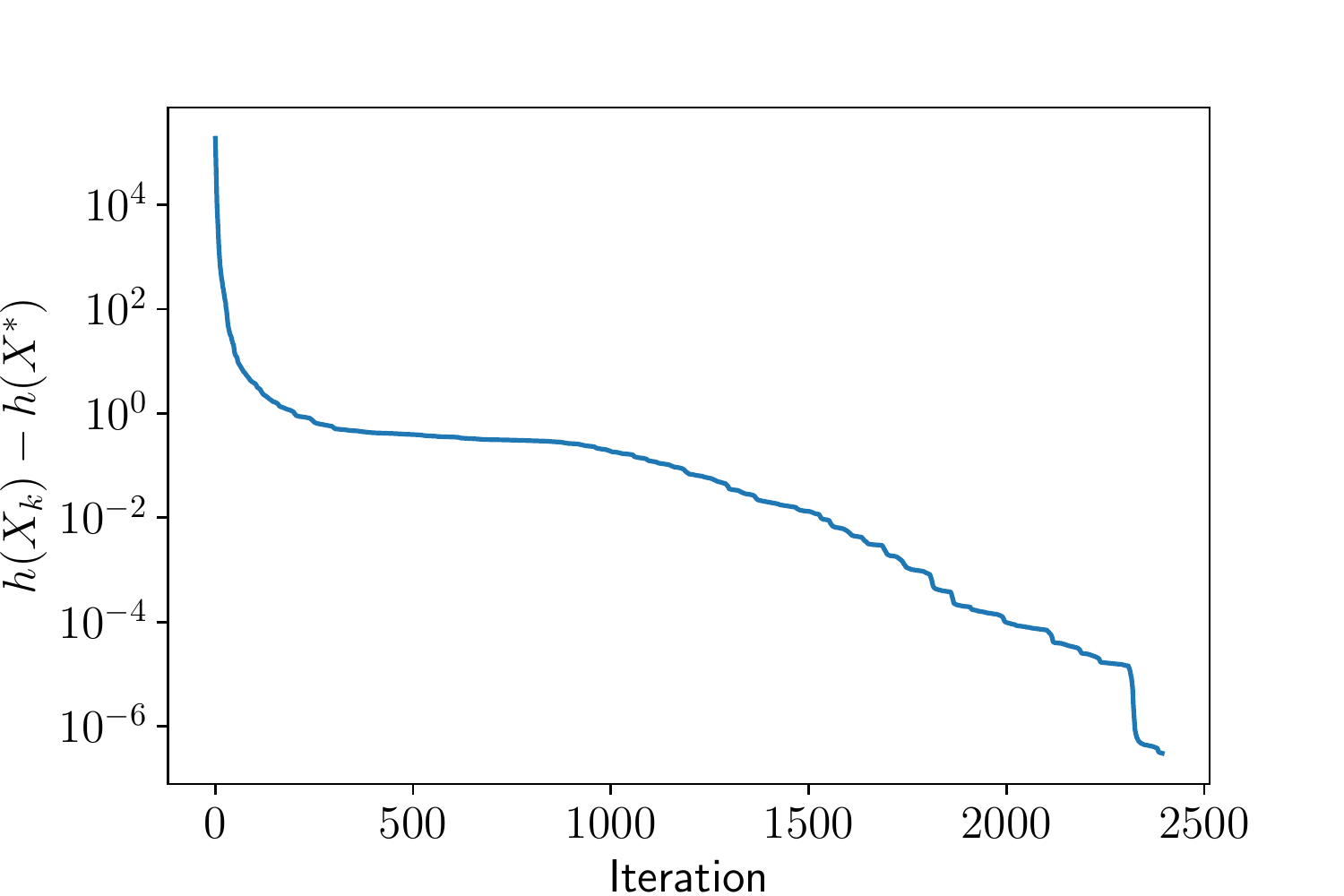}
			\label{Fig:Noeig_fval_2}
		\end{minipage}%
	}%
	\subfigure[$(n, p) = (1000, 100)$]{
		\begin{minipage}[t]{0.33\linewidth}
			\centering
			\includegraphics[width=\linewidth]{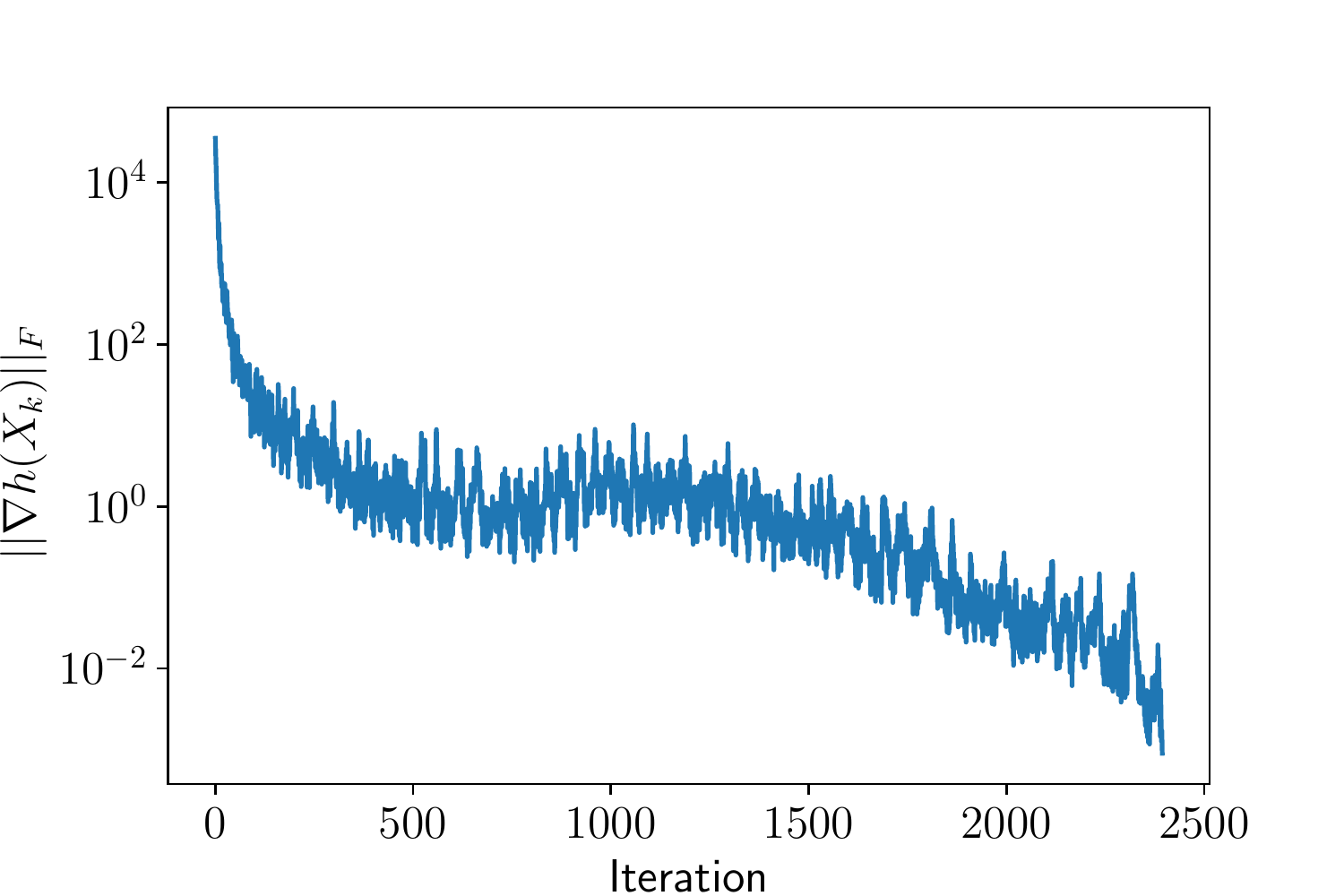}
			\label{Fig:Noeig_grad_2}
		\end{minipage}%
	}%
	\subfigure[$(n, p) = (1000, 100)$]{
		\begin{minipage}[t]{0.33\linewidth}
			\centering
			\includegraphics[width=\linewidth]{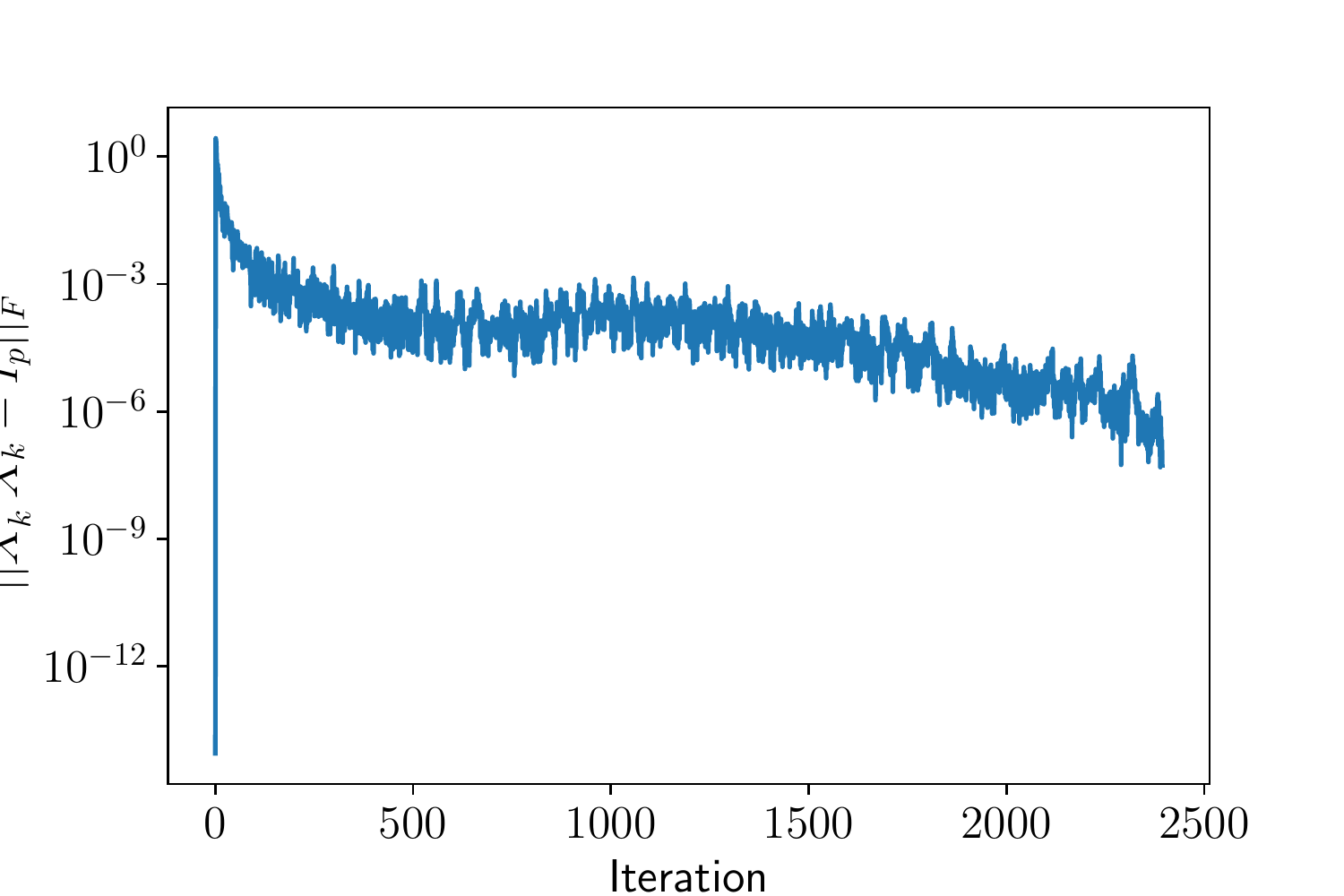}
			\label{Fig:Noeig_feas_2}
		\end{minipage}%
	}%
	\caption{The convergence curves of ExPen-CG on nonlinear eigenvalue problems.}
	\label{Fig_cc_noeig}
\end{figure}

\subsubsection{Brockett function minimization problems}
In this subsection, we test the numerical performance of all the tested algorithms on minimizing the Brockett function over the Stiefel manifold \cite{wang2020multipliers},
\begin{equation}
	\begin{aligned}
		\min_{X \in \bb{R}^{n\times p}} \quad &f(X) = \frac{1}{2} \tr\left( X\tp BXC \right)\\
		\text{s. t.} \quad & X\tp X = I_p,
	\end{aligned}
\end{equation}
where $B \in \bb{R}^{n\times n}$ and $C \in \bb{R}^{p \times p}$ are two randomly generated symmetric matrices. 
We initiate ExPen-CG and RCG at the same point, which is randomly generated over $\ca{S}_{n,p}$. Table \ref{Table_broc_n} and Table \ref{Table_broc_p} illustrate the performance of these compared algorithms with different combinations of problem parameters $n$ and $p$. Here, we run each instance for $10$ times and present the averaged results. We observe that all the compared algorithms achieve almost the same function values, and ExPen-CG achieves comparable performance as all the compared algorithms. In particular, ExPen-CG outperforms RCG in all the test instances.  

\begin{longtable}{@{}cccccc@{}}
	\toprule
	Solver                        & Fval    & Iteration & Stationarity & Feasibility & CPU time(s) \\* \midrule
	\endfirsthead
	\multicolumn{6}{c}%
	{{\bfseries Table \thetable\ continued from previous page}} \\
	\endhead
	\bottomrule
	\endfoot
	\endlastfoot
	\multicolumn{6}{c}{$n= 250, p= 50$}    \\ \midrule 
	\multicolumn{1}{c|}{ExPen-CG} & -9.611694e+00 & 962.7 & 7.97e-04 & 1.64e-14 &  2.99  \\  
	\multicolumn{1}{c|}{PCAL} & -9.610215e+00 & 2209.5 & 9.96e-04 & 8.46e-15 &  3.07  \\  
	\multicolumn{1}{c|}{PenC} & -9.610213e+00 & 2326.1 & 9.93e-04 & 8.43e-15 &  3.26  \\  
	\multicolumn{1}{c|}{SLPG} & -9.610306e+00 & 1848.4 & 9.46e-04 & 8.27e-15 &  2.45  \\  
	\multicolumn{1}{c|}{RCG} & -9.613055e+00 & 2417.4 & 9.92e-04 & 3.84e-15 &  12.39  \\ \hline 
	\multicolumn{6}{c}{$n= 500, p= 50$}    \\ \midrule 
	\multicolumn{1}{c|}{ExPen-CG} & -9.570277e+00 & 859.4 & 7.45e-04 & 1.69e-14 &  6.97  \\  
	\multicolumn{1}{c|}{PCAL} & -9.568621e+00 & 1917.1 & 9.90e-04 & 8.20e-15 &  6.81  \\  
	\multicolumn{1}{c|}{PenC} & -9.568620e+00 & 2176.8 & 9.90e-04 & 8.29e-15 &  7.61  \\  
	\multicolumn{1}{c|}{SLPG} & -9.568844e+00 & 1844.6 & 9.44e-04 & 8.11e-15 &  6.15  \\  
	\multicolumn{1}{c|}{RCG} & -9.571521e+00 & 2172.6 & 9.96e-04 & 3.15e-15 &  24.76  \\ \hline 
	\multicolumn{6}{c}{$n= 1000, p= 50$}    \\ \midrule 
	\multicolumn{1}{c|}{ExPen-CG} & -1.056750e+01 & 816.8 & 7.55e-04 & 1.69e-14 &  9.17  \\  
	\multicolumn{1}{c|}{PCAL} & -1.056576e+01 & 1789.7 & 9.91e-04 & 8.22e-15 &  10.03  \\  
	\multicolumn{1}{c|}{PenC} & -1.056577e+01 & 2190.5 & 9.90e-04 & 8.20e-15 &  11.79  \\  
	\multicolumn{1}{c|}{SLPG} & -1.056600e+01 & 1637.0 & 9.48e-04 & 7.97e-15 &  10.81  \\  
	\multicolumn{1}{c|}{RCG} & -1.056893e+01 & 3078.1 & 9.94e-04 & 3.07e-15 &  49.66  \\ \hline 
	\multicolumn{6}{c}{$n= 1500, p= 50$}    \\ \midrule 
	\multicolumn{1}{c|}{ExPen-CG} & -1.059979e+01 & 723.9 & 7.62e-04 & 1.63e-14 &  12.14  \\  
	\multicolumn{1}{c|}{PCAL} & -1.059830e+01 & 1632.3 & 9.87e-04 & 7.80e-15 &  14.00  \\  
	\multicolumn{1}{c|}{PenC} & -1.059828e+01 & 2004.8 & 9.96e-04 & 7.87e-15 &  16.55  \\  
	\multicolumn{1}{c|}{SLPG} & -1.059850e+01 & 1518.6 & 9.28e-04 & 7.90e-15 &  12.49  \\  
	\multicolumn{1}{c|}{RCG} & -1.060131e+01 & 2845.3 & 9.96e-04 & 2.79e-15 &  66.11  \\ \hline 
	\multicolumn{6}{c}{$n= 2000, p= 50$}    \\ \midrule 
	\multicolumn{1}{c|}{ExPen-CG} & -1.098884e+01 & 712.2 & 7.63e-04 & 1.67e-14 &  16.49  \\  
	\multicolumn{1}{c|}{PCAL} & -1.098736e+01 & 1584.2 & 9.81e-04 & 7.97e-15 &  19.29  \\  
	\multicolumn{1}{c|}{PenC} & -1.098741e+01 & 1837.1 & 9.67e-04 & 7.93e-15 &  21.78  \\  
	\multicolumn{1}{c|}{SLPG} & -1.098743e+01 & 1514.2 & 9.61e-04 & 7.80e-15 &  15.59  \\  
	\multicolumn{1}{c|}{RCG} & -1.099041e+01 & 3106.6 & 9.94e-04 & 2.61e-15 &  101.91  \\ \hline 
	\bottomrule[.4mm] 
	
	\caption{The results of the Brockett function minimization problems with varying $n$.}
	\label{Table_broc_n}
\end{longtable}

\begin{longtable}{@{}cccccc@{}}
	\toprule
	Solver                        & Fval    & Iteration & Stationarity & Feasibility & CPU time(s) \\* \midrule
	\endfirsthead
	\multicolumn{6}{c}%
	{{\bfseries Table \thetable\ continued from previous page}} \\
	\endhead
	\bottomrule
	\endfoot
	\endlastfoot
	\multicolumn{6}{c}{$n= 1000, p= 10$}    \\ \midrule 
	\multicolumn{1}{c|}{ExPen-CG} & -2.551708e+00 & 343.1 & 6.20e-04 & 4.62e-15 &  0.86  \\  
	\multicolumn{1}{c|}{PCAL} & -2.551070e+00 & 623.4 & 9.50e-04 & 2.19e-15 &  0.98  \\  
	\multicolumn{1}{c|}{PenC} & -2.551053e+00 & 675.0 & 9.78e-04 & 1.90e-15 &  1.01  \\  
	\multicolumn{1}{c|}{SLPG} & -2.551101e+00 & 582.3 & 9.41e-04 & 2.21e-15 &  0.87  \\  
	\multicolumn{1}{c|}{RCG} & -2.552008e+00 & 1278.9 & 9.91e-04 & 1.09e-15 &  5.69  \\ \hline 
	\multicolumn{6}{c}{$n= 1000, p= 30$}    \\ \midrule 
	\multicolumn{1}{c|}{ExPen-CG} & -6.479578e+00 & 576.9 & 6.95e-04 & 9.92e-15 &  3.16  \\  
	\multicolumn{1}{c|}{PCAL} & -6.478547e+00 & 1345.6 & 9.89e-04 & 4.87e-15 &  3.99  \\  
	\multicolumn{1}{c|}{PenC} & -6.478539e+00 & 1553.4 & 9.94e-04 & 5.10e-15 &  4.42  \\  
	\multicolumn{1}{c|}{SLPG} & -6.478541e+00 & 1289.7 & 9.72e-04 & 4.86e-15 &  4.81  \\  
	\multicolumn{1}{c|}{RCG} & -6.480546e+00 & 2474.2 & 9.95e-04 & 2.11e-15 &  22.56  \\ \hline 
	\multicolumn{6}{c}{$n= 1000, p= 50$}    \\ \midrule 
	\multicolumn{1}{c|}{ExPen-CG} & -1.032617e+01 & 838.2 & 7.45e-04 & 1.64e-14 &  9.00  \\  
	\multicolumn{1}{c|}{PCAL} & -1.032438e+01 & 1698.3 & 9.94e-04 & 8.25e-15 &  9.66  \\  
	\multicolumn{1}{c|}{PenC} & -1.032442e+01 & 2127.4 & 9.88e-04 & 8.22e-15 &  11.58  \\  
	\multicolumn{1}{c|}{SLPG} & -1.032457e+01 & 1522.5 & 9.39e-04 & 8.13e-15 &  9.71  \\  
	\multicolumn{1}{c|}{RCG} & -1.032752e+01 & 2895.4 & 9.96e-04 & 3.10e-15 &  49.96  \\ \hline 
	\multicolumn{6}{c}{$n= 1000, p= 70$}    \\ \midrule 
	\multicolumn{1}{c|}{ExPen-CG} & -1.312272e+01 & 850.7 & 7.84e-04 & 2.02e-14 &  12.65  \\  
	\multicolumn{1}{c|}{PCAL} & -1.312096e+01 & 2131.2 & 9.96e-04 & 1.03e-14 &  14.91  \\  
	\multicolumn{1}{c|}{PenC} & -1.312099e+01 & 2492.5 & 9.97e-04 & 1.03e-14 &  16.79  \\  
	\multicolumn{1}{c|}{SLPG} & -1.312106e+01 & 1848.2 & 9.54e-04 & 1.01e-14 &  16.20  \\  
	\multicolumn{1}{c|}{RCG} & -1.312455e+01 & 3298.1 & 9.96e-04 & 3.74e-15 &  69.18  \\ \hline 
	\multicolumn{6}{c}{$n= 1000, p= 100$}    \\ \midrule 
	\multicolumn{1}{c|}{ExPen-CG} & -2.039827e+01 & 1169.3 & 7.91e-04 & 2.69e-14 &  22.14  \\  
	\multicolumn{1}{c|}{PCAL} & -2.039602e+01 & 2413.2 & 9.97e-04 & 1.31e-14 &  24.27  \\  
	\multicolumn{1}{c|}{PenC} & -2.039603e+01 & 2838.3 & 9.94e-04 & 1.31e-14 &  27.86  \\  
	\multicolumn{1}{c|}{SLPG} & -2.039626e+01 & 2289.9 & 9.43e-04 & 1.29e-14 &  24.60  \\  
	\multicolumn{1}{c|}{RCG} & -2.040059e+01 & 4261.6 & 9.96e-04 & 4.85e-15 &  128.97  \\ \hline 
	\bottomrule[.4mm] 
	
	\caption{The results of the Brockett function minimization problems with varying $p$.}
	\label{Table_broc_p}
\end{longtable}

\section{Conclusion}
The optimization over the Stiefel manifold has a close connection with unconstrained optimization. To efficiently extend existing unconstrained optimization approaches to their Stiefel versions and establish the corresponding theoretical analysis, 
most existing approaches are mainly based on the frameworks summarized in \cite{Absil2009optimization}.
These approaches always involve computing the retractions and parallel/vector transports. However, computing retractions or parallel transport on the Stiefel manifold lack
efficiency or scalability, while computing the vector transport can hardly inherit nice techniques in theoretical analysis.

In this paper, we present a novel exact smooth penalty function and its corresponding penalty model \ref{Prob_Pen} for \ref{Prob_Ori}. We show that \ref{Prob_Pen} is well-defined under mild assumptions and study its theoretical properties. As illustrated in Figure \ref{Fig_roadmap_FOSP} and Figure \ref{Fig_roadmap_SOSP}, we have proved the first-order and second-order relationships between \ref{Prob_Ori} and \ref{Prob_Pen}, respectively. These properties guarantee that \ref{Prob_Pen} and \ref{Prob_Ori} share first-order or second-order stationary points or local minimizers with a sufficiently large given penalty parameter.

In conclusion, we can directly adopt unconstrained optimization approaches to solve \ref{Prob_Ori} through the bridge built by \ref{Prob_Pen}. Meanwhile, we can easily inherit the nice convergence properties of those approaches. We use the nonlinear conjugate gradient method as an instance. We present its \ref{Prob_Pen} version and establish its global convergence, worst-case complexity and the ability to escaping the saddle point. It is worth mentioning that this \ref{Prob_Pen} version is performed in Euclidean space and hence avoids computing the retractions or parallel transports on the Stiefel manifold. Our present example highlights that those progress in nonconvex unconstrained optimization will immediately benefit optimization over the Stiefel manifold through \ref{Prob_Pen}. Moreover, the presented numerical examples further address the great potential of \ref{Prob_Pen}.

	\bibliography{ref}
	\bibliographystyle{plainnat}
	
\end{document}